\definecolor{orange}{RGB}{255,127,0}
\definecolor{green2}{RGB}{0,153,0}
\renewcommand{\hat}{\widehat}
\newcommand{\PP}{{\mathbb P}}
\newcommand{\QQ}{{\mathbb Q}}
\newcommand{\MM}{{\mathbb M}}
\newcommand{\RR}{\mathbb{R}}
\newcommand{\NN}{\mathbb{N}}
\newcommand{\eps}{\varepsilon}
\newcommand{\mc}[1]{\mathcal {#1}}
\DeclareMathOperator{\argmin}{argmin}
\newtheorem{theorem}{Theorem}[section]
\newtheorem{lemma}[theorem]{Lemma}
\newtheorem{prop}[theorem]{Proposition}
\newtheorem{cor}[theorem]{Corollary}
\newtheorem{rem}[theorem]{Remark}
\begin{document}

\begin{frontmatter}
\title{Least squares estimation in the monotone single index model}
\runtitle{LSE in the monotone single index model}

\begin{aug}
\author{\fnms{Fadoua} \snm{Balabdaoui}\thanksref{a,b}\ead[label=e1]{fadoua@ceremade.dauphine.fr}},
\author{\fnms{C\'ecile} \snm{Durot}\thanksref{c}\ead[label=e2]{cecile.durot@gmail.com}} 
\and
\author{\fnms{Hanna} \snm{Jankowski}\thanksref{d}\ead[label=e3]{hkj@yorku.ca}}
\runauthor{Balabdaoui, F., Durot, C. and Jankowski, H.}

\affiliation{Universit\'e Paris Dauphine and ETH, Universit\'e Paris Nanterre, and York University}

\address[a]{Universit\'e Paris-Dauphine,
PSL Research University,
CNRS, CEREMADE,
 75016 Paris, France,
\printead{e1}}

\address[b]{Seminar f\"ur Statistik,
 ETH Z\"urich,
 8092, Z\"urich, Schweiz}

\address[c]{Modal'x, UPL, Univ Paris Nanterre,
F92000 Nanterre France,
\printead{e2}}

\address[d]{Department of Mathematics and Statistics,
 York University,
 4700 Keele Street,
Toronto ON,  M3J 1P3, Canada
\printead{e3}}

\end{aug}

\begin{abstract}. 
We study the monotone single index model where a real response variable $Y$ is linked to a $d$-dimensional covariate $X$ through the relationship $E[Y | X] = \Psi_0(\alpha^T_0 X),$ almost surely. Both the ridge function, $\Psi_0$, and  the index parameter, $\alpha_0$, are unknown and the ridge function is assumed to be monotone. Under some appropriate conditions, we  show that the rate of convergence in the $L_2$-norm for the least squares estimator of the bundled function  $\Psi_0({\alpha}^T_0 \cdot)$ is $n^{1/3}$.  A similar result is established for the isolated ridge function and for the index.   Furthermore, we show that the least squares estimator is nearly parametrically rate-adaptive to piecewise constant ridge functions. Since the least squares estimator of the index is computationally intensive, we also consider alternative estimators of the index $\alpha_0$ from earlier literature.  Moreover, we show that if the rate of convergence of such an alternative estimator is at least $n^{1/3}$, then the corresponding least-squares type estimators (obtained via a ``plug-in" approach) of both the bundled and isolated ridge functions still converge at the rate $n^{1/3}$.
\end{abstract}

\begin{keyword}[class=AMS]
\kwd[Primary ]{62G08}
\kwd{62G20}
\kwd[; secondary ]{62J12}
\end{keyword}

\begin{keyword}
\kwd{least squares}
\kwd{maximum likelihood}
\kwd{monotone}
\kwd{semi-parametric}
\kwd{shape-constraints}
\kwd{single-index model}
\end{keyword}

\end{frontmatter}

\section{Introduction}\label{sec: intro}
\subsection{The generalized linear model and the single index model}
The generalized linear model is widely used  in econometrics and biometrics as a standard tool in parametric regression analysis, see {\it e.g.} \cite{dobson2008introduction}. It assumes that 
%
the observations are $n$ i.i.d. copies of a pair $(X,Y)$ such that 
\begin{equation}\label{linkPsi}
E(Y|X)=\Psi_{0}(\alpha_{0}^TX)
\end{equation} 
almost surely with an unknown index $\alpha_{0}\in\RR^d\backslash\{0\}
$ and a monotone ridge function $\Psi_0$. In the generalized linear model, $\Psi_0$ is assumed to be known and the conditional density of $Y$ given $X=x$ with respect to a given dominating measure (typically either  Lebesgue measure  or counting measure) is assumed to be of the form
\begin{equation}\label{eq: EF}
y\mapsto h(y,\phi)\exp\left\{\frac{y\ell (\mu(x))-B\circ \ell(\mu(x))}{\phi}\right\}
\end{equation}
where $h$ is the normalizing function, $\mu(x)$ is the mean, $\phi>0$ is a possibly unknown dispersion parameter,  $\ell $ is a given real valued function with first derivative $\ell '>0$, and inverse $\ell^{-1}=B'$.  The generalized linear model includes very popular parametric regression models but nevertheless,  it lacks the flexibility offered by more robust approaches.

The single index model extends the generalized linear model in order to gain more flexibility. It is widely used, for instance, in econometrics, as a compromise between restrictive parametric assumptions and a fully non-parametric setting that can suffer from the ``curse of dimensionality" in high-dimensional problems. 
It assumes that the conditional expectation of $Y$ depends only on the linear predictor $\alpha_0^TX$. Hence, as in the generalized linear model,  we have $E(Y|X)=\Psi_{0}(\alpha_{0}^TX)$ almost surely, however, the ridge function $\Psi_0$ is now unknown. Furthermore,  it is no longer assumed that the conditional distribution of $Y$ given $X$ takes the form \eqref{eq: EF}, making the  model even more flexible.

Standard methods for estimating $\alpha_0$ and $\Psi_0$ rely on smoothness assumptions on $\Psi_0$,  and hence involve a smoothing parameter which has to be carefully chosen, see {\it e.g.} \cite{hardle1993optimal}, \cite{chiou2004quasi},  \cite{hristache2001direct} and references therein. Note also that $\alpha_0$ and $\Psi_0$ are not identifiable if left unrestricted.  To see this, let $\|\alpha_0\|$ denote the Euclidean norm of $\alpha_0$, and note that $\Psi_0(\alpha_0^Tx)=\Phi_0(\beta_0^Tx)$ if $\beta_0=\alpha_0/\|\alpha_0\|$ and $\Phi_0(t)=\Psi_0(\|\alpha_0\| t)$ for all~$t$.  Similarly,  $\Psi_0(\alpha_0^Tx)=\Phi_0(\beta_0^Tx)$ if $\beta_0=-\alpha_0$ and $\Phi_0(t)=\Psi_0(-t)$ for all~$t$.
This issue could be resolved by assuming, e.g.,  that $\|\alpha_0\|=1$ and the first non-null entry of $\alpha_0$ is positive. Under some additional constraints on $\Psi_0$ and the distribution of $X$, the model can be shown to be identifiable.

\subsection{The monotone single index model}

In this paper, we assume that the unknown ridge function  in the single index model is monotone. This is motivated by the fact that  monotonicity appears naturally in various applications, which is one of the reasons behind the popularity of the generalized linear model. Moreover, the monotonicity assumption has a great advantage.  Estimators based only on smoothness conditions on the ridge function typically depend on a tuning parameter that has to be chosen by the practitioner. The monotonicity assumption avoids all this by opening the door to non-parametric estimators which are completely data driven, and do not involve any tuning parameters. 
To be precise, we assume \eqref{linkPsi} where $\alpha_0\in \mathbb R^d\backslash\{0\}$ is such that $\|\alpha_0\|=1$, and $\Psi_0$ is assumed to be  {\it non-decreasing}.  Note that the assumption made on the direction of monotonicity of  the ridge function replaces the assumption that the first non-null entry of $\alpha_0$ is positive in the identifiability conditions. This can be seen by defining  the function $\Phi_0(t)=\Psi_0(-t)$ for $t \in \RR$, which is non-increasing if and only if $\Psi_0$ is non-decreasing. Throughout this paper, we will refer to this model as the monotone single index model, a term that has been used previously in the literature.

The monotone single index model, with the additional assumption that $Y-E(Y|X)$ is independent of $X$, has been considered by \cite{foster2013variable}, where an estimator for $\alpha_0$ was proposed based on combining isotonic regression with a smoothing  method (which involves a tuning parameter), and also by \cite{kakade2011efficient}, where an algorithm for simultaneously  estimating the index and the ridge function is provided under the assumption that the ridge function is Lipschitz (the Lipschitz constant is a parameter of the algorithm).  This fits in the setting of \cite{han1987non} with (using the notation of that paper) $F(x,u)=f(x)+u$ and $D(t)=t$ with $f$ a monotone function. 
\cite{han1987non} proves consistency of a non-parametric estimator of the index, which does not require a tuning parameter.
The monotone single index model is also closely related to the model considered by \cite{chen2014}, who in contrast to the approach followed here, assume that the conditional distribution of $Y$ given $X$ takes the form \eqref{eq: EF}. \citet{chen2014} also consider  additive index models where, 
with $\ell$ as in \eqref{eq: EF}, $\ell(E(Y|X))$ can be written as a sum of ridge functions of  linear predictors, with each ridge function satisfying a certain shape constraint. The  authors show consistency for a slightly modified maximum likelihood estimator obtained by maximizing the likelihood over  the closure of the set of all possible parameters.

Current status regression can also be seen as a special case of the monotone single index model.  In the current status regression setting, the response $Y\geq 0$ is subjected to interval censoring and is not completely observed.  Instead, independent copies of  $(X,C,\Delta)$ are observed, where
$X\in\mathbb R^{d}$ is the predictor, $C\ge 0$ is an observed censoring time independent of $Y$, 
and $\Delta=1_{\{Y\le C\}}.$ 
Although not observed, $Y$ is assumed to satisfy the linear regression model
$Y=\alpha_0^TX+\varepsilon,$
where $\alpha_0\in\mathbb R^{d}$ and $\varepsilon$ is independent of $(C,X)$ with unknown distribution function~$F_0$.  Let $\tilde X$ denote  the random vector in $\mathbb R^{d+1}$  such that $\tilde{X}^T = (C, X^T)$,    $\tilde\alpha_0$ the vector in $\mathbb R^{d+1}$ such that $\tilde{\alpha}^T_0  = (1, -\alpha_0^T)$  and $\tilde Y=\Delta.$  Then, 
$\displaystyle E(\tilde Y|\tilde X)=F_0(\tilde \alpha_0^T\tilde X)$ where $F_0$ is non-decreasing (since it is a distribution function).   Here, the conditional distribution of $\tilde Y$ given $\tilde X$ is Bernoulli, with  $\ell (\mu)=\log(\mu/(1-\mu))$ for $\mu\in(0,1)$ in \eqref{eq: EF}. 
Note that the particular case where the censoring time $C\equiv 0$ has been widely used in econometrics and is usually referred to as the binary choice model. The maximum likelihood estimator (MLE) of $\alpha_0$ was proved to be consistent by \cite{cosslett1983distribution}, and \cite{murphy1999current} prove that the rate of convergence is $O(n^{1/3})$ in the one-dimensional case (that is, when $d=1$). The latter also shows that an appropriately penalized MLE is $\sqrt n$-consistent, but the considered estimator is difficult to implement. \cite{groeneboom2016current} consider several alternative $\sqrt n$-consistent estimators based on a truncated likelihood,  where the truncation parameter has to be chosen by the practitioner.

\subsection{Contents of the paper}

In the monotone single index model, we consider the least squares estimator (LSE) which estimates both the index and the ridge function without the use of a tuning parameter.  We give a characterization of the LSE of $(\Psi_0,\alpha_0)$ under the monotonicity constraint using a profile approach.  Furthermore, letting $g_0(x)=E(Y|X=x)=\Psi_0(\alpha_0^Tx)$, we prove that, under appropriate conditions, the LSE of $g_0$ converges at an $n^{1/3}$-rate in the $L_2$-norm. Then, we consider the LSE of $\alpha_0$ and $\Psi_0$ separately, and also prove their $n^{1/3}$-consistency. The $n^{1/3}$-rate of convergence obtained for the index may be due to our strategy of proof, as we derive this rate from the $n^{1/3}$-rate of the LSE of $g_0$. Thus, sharper rates could potentially be obtained using alternative methods. This is however out of the scope of this work. 

We also study the adaptive properties of our least squares approach.
To our best knowledge, the first adaptive results for the Grenander estimator appear in \citet{pietpyke}, where the asymptotic distribution is obtained exactly.   Later, \citet{sara93} showed 
that the MLE of a decreasing density converges at rate $n^{-1/2} (\log n)^{1/2}$ in Hellinger distance when the true density is uniform. Similar (nearly parametric) results for shape-constrained estimators can be found in 
\citet{bodhi_adapt, royjon_adapt, AEadd1, AEadd2, AEadd3}. Following \citet{bodhi_adapt}, we study the global convergence rate when the truth $\Psi_0$ is piecewise constant.   In Theorem~\ref{thm:adapt_rate2v2}, we show that the rate of convergence to $g_0$ in this case is nearly parametric.

The least squares estimator of the index $\alpha_0$ is computationally intensive, and hence inefficient to compute exactly. Therefore, we also consider alternative estimators of the index taken from earlier literature.   Among them, the so-called  linear estimator, due to \citet{brillinger83}, is especially appealing since it is very easy to implement and converges at the $\sqrt n$-rate to the true index under appropriate conditions.  We then consider ``plug-in" estimators of $g_0:$ we first estimate the index using the first $pn$ data points for some fixed $p\in(0,1)$, then plug the obtained estimator $\tilde\alpha_n$ in the least squares criterion and finally minimize the criterion based on the remaining $(1-p)n$ data points over the space of monotone ridge functions. See Section~\ref{sec: plug} for details.  Combining these two estimators gives an estimator of $g_0,$ and we show that if the rate of convergence of $\widetilde \alpha_n$ is sufficiently fast, then the corresponding estimators of $g_0$ and $\Psi_0$ converge at the $n^{1/3}$-rate.

The paper is organized as follows.  In Section \ref{sec: LSE}, we show existence of the LSE  of $(\Psi_0, \alpha_0)$ and give its characterization.  
Section~\ref{sec: alternative} is devoted to the description of the plug-in approach based on alternative estimators for the index, as well as to the description of such alternative estimators. 
Our main result is given in Theorem \ref{theo: rateL2} in Section~\ref{sec: rate}, where we  establish the $n^{1/3}$-convergence rate of the LSE of $g_0$. Section \ref{sec: rate} also gives the adaptive properties of the LSE. In Section~\ref{sec: separate}, we show under some specified assumptions that  the LSE of $\alpha_0$ and $\Psi_0$  converge separately  at the same rate in the Euclidean norm on $\RR^d$ and the $L_2$-norm on the set of real valued functions respectively, provided that we restrict integration to a bounded subset of the domain of $\Psi_0$. 
 Section~\ref{sec: cvalternative} studies the rate of convergence of the above-mentioned plug-in estimators.   The proof of Theorem \ref{theo: rateL2}  is given  in Section~\ref{sec: proofs}.  Other proofs are deferred to the Supplementary Material.

\section{Existence and characterization of the least squares estimator}\label{sec: LSE}

Assume that we observe an i.i.d. sample $(X_1,Y_1),\dots,(X_n,Y_n)$ from $(X,Y)$ such that
$E(Y|X)=\Psi_{0}(\alpha_{0}^TX)$
almost surely, where both the index $\alpha_0$ and the monotone ridge function $\Psi_0$ are unknown.  To ensure model identifiability (see Section \ref{sec: intro}), $\alpha_0$ is assumed to belong to the $d$-dimensional unit sphere, $\mathcal{S}_{d-1}$ {and the} ridge function $\Psi_0$ is assumed to be non-decreasing on its domain, which contains the range of the linear predictor $\alpha_0^TX$.  
For technical reasons, in what follows we will extend all functions outside their actual support by taking the extension to be constant to the left and right of the endpoints of the original support.

The goal is to find the LSE of $(\Psi_{0},\alpha_{0})$, the minimizer of the  least-squares criterion 
$$h_n(\Psi, \alpha)=\sum_{i=1}^n\left \{ Y_{i} - \Psi(\alpha^T X_i) \right \}^2$$
over $\mathcal M \times \mathcal S_{d-1}$, where $\cal M$ is the class of all non-decreasing functions on $\mathbb R$.  Using a profile least-squares approach, we first minimize $\Psi \mapsto h_n(\Psi, \alpha)$ over $\mathcal M$ for a fixed $\alpha$, and then minimize over $\alpha$.
All proofs for Section \ref{sec: LSE} are given in Section \ref{sec: proofsSection2} of the Supplement.  

\begin{theorem}\label{theo: caractLSE}
For any $\alpha\in\RR^d$, the minimum of $\Psi \mapsto h_n(\Psi, \alpha)$ over $\cal M$ is achieved. 
The minimizer is not unique; it is uniquely defined at the points $\alpha^TX_i$, $i=1,\dots,n$. 
\end{theorem}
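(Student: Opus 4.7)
\medskip
\noindent\textbf{Proof plan.}
Fix $\alpha\in\RR^d$ and set $Z_i=\alpha^TX_i$ for $i=1,\dots,n$. The key observation is that $h_n(\Psi,\alpha)=\sum_{i=1}^n (Y_i-\Psi(Z_i))^2$ depends on $\Psi\in\mathcal M$ only through the $n$-tuple $(\Psi(Z_1),\dots,\Psi(Z_n))$. Moreover, if $Z_i=Z_j$ then necessarily $\Psi(Z_i)=\Psi(Z_j)$, and if $Z_i<Z_j$ then the monotonicity of $\Psi$ forces $\Psi(Z_i)\le\Psi(Z_j)$. Hence the minimization of $\Psi\mapsto h_n(\Psi,\alpha)$ over $\mathcal M$ reduces to a finite-dimensional isotonic regression problem.

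More precisely, let $Z_{(1)}<\cdots<Z_{(k)}$ denote the distinct values among $Z_1,\dots,Z_n$ (so $k\le n$), and for each $j\in\{1,\dots,k\}$ let $I_j=\{i:Z_i=Z_{(j)}\}$. Writing $\psi_j$ for the common value $\Psi(Z_{(j)})$, the problem becomes
\[
\min\Bigl\{\,\sum_{j=1}^k\sum_{i\in I_j}(Y_i-\psi_j)^2 \,:\, \psi_1\le\cdots\le\psi_k\,\Bigr\}.
\]
This is the minimization of a strictly convex, coercive quadratic function (each $\psi_j$ appears in at least one squared term since $I_j\ne\emptyset$) over the closed convex cone $\{\psi_1\le\cdots\le\psi_k\}$ in $\RR^k$. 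I would then invoke the standard existence and uniqueness result for isotonic regression (which can also be proved directly by the pool-adjacent-violators algorithm, or via the classical min-max formula $\hat\psi_j^\ast=\max_{r\le j}\min_{s\ge j}\bar Y_{[r,s]}$ where $\bar Y_{[r,s]}$ is the weighted mean of the $Y_i$'s with $i\in I_r\cup\cdots\cup I_s$): there is a unique minimizer $(\hat\psi_1^\ast,\dots,\hat\psi_k^\ast)\in\RR^k$.

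To produce an element of $\mathcal M$ achieving the infimum, I would then define $\hat\Psi$ to be, for instance, the left-continuous step function taking value $\hat\psi_j^\ast$ on the interval $(Z_{(j-1)},Z_{(j)}]$ (with the obvious extensions to $(-\infty,Z_{(1)}]$ and $(Z_{(k)},\infty)$, in accordance with the paper's convention of constant extension outside the support). Then $\hat\Psi\in\mathcal M$, it attains the values $\hat\psi_j^\ast$ at $Z_{(j)}$, and hence minimizes $h_n(\cdot,\alpha)$. Uniqueness of the values at $Z_1,\dots,Z_n$ follows from the uniqueness of $(\hat\psi_1^\ast,\dots,\hat\psi_k^\ast)$ above.

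Finally, for the non-uniqueness statement, I would exhibit two different minimizers: the left-continuous step function $\hat\Psi$ above and the right-continuous one $\hat\Psi^{R}$ taking value $\hat\psi_j^\ast$ on $[Z_{(j)},Z_{(j+1)})$. Both lie in $\mathcal M$ and agree on $\{Z_{(1)},\dots,Z_{(k)}\}$, so they yield the same value of $h_n(\cdot,\alpha)$, but they differ on the open intervals between consecutive $Z_{(j)}$ as soon as some $\hat\psi_{j+1}^\ast>\hat\psi_j^\ast$ (and if all $\hat\psi_j^\ast$'s coincide, non-uniqueness can be obtained by perturbing outside $[Z_{(1)},Z_{(k)}]$ with any nondecreasing function that is constant on $[Z_{(1)},Z_{(k)}]$). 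There is no real obstacle here; the only minor technicality is handling ties among the $Z_i$'s, which is why I pass to the distinct values $Z_{(j)}$.
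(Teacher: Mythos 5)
Your proof is correct and follows essentially the same route as the paper: both reduce the minimization over $\mathcal M$ to a weighted isotonic regression on the distinct values of $\alpha^T X_i$ (the paper rewrites $h_n$ in terms of the within-tie averages $t_k$ and invokes Theorem 1.1 of \cite{barlowstatistical} for existence and uniqueness of the ordered vector, where you argue via strict convexity and coercivity on the monotone cone). Your explicit construction of left-/right-continuous step-function minimizers and the remark on perturbing outside the data range only make explicit the non-uniqueness that the paper leaves implicit.
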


Next, we search for $\hat\alpha_n$ that minimizes 
\begin{eqnarray}\label{line:alg2D}
\hat h_n(\alpha):=\min_{\Psi\in{\cal M}}h_n(\Psi, \alpha)
\end{eqnarray}
over $\alpha \in{\cal S}_{d-1}$. The following proposition shows that the minimum is attained on $ {\cal S}^X$, the set of all $\alpha \in{\cal S}_{d-1}$ which  satisfy  $\alpha^TX_i\neq\alpha^TX_j$ for all $i\neq j$ such that $X_i\neq X_j$. This will prove very helpful to provide a characterization of the LSE, see Theorem \ref{theo: caractLSE2} below.

\begin{prop}\label{prop: alphaexists}
{The infimum of $\hat h_n$ over ${\cal S}_{d-1}$ is achieved on $ {\cal S}^X$ and the minimizer is not unique.}
\end{prop}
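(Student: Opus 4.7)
The plan is to exploit the fact that $\hat h_n$ is, in essence, a combinatorial object: by Theorem \ref{theo: caractLSE}, for each $\alpha$ the value $\hat h_n(\alpha)$ equals the least-squares isotonic regression of $(Y_i)$ against the ordered scores $\{\alpha^TX_i\}$, so it depends on $\alpha$ only through the ordered partition of $\{1,\ldots,n\}$ induced by those scores. Since there are only finitely many ordered partitions of $n$ elements, $\hat h_n$ takes finitely many distinct values on ${\cal S}_{d-1}$, and consequently its infimum is trivially attained.

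The next step is to show the infimum can be realized on ${\cal S}^X$. Take any minimizer $\alpha^*$ that exhibits at least one tie $\alpha^{*T}X_i=\alpha^{*T}X_j$ with $X_i\neq X_j$, and pick a direction $v\in\RR^d$ satisfying $v^T(X_i-X_j)\neq 0$ for every such tied pair. Only finitely many hyperplanes in $\RR^d$ are excluded, so such $v$ exists as soon as $d\geq 2$. Set $\alpha_\varepsilon=(\alpha^*+\varepsilon v)/\|\alpha^*+\varepsilon v\|$. For $\varepsilon>0$ small enough, every strict inequality among the scores $\{\alpha^{*T}X_i\}$ is preserved by continuity, while every tie is broken by construction, so $\alpha_\varepsilon\in {\cal S}^X$ and its induced strict total ordering is a refinement of the ordered partition produced by $\alpha^*$. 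The crucial inequality is
\[
\hat h_n(\alpha_\varepsilon)\;\le\;\hat h_n(\alpha^*),
\]
which I would establish by rewriting both sides as the minimum of $c\mapsto \sum_{i=1}^n (Y_i-c_i)^2$ over the isotonic cone $C_\alpha=\{c\in\RR^n: c_i\le c_j \text{ whenever } \alpha^TX_i\le \alpha^TX_j\}$, and then checking directly the set inclusion $C_{\alpha^*}\subset C_{\alpha_\varepsilon}$: an equality $c_i=c_j$ forced by a tie in $\alpha^*$ (because both $c_i\le c_j$ and $c_j\le c_i$ are required) relaxes, under $\alpha_\varepsilon$, to the single inequality corresponding to the chosen refinement. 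Hence $\alpha_\varepsilon$ is also a minimizer and lies in ${\cal S}^X$.

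Non-uniqueness follows from an openness observation. Once $\hat\alpha\in {\cal S}^X$ is a minimizer, the set of $\alpha\in {\cal S}_{d-1}$ inducing exactly the same strict total ordering of $\{\alpha^TX_i\}$ is the intersection of ${\cal S}_{d-1}$ with an open cone defined by finitely many strict linear inequalities, hence an open neighborhood of $\hat\alpha$ in ${\cal S}_{d-1}$, and on it $\hat h_n$ is constant by the combinatorial argument above. For $d\geq 2$, this neighborhood is uncountable, so the minimizer is not unique.

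The main technical obstacle, in my view, is the cone inclusion $C_{\alpha^*}\subset C_{\alpha_\varepsilon}$ together with its statistical interpretation: one must verify that passing from $\alpha^*$ to a refining $\alpha_\varepsilon$ enlarges the feasible set of fitted vectors, rather than merely relabelling them. All the remaining pieces -- finiteness of the value set, generic choice of perturbation direction, and openness of the ordering region -- are essentially bookkeeping once this inclusion is in hand.
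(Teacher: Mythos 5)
Your proof is correct and takes essentially the same route as the paper: the key inequality $\hat h_n(\alpha_\varepsilon)\le\hat h_n(\alpha^*)$, which you obtain from the cone inclusion $C_{\alpha^*}\subset C_{\alpha_\varepsilon}$, is exactly the paper's observation that the tied problem is the minimization over monotone sequences $\eta_1\le\dots\le\eta_m$ with the additional equality constraints $\eta_{l_1}=\dots=\eta_{l_2}$, i.e.\ a restricted version of the tie-free problem. The remaining ingredients (the value of $\hat h_n$ on $\mathcal{S}^X$ depends only on the induced ordering, there are finitely many orderings, and each ordering cell is open in $\mathcal{S}_{d-1}$) match the paper's argument for attainment and non-uniqueness.
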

Combining Theorem \ref{theo: caractLSE} and Proposition \ref{prop: alphaexists}, we prove existence and non-uniqueness  of the LSE. Some notation is needed before giving  a precise characterization of the LSEs.  The characterization uses the fact that (thanks to Proposition \ref{prop: alphaexists}) one can restrict attention to those $\alpha\in{\cal S}^X$ in the minimization process.
Let $x_1,\dots,x_m$  denote the distinct values of $X_1,\dots,X_n$, where $m\in\NN$ is random.  We define
\begin{equation}\label{eq: tilden}
\tilde n_k=\sum_{i=1}^n\mathbb{I}_{X_i=x_k}\mbox{ and }{\tilde y_k}=\frac{1}{\tilde n_k}\sum_{i=1}^nY_i \, \mathbb{I}_{X_i=x_k}
\end{equation}
for all $k=1,\dots,m$.   
Let ${\cal P}^X$ be the set of  all permutations (i.e. orderings) $\pi$ on $\{1,\dots,m\}$ such that there exists an $\alpha\in{\cal S}_{d-1}$  that linearly  induces $\pi$ in the sense that 
\begin{equation}\label{eq: inducepi}
\alpha^Tx_{\pi(1)}<\dots <\alpha^Tx_{\pi(m)}.
\end{equation} 
Note that for each $\alpha\in {\cal S}^X$, the $\alpha^Tx_k$'s are all different from each other and therefore, there exists a unique permutation $\pi$ on $\{1,\dots,m\}$ that is linearly induced by $\alpha$, i.e., that satisfies \eqref{eq: inducepi}.
Then, for each $\pi\in{\cal P}^X$, we denote by $d_1^\pi\leq \ldots\leq d_m^\pi$  the left derivatives of the greatest convex minorant  of the cumulative sum diagram defined by the set of points 
$$\left\{ (0,0), \left(\sum_{j=1}^k \tilde n_{\pi(j)}, \sum_{j=1}^k\tilde n_{\pi(j)} {\tilde y}_{\pi(j)}\right),\,  k =1, \ldots, m \right\}.$$


\begin{theorem}\label{theo: caractLSE2}
The infimum of $(\Psi,\alpha) \mapsto h_n(\Psi, \alpha)$ over ${\cal M}\times{\cal S}_{d-1}$ is achieved. Moreover, if  $(\hat\Psi_n,\hat\alpha_n)$  satisfies the following conditions, then it is a minimizer: 
 \begin{itemize}
\item  $\hat\alpha_n\in{\cal S}^X$ linearly induces  $\hat \pi_n$ that minimizes    $\pi\mapsto \tilde h_n(\pi):=\sum_{k=1}^m \tilde n_{\pi(k)} ({\tilde y}_{\pi(k)}-d_k^\pi)^2$ over ${\cal P}^X$, and
\item  $\hat\Psi_n$ is monotone non-decreasing with $\hat \Psi_{n}(\hat\alpha_n^T x_{\hat\pi_n(k)})=d_{k}^{\hat\pi_n}$.
\end{itemize}

\end{theorem}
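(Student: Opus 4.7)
The plan is to reduce the joint minimization over $(\Psi,\alpha) \in \mathcal{M} \times \mathcal{S}_{d-1}$ to a finite combinatorial optimization over $\mathcal{P}^X$, and then build explicit minimizers from the solution of the reduced problem. By Proposition \ref{prop: alphaexists}, attention may be restricted to $\alpha \in \mathcal{S}^X$, and for any such $\alpha$ the values $\alpha^T x_1, \ldots, \alpha^T x_m$ are pairwise distinct and thereby induce a unique permutation $\pi = \pi_\alpha \in \mathcal{P}^X$ via \eqref{eq: inducepi}.

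First, I would decompose the criterion by grouping residuals at each distinct covariate. Writing $a_k = \Psi(\alpha^T x_k)$ for $\Psi \in \mathcal{M}$ and $\alpha \in \mathcal{S}^X$,
\[
h_n(\Psi,\alpha) \;=\; \sum_{k=1}^m \sum_{i : X_i = x_k} (Y_i - a_k)^2 \;=\; C_n \;+\; \sum_{k=1}^m \tilde n_k\,(\tilde y_k - a_k)^2,
\]
where $C_n := \sum_{k=1}^m \sum_{i : X_i = x_k}(Y_i - \tilde y_k)^2$ depends on neither $\Psi$ nor $\alpha$. Since $\Psi$ is monotone non-decreasing and $\alpha$ induces $\pi$, the tuple $(a_k)_k$ must satisfy $a_{\pi(1)} \le \cdots \le a_{\pi(m)}$, and minimizing $\sum_k \tilde n_{\pi(k)} (\tilde y_{\pi(k)} - a_{\pi(k)})^2$ under this ordering constraint is precisely the classical weighted isotonic regression problem whose solution is the vector $(d_k^\pi)_{k=1}^m$ of left derivatives of the greatest convex minorant of the cumulative sum diagram. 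By Theorem~\ref{theo: caractLSE}, this choice uniquely determines an optimal $\Psi$ at the grid points $\alpha^T x_k$, so
\[
\min_{\Psi \in \mathcal{M}} h_n(\Psi,\alpha) \;=\; C_n + \tilde h_n(\pi_\alpha).
\]

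Combining this identity with Proposition~\ref{prop: alphaexists} yields
\[
\inf_{(\Psi,\alpha) \in \mathcal{M} \times \mathcal{S}_{d-1}} h_n(\Psi,\alpha) \;=\; C_n \;+\; \min_{\pi \in \mathcal{P}^X} \tilde h_n(\pi).
\]
Because $\mathcal{P}^X$ is a finite subset of the symmetric group on $\{1,\dots,m\}$, the right-hand minimum is attained by some $\hat\pi_n$. Selecting any $\hat\alpha_n \in \mathcal{S}^X$ that linearly induces $\hat\pi_n$ (one exists by the very definition of $\mathcal{P}^X$) and any non-decreasing $\hat\Psi_n$ satisfying $\hat\Psi_n(\hat\alpha_n^T x_{\hat\pi_n(k)}) = d_k^{\hat\pi_n}$ (e.g., the piecewise-constant right-continuous extension) produces a minimizer of $h_n$ over $\mathcal{M}\times \mathcal{S}_{d-1}$, which simultaneously establishes existence and the sufficiency of the two listed conditions.

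The one subtlety I anticipate is justifying the reparameterization of the profile criterion by permutations rather than by $\alpha$ itself: the isotonic values $d_k^\pi$ depend only on $\pi$ together with the weighted response vector $(\tilde n_{\pi(k)}, \tilde y_{\pi(k)})_k$, not on the specific inner products $\alpha^T x_{\pi(k)}$, so $\min_\Psi h_n(\Psi,\alpha)$ is constant on the equivalence class of $\alpha$'s inducing a given $\pi$. This same observation, combined with the freedom to extend $\hat\Psi_n$ off the grid $\{\hat\alpha_n^T x_k\}$ already noted in Theorem~\ref{theo: caractLSE}, transparently explains the non-uniqueness of the minimizer.
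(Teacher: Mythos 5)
Your proposal is correct and follows essentially the same route as the paper: decompose $h_n(\Psi,\alpha)$ into the $\alpha$-free constant $\sum_i Y_i^2-\sum_k \tilde n_k\tilde y_k^2$ (your $C_n$) plus a weighted isotonic regression problem whose solution for an $\alpha\in{\cal S}^X$ inducing $\pi$ is $(d_k^\pi)_k$, so that $\hat h_n(\alpha)=C_n+\tilde h_n(\pi_\alpha)$, and then invoke Proposition \ref{prop: alphaexists} and the finiteness of ${\cal P}^X$ to obtain existence and the sufficiency of the two conditions. The paper's proof is the same argument, citing Theorem 1.1 of \cite{barlowstatistical} for the isotonic step.
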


To compute a LSE, one can implement the following steps: (1) compute $\tilde n_k$ and $\tilde y_k$
for all $k=1,\dots,m$; (2)
compute
$d_1^\pi, \ldots, d_m^\pi$  for all $\pi$ in the finite set ${\cal P}^X$
using, for example, the pool adjacent violators algorithm  \citep[PAVA]{barlowstatistical}; 
(3) compute $\hat\pi_n$ that minimizes $\tilde h_n(\pi)$ over the finite set ${\cal P}^X$; (4) compute $\hat\alpha_n\in{\cal S}^X$ that linearly induces $\hat\pi_n$; (5)
compute $\hat\Psi_n\in\cal M$ such that $\hat\Psi_n(\hat\alpha_n^Tx_{\hat\pi_n(k)})=d_k^{\hat\pi_n}$ for all $k$ (one can consider for simplicity a piecewise constant function). 

The difficulty with the above line of implementation is that it requires that the set of all linearly inducible permutations ${\cal P}^X$ be computable (steps (2) and (3)).   Also, it requires that given a linearly inducible permutation, one can compute an index in ${\cal S}^X$ that induces the permutation (step (4)).  The cardinality of ${\cal P}^X$ is known to be on the order of  $m^{2(d-1)}$, see \cite{cover1967number}, but we are not aware of an efficient algorithm to implement (2)-(4). 

Therefore, instead of using inducible permutationss, one could use an alternative optimization algorithm; for example, stochastic search was used in \citet[Table 4, page 740]{chen2014}.  When adapted to our setting, the algorithm simplifies as follows: (1)  choose the total number $N$ of stochastic searches to perform and set $k=1$; (2) draw a standard Gaussian vector $Z_k$ in $\RR^d$ and compute $\alpha_k=Z_k/\Vert Z_k\Vert$; (3) compute the ordered distinct values $t_1<\dots<t_L$ of $\alpha_k^TX_i$, $i\in\{1,\dots,n\}$ and also
$$n_l=\sum_{i=1}^n \mathbb{I}_{\alpha_k^TX_i=t_l} \mbox{ and } y_l=\frac{1}{n_l}\sum_{i=1}^nY_i \mathbb{I}_{\alpha_k^TX_i=t_l}$$ for all $l=1,\dots,L$; (4)  compute $d_1\leq \ldots\leq d_L$,  the left derivatives of the greatest convex minorant  of the cumulative sum diagram defined by the set of points 
$$\left\{ (0,0), \left(\sum_{j=1}^l n_{j}, \sum_{j=1}^l n_jy_{j}\right),\,  l =1, \ldots, L \right\}$$
using the PAVA; (5) compute $A_k:=\sum_{l=1}^L n_{l} ({y}_{l}-d_l)^2$, set $k:=k+1$, go to (2) if $k\leq N$ and to (6) otherwise; (6) compute $\hat k$ that minimizes $A_k$ over $k\in\{1,\dots,N\}$. An approximated value of the LSE $(\hat \alpha_n,\hat\Psi_n)$ is then given by $(\alpha_{\hat k},\Psi_{\hat k})$, where using the same notation as in (3) and (4) where $k=\hat k$,  $\Psi_{\hat k}$ is piecewise constant function such that $\Psi_{\hat k}(t_l)=d_l$ for all $l=1,\dots,L$. Note that in the algorithm, the variables $Z_1,\dots, Z_N$ are drawn independently from each other.

 For completeness, in the Supplementary Material, we also give an algorithm to compute the LSE exactly for the special case when $d=2,$ see Section~\ref{sec:exact2D}.

\section{Alternative estimators}\label{sec: alternative}

Alternative estimators can be obtained by combining the above least squares approach with an alternative estimator of the index $\alpha_0$, as detailed in Section~\ref{sec: plug} below.  As can be seen from Section~\ref{sec: plug},  the main difficulty in computing the LSE in the monotone single index model lies in computing an estimator of the  unknown index $\alpha_0$. Hence, we consider below various estimators of $\alpha_0$ from earlier literature on single index models with a non-necessarily monotone ridge function.  For notational convenience, all the considered estimators are denoted by $\tilde\alpha_n$. Among the considered estimators,  the linear estimator of Section~\ref{sec:linear_est} is of particular interest since it
is very easy to compute and converges at the $\sqrt n$-rate in the monotone single index model,  see Theorem \ref{MainTheo} below.

\subsection{Plug-in estimators} \label{sec: plug}
First, randomly split the sample  into two independent sub-samples of respective sizes $n_1$ and $n_2$, where $n_1$ is the integer part of $pn$ for some fixed $p\in(0,1)$ and $n_2=(1-p)n$.  Let $\tilde\alpha_n$ denote some appropriate estimator  of the true index $\alpha_0$ using the $n_1$ data points in the first sub-sample.   Next, we consider the ``plug-in" estimator $\widetilde\Psi_n:=\widehat\Psi^{\tilde\alpha_n}$
of $\Psi_0$, where for all $\alpha$, $\widehat\Psi^{\alpha}$ is the  minimizer of 
\begin{equation}\label{eq: htilde}
\Psi\mapsto \sum_{i \in I_2} \left\{Y_i - \Psi({\alpha}^T  X_i)\right\}^2
\end{equation} over $\Psi \in \mathcal{M}$, where {$\{(X_i,Y_i),i\in I_2\}$} are the observations from the second sub-sample.  Once $\tilde\alpha_n$ is given, the estimator $\widetilde\Psi_n$ is easy to compute using again the PAVA. 
Indeed,   it follows from  \citet[Theorem 1]{barlowstatistical} that 
any $\widetilde\Psi_n\in\cal M$ such that $\widetilde\Psi_n(Z_k)=d_k$ is a minimizer. Here, 
$Z_1<\dots<Z_m$ denote the ordered distinct values of $\tilde \alpha_n^TX_i,i \in I_2$, and 
$d_1\leq \ldots\leq d_m$ are the left derivatives of the greatest convex minorant  of the cumulative sum diagram defined by the set of points 
$$\left\{ (0,0), \left(\sum_{i \in I_2}\mathbb{I}_{\tilde \alpha_n^T X_i\le Z_k}, \sum_{i \in I_2}Y_i\mathbb{I}_{\tilde \alpha_n^T X_i\le Z_k}\right), \,  k =1, \ldots, m \right\}.$$
Below, we consider several estimators $\tilde\alpha_n$ that could be used in this plug-in procedure.

\subsection{The linear estimator}\label{sec:linear_est}

The linear estimator 
goes back to \cite{brillinger83}, who also considered a single index model \eqref{linkPsi} with an unknown, not necessarily monotone ridge function $\Psi_0.$ 
This estimator is exactly what one would use if the regression model were known to be linear. To be precise, based on observations $(X_1,Y_1)$, \dots, $(X_n,Y_n)$ where the $Y_i$s take real values whereas the $X_i$s take values in $\RR^d$, the linear estimator of $\alpha_0$ is defined as follows:
$\tilde\alpha_n=\hat\alpha_n/\|\hat\alpha_n\|$ where here,
\begin{eqnarray}\label{alphan}
\widehat{\alpha}_n = \argmin_{\alpha \in \RR^d }    \sum_{i=1}^n (Y_i  -  \alpha^T ( {X}_i-\bar X_n))^2
\end{eqnarray} 
with  $\bar{X}_n  = n^{-1} \sum_{i=1}^n X_i$.  The linear estimator can therefore be easily computed using standard tools from linear regression. Moreover, it typically converges to the true index $\alpha_0$ at the square-root rate and is asymptotically Gaussian, even if the linearity assumption is not valid.  
Typical assumptions required for these results to hold are that the variables $\Psi_0(\alpha_0^TX)$ and $\alpha_0^TX$ are correlated, and that the conditional expectation of $X$ given $\alpha_0^TX$ is a linear function of $\alpha_0^TX$. The latter condition is met under elliptic symmetry of $X$ (which holds in particular if $X$ is Gaussian, see \cite{Review}), a condition that has been considered for instance by \cite{li1989regression} and
\cite{goldstein2016structured}. It turns out that the condition $Cov(\Psi_0(\alpha_0^TX),\alpha_0^TX)\neq0$ is met in our setting where $\Psi_0$ is monotone and not constant, whence the linear estimator is $\sqrt n$-consistent and asymptotically Gaussian.  The precise statement is given in the following theorem, which is a  close variant to earlier results in the literature on linear estimators. Here, the distribution of $X$ is assumed to be continuous since $\alpha_0$ is not identifiable under a discrete distribution of $X$. The assumption on boundedness of $\Psi_0$ ensures existence of the above covariance. For completeness, the proof
is provided in {Section \ref{sec:ProofMainTheo}} of the Supplementary Material.


\begin{theorem}\label{MainTheo}
Let $(X_1,Y_1), \dots, (X_n,Y_n)$  be an i.i.d. sample from $(X,Y)$ such that $E(Y|X)=\Psi_0(\alpha_0^TX)$ almost surely where $\alpha_0\in{\cal S}_{d-1}$ and $\Psi_0$ is bounded and non-decreasing  such that  there exists a nonempty interval $[a,b]$ in the domain of $\alpha_0^TX$ on which $\Psi_0$  is strictly increasing. Suppose furthermore that $X $ has a continuous elliptically symmetric distribution with finite mean $\mu\in \RR^d$ with a positive definite $d \times d$ covariance matrix $\Sigma$, and  $E(\Vert X\Vert^2Y^2)<\infty$. Then
$\sqrt{n}  \left (\tilde{\alpha}_n - \alpha_0\right) $ converges weakly to a centered $d$-dimensional Gaussian distribution.
\end{theorem}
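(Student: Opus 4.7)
The plan is to recognize $\widehat\alpha_n$ as an ordinary least squares (OLS) estimator, apply standard multivariate CLT arguments to obtain a joint limit, and then conclude via the delta method after normalization.

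First, I would write $\widehat\alpha_n$ in closed form. Differentiating \eqref{alphan} gives $\widehat\alpha_n = S_n^{-1}\bar V_n$, where $S_n = n^{-1}\sum_{i=1}^n (X_i-\bar X_n)(X_i-\bar X_n)^T$ and $\bar V_n = n^{-1}\sum_{i=1}^n (X_i-\bar X_n)Y_i$. By the law of large numbers, $S_n \to \Sigma$ almost surely, and $\bar V_n \to v_0 := \mathrm{Cov}(X,Y) = E[(X-\mu)\Psi_0(\alpha_0^T X)]$ almost surely (this last equality uses $E(Y|X)=\Psi_0(\alpha_0^T X)$ and $E\|X\|^2 Y^2 < \infty$, which controls $E\|X\|\,|Y|$).

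Next, I would use elliptic symmetry to pin down the direction of $v_0$. For an elliptically symmetric $X$ with mean $\mu$ and positive definite covariance $\Sigma$, the conditional mean satisfies $E[X-\mu \mid \alpha_0^T X] = (\Sigma\alpha_0/(\alpha_0^T\Sigma\alpha_0))(\alpha_0^T X - \alpha_0^T\mu)$. Conditioning and using $\|\alpha_0\|=1$,
\begin{equation*}
v_0 \;=\; E\bigl[E[X-\mu\mid \alpha_0^T X]\,\Psi_0(\alpha_0^T X)\bigr] \;=\; c\,\Sigma\alpha_0, \qquad c := \frac{\mathrm{Cov}(\alpha_0^T X,\Psi_0(\alpha_0^T X))}{\alpha_0^T\Sigma\alpha_0}.
\end{equation*}
Since $\Psi_0$ is non-decreasing and strictly increasing on an interval $[a,b]$ in the domain of $\alpha_0^T X$, and $\alpha_0^T X$ is continuously distributed with positive density on that interval (continuity of the elliptical distribution and positive definiteness of $\Sigma$), the covariance in the numerator is strictly positive, so $c>0$. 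Consequently $\widehat\alpha_n \to \Sigma^{-1} v_0 = c\alpha_0$ almost surely, and $\tilde\alpha_n = \widehat\alpha_n/\|\widehat\alpha_n\| \to \alpha_0$.

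For the CLT, the multivariate CLT applied to the i.i.d. vectors $(X_i, X_iX_i^T, X_i Y_i)$, combined with the finite second-moment assumption $E\|X\|^2 Y^2<\infty$, yields $\sqrt n(S_n-\Sigma, \bar V_n - v_0)$ jointly Gaussian. The continuous mapping/delta method then gives
\begin{equation*}
\sqrt n(\widehat\alpha_n - c\alpha_0) \;=\; \sqrt n\,S_n^{-1}(\bar V_n - v_0) + \sqrt n(S_n^{-1}-\Sigma^{-1})v_0 \;\xrightarrow{d}\; \mathcal N(0,\Gamma)
\end{equation*}
for some $d\times d$ covariance $\Gamma$ expressible in terms of $\Sigma$, $v_0$ and the joint covariance of $(X X^T, XY)$. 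Finally, I would apply the delta method to the map $\varphi(v)=v/\|v\|$ at the point $c\alpha_0$; its Jacobian is $(I_d - \alpha_0\alpha_0^T)/c$, giving
\begin{equation*}
\sqrt n(\tilde\alpha_n - \alpha_0) \;\xrightarrow{d}\; \mathcal N\!\left(0,\; c^{-2}(I_d-\alpha_0\alpha_0^T)\,\Gamma\,(I_d-\alpha_0\alpha_0^T)\right),
\end{equation*}
which is a centered (degenerate, since $\alpha_0$ is a unit vector) Gaussian as claimed.

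The main obstacle is the Stein/elliptic-symmetry computation that produces $v_0 = c\Sigma\alpha_0$ with $c>0$. Everything else is fairly routine: the representation as OLS, the joint CLT under finite second moments, and the delta method for the normalization. The monotonicity assumption enters exactly once, to ensure $c\neq 0$, which is precisely what guarantees that the linear estimator identifies the direction $\alpha_0$ despite misspecification of the linear model.
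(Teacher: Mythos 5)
Your proposal is correct and follows essentially the same strategy as the paper: the closed-form OLS representation $\widehat\alpha_n=\widehat\Sigma_n^{-1}\bar V_n$, the elliptical-symmetry step giving $\mathrm{Cov}(X,Y)=\lambda^*\Sigma\alpha_0$ with $\lambda^*=\mathrm{Cov}\bigl(\Psi_0(\alpha_0^TX),\alpha_0^TX\bigr)/\alpha_0^T\Sigma\alpha_0$, positivity of $\lambda^*$ from monotonicity, a CLT for $\sqrt n(\widehat\alpha_n-\lambda^*\alpha_0)$, and the delta method for $v\mapsto v/\Vert v\Vert$ with Jacobian $(I_d-\alpha_0\alpha_0^T)/\lambda^*$, exactly as in the paper. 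The one genuine divergence is the packaging of the CLT step: you invoke a joint CLT for $(S_n,\bar V_n)$ through the i.i.d. vectors $(X_i,X_iX_i^T,X_iY_i)$ and then linearize $(S,V)\mapsto S^{-1}V$; as written, the $X_iX_i^T$ block requires $E\Vert X\Vert^4<\infty$, which is not among the stated hypotheses. The paper avoids asserting asymptotic normality of the sample covariance matrix by an algebraic rearrangement, $\sqrt n(\widehat\alpha_n-\alpha^*)=\widehat\Sigma_n^{-1}\, n^{-1/2}\sum_i (X_i-\mu)\bigl(Y_i-E(Y)-(X_i-\mu)^T\alpha^*\bigr)+o_p(1)$ with $\alpha^*=\lambda^*\alpha_0$, so that only a CLT for this single i.i.d. average plus $\widehat\Sigma_n^{-1}\to_p\Sigma^{-1}$ and Slutsky are needed; this yields the same influence function and the same limiting covariance $(\lambda^*)^{-2}(I_d-\alpha_0\alpha_0^T)\Sigma^{-1}\Gamma\Sigma^{-1}(I_d-\alpha_0\alpha_0^T)$ that your delta-method linearization produces, but with a visibly weaker CLT input. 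Two smaller points: the positivity $\lambda^*>0$, which you assert from monotonicity, is proved in the paper via the independent-copy identity $2\,\mathrm{Cov}\bigl(\Psi_0(\alpha_0^TX),\alpha_0^TX\bigr)=E\bigl[(\Psi_0(\alpha_0^TZ)-\Psi_0(\alpha_0^TX))(\alpha_0^TZ-\alpha_0^TX)\bigr]>0$ with $Z$ an independent copy of $X$; you should include such an argument rather than state the strict inequality as obvious. Your observation that the limit is degenerate in the direction $\alpha_0$ is consistent with the paper's variance formula.
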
  
Note that by definition of $\widehat{\alpha}_n$, we necessarily have that $
\widehat{\alpha}_n 
                              =  \widehat{\Sigma}_n^{-1}n^{-1}  \sum_{i=1}^n Y_i ( X_i-\bar{X}_n )$, where $\widehat{\Sigma}_n =  n^{-1}  \sum_{i=1}^n  (X_i  -  \bar{X}_n)   (X_i  -  \bar{X}_n)^T$. If the $X_i$'s were known to be centered with identity covariance matrix, one would merely consider the estimator $ n^{-1} \sum_{i=1}^n Y_i  X_i$, which is precisely the estimator considered in Section 1.2 of \cite{plan2016high}. Under the assumptions that (in our notation) $X$ is standard Gaussian and $Y$ is independent of $X$ conditionally on $\alpha^T_0 X$, and $E(\Vert X\Vert^2Y^2)<\infty$, \cite[Proposition 1.1]{plan2016high} shows that their estimator is equal to $\lambda \alpha_0+O_p(\sqrt{d/n})$ in the case $\Vert\alpha_0\Vert=1$, with $\lambda=E(Y\alpha^T_0X)$. As explained in Section 7 of that paper, this result can be generalized to non-standard Gaussian covariates. In fact, if $X$ has a Gaussian distribution with finite mean $\mu\in \RR^d$ and covariance matrix $\Sigma$ such that $\Vert\mu\Vert\leq K$ and all eigenvalues of $\Sigma$ belong to $[K_-,K_+]$ for some positive $K,K_-,K_+$ that do not depend on $d$, then this result can be extended to prove that our $\hat\alpha_n$ is equal to $\lambda^* \alpha_0+O_p(\sqrt{d/n})$ uniformly in $d,n$, with $\lambda^*$ defined as in Section  \ref{sec:ProofMainTheo} of the Supplementary Material. 
This implies that the convergence rate of the linear estimator $\tilde\alpha_n$ depends on the dimension like $\sqrt{d/n}.$ As mentioned by a referee, this rate is optimal as it is the rate one would obtain if the ridge function was equal to the identity.

\subsection{Additional estimators}\label{sec: additional}

In the following we discuss other possible ways of index estimation in the present model.   A well-known estimator in the monotone single index model is the so-called maximum rank correlation (MRC) estimator; see \cite{han1987non}. This estimator is defined as the location of the maximum of $S_n(\alpha)$ over $\alpha\in{\cal S}_{d-1}$ where 
$$S_n(\alpha)=\left(\begin{tabular}{c}$n$\\2\end{tabular}\right)^{-1}\sum_{1\leq i\neq j\leq n}\left[\mathbb{I}_{\{Y_i>Y_j\}}\mathbb{I}_{\{\alpha^TX_i>\alpha^TX_j\}}+\mathbb{I}_{\{Y_i<Y_j\}}
\mathbb{I}_{\{\alpha^TX_i<\alpha^TX_j\}}\right].
$$
Strong consistency of a variant of the MRC estimator is proved in \cite{han1987non} under the assumption that (a) the noise $Y-E(Y|X)$ is independent of $X$, (b) for a given $h\in\{1,\dots,d\}$ the component $h$ of $\alpha_0$ is in absolute value greater than a given $\eta>0$, and (c) the distribution of $X$ behaves \lq\lq nicely\rq\rq. Hence, the variant of the MRC estimator is defined as the location of the maximum of $S_n(\alpha)$ over the set of all $\alpha$'s in ${\cal S}_{d-1}$ whose component $h$ is in absolute value greater than $\eta$. This implies in particular that one would need to know both $h$ and $\eta$, which is quite unrealistic in our opinion.

Building upon the Isotron algorithm of \cite{kalaisastry},  \cite{kakade2011efficient} propose an iterative algorithm in the monotone single index model, called the Slisotron, which finds estimators of the index and monotone ridge function under the additional assumption that the latter is Lipschitz. More precisely, in the updates of the isotonic estimator, the Slisotron algorithm looks for the least squares monotone estimator which is also Lipschitz. 
Slisotron produces estimators for both the index and ridge function, and in view of the current discussion we are interested here in the former.  Theorem~2 of  \cite{kakade2011efficient}  shows that the both true and empirical mean squared errors are of order $n^{-1/3} \log n $ with large probability for some appropriate iteration of Slisotron. This indicates that their  estimator of the index  converges, ignoring the logarithmic factor, at the $n^{1/6}$ rate, which is significantly worse than the cubic rate achieved by our least squares estimator of the index, see Corollary \ref{cor:rate_alpha} below. 

Assuming again that the ridge function is Lipschitz, and assuming also that the response variable takes values in $[0,1]$, \cite{ganti2015learning} provide an estimation method that applies even for the case of high-dimensional  covariates. Their ``SILO" method can be viewed as an extension of the linear estimator (described above in Section \ref{sec:linear_est}) to the high-dimensional case: similar to the linear estimator, the SILO estimator of the index does not take into account the ridge function. Other estimation methods can be found in the compressed sensing literature (some of them are designed for the case of binary response variables), see e.g.   \cite{plan2013robust,plan2016high}.

There are several other alternatives which return an estimate of the index in the single index model with a non necessarily monotone ridge function, and these could also be used here.   For example, one could use kernel-based methods, discussed for example in \citet{hardle1993optimal, chiou2004quasi, hristache2001direct}.   Although these methods yield an estimator which is $\sqrt{n}$-consistent, they do rely on smoothing parameters (the bandwidth) for their estimator.   

\section{Convergence of the LSE for the regression function}\label{sec: rate}
We consider the same setting as in Section \ref{sec: LSE}, however,  we now also assume that $X$ has a continuous distribution. This means that we observe an i.i.d. sample $(X_i, Y_i), i =1, \ldots, n$ from a pair $(X,Y)$ where, with probability one, all the $X_i$'s are different from each other (hence, in the notation of Section \ref{sec: LSE}, $n=m$ and $\tilde n_i=1$ for all $i=1, \dots,n$). It is assumed that $E(Y|X)=\Psi_{0}(\alpha_{0}^TX)$
almost surely, where both the index $\alpha_0$ and the monotone ridge function $\Psi_0$ are unknown, so the regression function is defined by
\begin{equation}\label{eq: g0}
g_0(x)= \Psi_0(\alpha_0^Tx)
\end{equation}
for (almost-) all $x$ in the support of $X$ and its least-squares estimator (LSE)  is given by
\begin{equation}\label{eq: LSEreg}
\hat g_n(x)=\hat \Psi_n(\hat\alpha_n^Tx)
\end{equation}
for (almost-) all $x$ in the support of $X$, where $(\hat\Psi_n,\hat\alpha_n)$ is a LSE  of $(\Psi_0,\alpha_0)$ as studied in Section \ref{sec: LSE}.
For convenience,  we consider below a solution $\hat\Psi_n$ that is left continuous and piecewise constant, with jumps only possible at the points $\hat\alpha_n^TX_i,$ 
for $i=1,\dots,n$. Hence, $\hat\Psi_n$ is uniquely defined whereas $\hat\alpha_n$ is not unique  ($\hat\alpha_n$ denotes an arbitrary minimizer of $\hat h_n$, in the notation of Section \ref{sec: LSE}). 
In this section,  we are interested in the consistency and rate of convergence of $\hat g_n$ in the $L_2$-sense.   

We begin with some notation and assumptions. Let  $\mathcal{X}$ be the support of the random vector of covariates $X$. Let $\PP$ be the joint distribution of $(X, Y)$, $\mathbb{P}_x$ the conditional distribution of $Y$ given $X =x$, and $\mathbb{Q}$ the marginal distribution of $X$. 
Theorem \ref{theo: rateL2} below will be established under the following assumptions.

\begin{list}{}
        {
        \setlength{\topsep}{6pt}
        \setlength{\parskip}{0pt}
        \setlength{\partopsep}{0pt}
        \setlength{\parsep}{0pt}
        \setlength{\itemsep}{3pt}
        \setlength{\leftmargin}{25pt}}

\item[(A1).] $\mathcal{X}$  is a bounded convex set  of $\mathbb{R}^d$,

\item[(A2).] there exists a constant $K_0 > 0$ such that $\vert g_0(x)  \vert \le K_0$ for all $x \in \mathcal{X}$,
\item[(A3).] there exist constants $a_0 > 0$ and $M > 0$ such that for all integers $s\geq 2$  and $x\in\mc X$
\begin{equation}\label{eq: hypexpo}
\int|y|^sd\mathbb P_x(y)\leq a_0s! M^{s-2}, 
\end{equation}

\item[(A4).]  there exist $\overline q>0$ and $\underline q>0$ such that  with respect to the Lebesgue measure,  for all $\alpha\in {\cal S}_{d-1}$,  the variable $\alpha^TX$ has a density that is bounded from above by $\overline q$ and bounded from below by $\underline q$.  

\end{list}

Assumption (A1) ensures that  for all $\alpha\in{\cal S}_{d-1}$, the set $\{\alpha^Tx,\ x\in\cal X\}$, which is the support of the linear predictor $\alpha^TX$ corresponding to $\alpha$,  is convex ({\it i.e.} is an interval). Hence, we consider functions of the form $\Psi(\alpha^TX)$ where $\alpha\in\mathcal S_{d-1}$ and $\Psi$ is a non-decreasing function on its interval of support $\{\alpha^Tx,\ x\in\cal X\}$.

Assumption (A3) ensures that conditionally on $X=x$, the response variable $Y$ is uniformly integrable in $x$. This assumption is clearly satisfied if $Y$ is a bounded random variable. It is also satisfied if the conditional distribution of $Y$ given $X$ belongs to an exponential family, see Proposition~\ref{prop: EF1} in  Appendix \ref{sec: EF} for more details.

Assumption (A4) makes all distributions of $\alpha^TX$, with $\alpha\in{\cal S}_{d-1}$, equivalent to the Lebesgue measure. 
For instance, if  $\QQ$ is the  distribution resulting from  truncating a Gaussian distribution so that it is supported on $\{x,\ \|x\|\leq R\}$, and if  $\lambda_->0$ and $\lambda_+$ are the smallest and largest  eigenvalues of the covariance matrix of the original Gaussian distribution, then we can take $\overline q=(2\pi\lambda_-)^{-1/2}$ and $\underline q=(2\pi\lambda_+)^{-1/2}\exp(-R^2/\lambda_-)$.



The following theorem proves the $n^{1/3}$-rate of convergence of the bundled estimator $\hat g_n$ under the above assumptions, i.e. under the assumption of a continuous design distribution $\QQ$. The case of a discrete distribution with finite support will be considered in a separate paper.  In this case, a $\sqrt n$-rate of convergence can be proved.   We conjecture that in the case when some of the components of $X$ are continuous, and the other ones are discrete, the rate of convergence is still $n^{1/3}$. Another case where the $\sqrt n$-rate of convergence emerges (up to a $\log n$-factor) is when the true ridge function is constant. This case also will be studied elsewhere.


\begin{theorem}\label{theo: rateL2}   With $g_0$ and $\hat g_n$ defined by \eqref{eq: g0} and \eqref{eq: LSEreg} respectively, where $\hat\Psi_n$ is  the same piecewise constant function described above, and under assumptions (A1)-(A4)  we have
\begin{equation}\label{eq: cvg}
\left(\int_{\mathcal{X}} \left(\widehat{g}_n(x)-g_0(x)\right)^2 d\QQ(x)\right)^{1/2}  = O_p(n^{-1/3}).
\end{equation}
\end{theorem}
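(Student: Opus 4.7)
The plan is to deploy the empirical-process machinery for least-squares $M$-estimators over shape-constrained classes, in the spirit of Theorem~3.4.1 of van der Vaart and Wellner. Four ingredients are needed: a basic inequality, a uniform envelope on $\hat\Psi_n$, a bracketing-entropy bound for the bundled class, and a peeling/chaining step that extracts the $n^{1/3}$ rate.

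First, since $(\hat\Psi_n,\hat\alpha_n)$ minimizes $h_n$, comparing the criterion at $(\hat\Psi_n,\hat\alpha_n)$ and at $(\Psi_0,\alpha_0)$ yields the basic inequality
$$\|\hat g_n-g_0\|_n^2\leq \frac{2}{n}\sum_{i=1}^n \varepsilon_i\bigl(\hat g_n(X_i)-g_0(X_i)\bigr),$$
where $\varepsilon_i=Y_i-g_0(X_i)$ and $\|\cdot\|_n$ is the empirical $L_2$-norm. Next, since the values $\hat\Psi_n(\hat\alpha_n^TX_i)$ come from the PAVA characterization of Theorem~\ref{theo: caractLSE2} and hence lie among partial averages of the $Y_i$'s, combining (A2) with the Bernstein-type moment bound (A3) produces a constant $K>0$ for which $\PP(\|\hat\Psi_n\|_\infty>K)\to 0$ (with a mild truncation allowing $K=K_n\to\infty$ slowly if needed). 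On that high-probability event, $\hat g_n$ lies in the fixed class
$$\mathcal F_K=\bigl\{x\mapsto\Psi(\alpha^Tx):\Psi\in\mathcal M_K,\ \alpha\in\mathcal S_{d-1}\bigr\},$$
where $\mathcal M_K$ is the class of monotone functions bounded by $K$.

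The central estimate is the bracketing bound
$$\log N_{[]}\bigl(\varepsilon,\mathcal F_K,L_2(\QQ)\bigr)\lesssim \varepsilon^{-1},$$
with constants depending on $d,K,\mathcal X$. To prove it, cover $\mathcal S_{d-1}$ with a $\delta$-net of cardinality $\asymp\delta^{-(d-1)}$ and, for each $\alpha$ in the net, use the classical bracketing of bounded monotone functions of log-cardinality $\asymp K/\varepsilon$. To glue the two covers, observe that under (A1) and (A4), for $\|\alpha-\alpha'\|\leq\delta$ and $\Psi\in\mathcal M_K$, a Fubini argument combined with the uniform bound $\overline q$ on the density of $\alpha^TX$ and monotonicity of $\Psi$ gives
$$\int\bigl|\Psi(\alpha^Tx)-\Psi(\alpha'^Tx)\bigr|^2\,d\QQ(x)\lesssim K^2\overline q\,\mathrm{diam}(\mathcal X)\,\delta;$$
setting $\delta\asymp\varepsilon^2$ then yields $L_2(\QQ)$-brackets of size $\varepsilon$, and the claimed entropy follows since the monotone term $K/\varepsilon$ dominates $(d-1)\log(1/\varepsilon)$ for small $\varepsilon$.

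With entropy $\lesssim\varepsilon^{-1}$, a standard peeling argument applied to the empirical process on the right-hand side of the basic inequality, with (A3) entering through Bernstein-type maximal inequalities to handle the unbounded noise, furnishes the modulus $\phi_n(r)\asymp\sqrt r$; solving $r_n^2\phi_n(r_n^{-1})\lesssim \sqrt n$ produces $\|\hat g_n-g_0\|_n=O_P(n^{-1/3})$, and (A4) combined with a Glivenko--Cantelli/Talagrand concentration step transfers the rate to $L_2(\QQ)$. I expect the main obstacle to be the bracketing-entropy bound: a pure monotone class and a pure sphere cover are both standard, but composing them forces one to translate $\alpha$-perturbations into one-dimensional Lebesgue mass via (A4) so that monotonicity can still be exploited, and the delicate balance between the two cover sizes is exactly what keeps the exponent at $\varepsilon^{-1}$ (rather than $\varepsilon^{-2}$, which would only yield the slower rate $n^{-1/4}$).
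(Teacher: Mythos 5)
There is a genuine gap at the envelope step, and it propagates through the rest of your argument. You assert that the PAVA characterization together with (A2)--(A3) ``produces a constant $K>0$ for which $\PP(\|\hat\Psi_n\|_\infty>K)\to 0$.'' This is not true under (A3), and the paper explicitly points out that it does not know whether the LSE is bounded in probability by a constant independent of $n$. The PAVA values are min--max averages of the $Y_i$'s, so all one gets is $\min_k Y_k\le\hat g_n\le\max_k Y_k$, and under the Bernstein moment condition (A3) (which permits, e.g., exponential tails) one only has $\max_k|Y_k|=O_p(\log n)$ (Lemma \ref{lem: LSElog}). Your parenthetical hedge ``$K=K_n\to\infty$ slowly if needed'' concedes this, but then the entropy bound becomes $K_n/\eps\asymp(\log n)/\eps$ and the modulus in your peeling step inherits powers of $\log n$; solving $r_n^2\phi_n(r_n^{-1})\lesssim\sqrt n$ then yields $n^{-1/3}$ times a polylogarithmic factor (this is exactly the paper's intermediate Theorem \ref{theo: ratelog}, with rate $n^{-1/3}(\log n)^{5/3}$), not the clean $O_p(n^{-1/3})$ claimed in the statement.

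The missing idea is the second stage that removes the logarithm, and it is also where the lower bound $\underline q$ in (A4) is actually used (in your sketch only $\overline q$ appears, in the gluing of the sphere net to the monotone brackets). The paper first proves the crude rate, which localizes $\hat g_n$ to the class $\mc G_{Kv}$ with $K=C\log n$ and $v=(\log n)^2n^{-1/3}$, and then decomposes each $g\in\mc G_{Kv}$ as $g=\bar g+(g-\bar g)$ with $\bar g$ the truncation of $g$ at level $2K_0$. The point is that $D(g,g_0)\le v$ together with $|g_0|\le K_0$ and the density lower bound $\underline q$ forces $|\Psi|>2K_0$ only on an interval of Lebesgue measure $O(v^2/(\underline q K_0^2))$; hence $g-\bar g$ is supported on two tiny intervals, its bracketing entropy is $O(Kv/\eps)+d\log(\cdot)$ rather than $O(K/\eps)$, and the Bernstein-norm maximal inequality applied separately to the bounded part $\bar g$ and the thin-support part $g-\bar g$ gives a modulus $\phi_n(v)=v^{1/2}(1+v^{-3/2}n^{-1/2})$ free of logarithms. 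Without this localization-plus-truncation step (or some substitute), your argument cannot reach the stated rate.
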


\begin{rem}
\begin{itemize}\label{rem :subG} 
\item
If instead of assuming (A4) we only assume that there exists a $\overline q>0$ such that with respect to the Lebesgue measure and for all $\alpha\in {\cal S}_{d-1}$,  the variable $\alpha^TX$ has a density bounded above by $\overline q$, then we obtain a rate of convergence $n^{-1/3}(\log n)^{5/3}$ instead of $n^{-1/3}$, see Section 
\ref{sec: mainsteps} below for details.

\medskip

\item
The convergence rate obtained above depends on the dimension $d$. A closer look at the proof reveals that this dependence takes the form of $
O_p(d(1+\sqrt{\overline q R})n^{-1/3}(\log n)^{5/3})$, see Theorem \ref{theo: ratelog} below. Note that the constant $R$ may hide a dependence on $d$ since in the case where $\cal X$ is the $\ell_\infty$-unit ball in $\mathbb{R}^d$ we have $R=\sqrt d$.

\medskip

\item  Suppose that we relax assumptions (A1) and (A4).   That is, instead of assuming (A4), we only assume here that there exists $\overline q>0$ such that  with respect to the Lebesgue measure,  for all $\alpha\in {\cal S}_{d-1}$,  the variable $\alpha^TX$ has a density that is bounded from above by $\overline q$. Moreover, instead of assuming that $X$ has a bounded support, we assume that $X$ has a sub-Gaussian distribution. This means that there exists $\sigma^2>0$ such that for all vectors $u\in {\cal S}_{d-1}$, and all $t\in\RR$, with $T=u^TX$ we have
$$P(T-E(T)>t)\leq\exp(-t^2/(2\sigma^2))\mbox{ and }P(T-E(T)<-t)\leq\exp(-t^2/(2\sigma^2)).$$
Then, for all $\eps>0$ there exists $A>0$ such that with probablity larger than $1-\eps$
 we have
\begin{eqnarray}\label{eq: subG}
\int_{\mathcal{X}} \left(\widehat{g}_n(x)-g_0(x)\right)^2 d\QQ(x)
&\leq & A \sqrt{\overline q}d^{5/4}(\log (n\vee d))^{1/4}n^{-1/3}(\log n)^{5/3},
\end{eqnarray}
see Section \ref{sec: proofsubG} in the supplement for details. If $d$ does not depend on $n$ then this yields a rate of convergence $n^{-1/3}(\log n)^{23/12}$.
\end{itemize}

\end{rem}

\section{Convergence of the separated LSE estimators}\label{sec: separate}

We now derive from Theorem \ref{theo: rateL2} convergence of $\widehat\alpha_{n}$  to $\alpha_{0}$ and $\hat \Psi_{n}$ to $\Psi_{0}$. Moreover, we are interested in the rate of convergence of the two estimators. Convergence can happen only under uniqueness of the limit so first we prove identifiability of $\Psi_0$ and $\alpha_0$ under appropriate conditions.

\subsection{Identifiability of the separated parameters}\label{sec: identifiability}

Let  $(X,Y)$ be a pair of random variables, where $X$ takes values in $\RR^d$ and $Y$ is an integrable real valued random variable such that \eqref{linkPsi} holds for some $\alpha_{0}\in\mathcal S_{d-1}$ and $\Psi_{0}\in{\cal M}.$ 
Identifiability of the  parameter $(\alpha_{0},\Psi_{0})$  means here that if we can find $\beta$ in $\mathcal S_{d-1}$, and  $h$ in ${\cal M}$ such that
$
\Psi_0(\alpha_0^T X) = h(\beta^T X) \ a.s.
$
then $\beta=\alpha_0$ and $h=\Psi_0$ on ${\cal C}_{\alpha_0}={\cal C}_{\beta}$, where for all $\alpha\in{\cal S}_{d-1}$ we set ${\cal C}_\alpha = \{ \alpha^T x, x \in \mathcal{X} \}$ with $\cal X$ being the support of $X$. Although identifiability can be derived from
 \cite{lin2007identifiability} when assuming that $\Psi_{0}$ is non-constant and continuous, for completeness we state below identifiability under a slightly less restrictive assumption, namely left-- (or right--) continuity instead of continuity. A proof can be found in Section \ref{proof: identifiability} in the Supplement.
Since $\cal X$ is convex,  it follows that ${\cal C}_\alpha$ is an interval for any $\alpha$. Moreover, recall that monotone functions on an interval can be extended to monotone functions on~$\RR$.









 \begin{prop}\label{prop: identifiability}
Assume that $\mathcal X$ is convex with at least one interior point.  Assume also that $X$ has a density with respect to Lebesgue measure which is strictly positive on $\mathcal X,$  and 
that~\eqref{linkPsi} holds for some  $\alpha_{0}\in{\cal S}_{d-1}$ and $\Psi_{0}\in{\cal M}$ that is not constant on ${\cal C}_{\alpha_{0}}$, and either left- or right-continuous on ${\cal C}_{\alpha_{0}}$ with no discontinuity point at the boundaries of ${\cal C}_{\alpha_{0}}$. Then, $(\Psi_{0},\alpha_{0})$ is uniquely defined. 
\end{prop}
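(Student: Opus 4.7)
The plan is to show that any pair $(h,\beta)\in\mathcal{M}\times\mathcal{S}_{d-1}$ satisfying $\Psi_0(\alpha_0^TX)=h(\beta^TX)$ almost surely must have $\beta=\alpha_0$, after which $h=\Psi_0$ on $\mathcal{C}_{\alpha_0}$ follows easily. I would split by the position of $\beta$ relative to $\alpha_0$. The easy case $\beta=-\alpha_0$ is ruled out at once: set $\tilde h(s):=h(-s)$, which is non-increasing, and note that $\alpha_0^TX$ has a strictly positive Lebesgue density on $\mathrm{int}(\mathcal{C}_{\alpha_0})$ (since $X$ does on $\mathcal{X}$ and $\mathcal{X}$ has a nonempty interior), so the identity reads $\Psi_0=\tilde h$ Lebesgue-a.e. on $\mathcal{C}_{\alpha_0}$. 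A function that is simultaneously a.e. non-decreasing and a.e. non-increasing on an interval is a.e. constant, and the assumed one-sided continuity together with continuity at the boundary of $\mathcal{C}_{\alpha_0}$ promotes this to outright constancy, contradicting the non-constancy of $\Psi_0$.

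The substantive case is $\beta\notin\{\pm\alpha_0\}$, i.e. $\alpha_0$ and $\beta$ linearly independent. Consider the surjective linear map $\phi(x)=(\alpha_0^Tx,\beta^Tx)$ from $\mathbb{R}^d$ to $\mathbb{R}^2$; then $\phi(\mathcal{X}^\circ)$ is open and convex in $\mathbb{R}^2$, and a coarea-type computation using the strict positivity of the density of $X$ on $\mathcal{X}$ shows that $(\alpha_0^TX,\beta^TX)$ has a density on $\phi(\mathcal{X})$ that is strictly positive on $\phi(\mathcal{X}^\circ)$. Thus the a.s. identity upgrades to $\Psi_0(s)=h(t)$ for Lebesgue-a.e.\ $(s,t)\in\phi(\mathcal{X}^\circ)$. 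Pick a rectangle $R=(s_1,s_2)\times(t_1,t_2)\subset\phi(\mathcal{X}^\circ)$. By Fubini in $\mathbb{R}^2$, for a.e.\ $t\in(t_1,t_2)$ the equality $\Psi_0(s)=h(t)$ holds on a full-measure subset of $(s_1,s_2)$, so $\Psi_0$ takes a.e.\ a constant value on $(s_1,s_2)$; monotonicity then upgrades this to genuine constancy of $\Psi_0$ on $(s_1,s_2)$, and symmetrically $h$ is constant on $(t_1,t_2)$. Because $\phi(\mathcal{X}^\circ)$ is convex (hence connected) and can be covered by overlapping rectangles whose first-coordinate projections exhaust $\alpha_0^T\mathcal{X}^\circ=\mathrm{int}(\mathcal{C}_{\alpha_0})$, the locally-constant values of $\Psi_0$ must coincide, so $\Psi_0$ is constant on $\mathrm{int}(\mathcal{C}_{\alpha_0})$. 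Continuity at the endpoints of $\mathcal{C}_{\alpha_0}$ extends this to all of $\mathcal{C}_{\alpha_0}$, contradicting the assumption that $\Psi_0$ is not constant on $\mathcal{C}_{\alpha_0}$.

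The remaining case $\beta=\alpha_0$ gives $h(\alpha_0^Tx)=\Psi_0(\alpha_0^Tx)$ on a set of full $\mathbb{Q}$-measure, hence $h=\Psi_0$ Lebesgue-a.e.\ on $\mathcal{C}_{\alpha_0}$, and the one-sided continuity of $\Psi_0$ combined with monotonicity of $h$ identifies the two functions at every point of $\mathcal{C}_{\alpha_0}$. The main technical obstacle is the second step: I need to verify carefully that the 2D push-forward measure $\phi_*(f_X\mathbf{1}_{\mathcal{X}}\,dx)$ has a density that is strictly positive on the open convex set $\phi(\mathcal{X}^\circ)$ (so that the a.s.\ equality becomes a Lebesgue-a.e.\ equality in $(s,t)$), and that the rectangle-to-rectangle overlap argument is carried out without gaps, using that $(d-2)$-dimensional fibres of $\phi$ meet $\mathcal{X}$ in sets of positive $(d-2)$-dimensional Hausdorff measure on $\phi(\mathcal{X}^\circ)$.
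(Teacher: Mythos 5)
Your proposal is correct, but it takes a genuinely different route from the paper's. The paper first uses the one-sided continuity of both $\Psi_0$ and $h$ to upgrade the almost-sure identity to a pointwise identity on the interior of $\mathcal X$, then localizes to a small ball on which $x\mapsto\Psi_0(\alpha_0^Tx)$ is non-constant and argues purely pointwise: evaluating along the directions $\alpha_0$ and $\beta$ and using monotonicity it first shows $\Psi_0=h$ near the centre of the ball, then picks a point of strict increase $b$ and evaluates at well-chosen points such as $x=(b+\varepsilon)\beta$ or $x=b\alpha_0$ to contradict $\Psi_0(\alpha_0^Tx)=\Psi_0(\beta^Tx)$, with a separate pointwise argument for $\beta=-\alpha_0$. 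You instead push the problem to $\mathbb{R}^2$: positivity of the density of $(\alpha_0^TX,\beta^TX)$ on $\phi(\mathcal X^\circ)$ — a marginalization/coarea fact that does hold here, since for $(s,t)\in\phi(\mathcal X^\circ)$ the fibre meets $\mathcal X^\circ$ in a nonempty relatively open subset of a $(d-2)$-dimensional affine subspace on which the density of $X$ is positive — turns the a.s. identity into a Lebesgue-a.e. identity between a function of $s$ and a function of $t$ on an open convex set, and Fubini plus monotonicity plus chaining of overlapping rectangles forces $\Psi_0$ to be constant on $\mathrm{int}(\mathcal C_{\alpha_0})$, which the no-boundary-discontinuity assumption upgrades to constancy on $\mathcal C_{\alpha_0}$, contradicting the hypothesis; your treatment of $\beta=-\alpha_0$ (non-decreasing equals non-increasing a.e.) is likewise sound. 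What each approach buys: yours is conceptually transparent, uses the one-sided continuity of $\Psi_0$ essentially only at the boundary, and isolates the geometric content (two independent linear indices force constancy), at the price of verifying the two-dimensional marginal density positivity; the paper's is more elementary (no disintegration or Fubini, only the fact that the exceptional set is Lebesgue-null) but requires more delicate pointwise bookkeeping with points of strict increase and invokes the one-sided continuity from the outset. One caveat, shared with the paper: your final sentence, identifying $h$ with $\Psi_0$ at \emph{every} point of $\mathcal C_{\alpha_0}$, is only valid if $h$ is given the same normalization as $\Psi_0$ (left- or right-continuous, no discontinuity at the boundary of $\mathcal C_{\alpha_0}$), which is exactly what the paper's own proof assumes at the start; for a general $h\in\mathcal M$, equality can fail at jump points of $\Psi_0$ and at the endpoints of $\mathcal C_{\alpha_0}$, since those values are never seen with positive probability.
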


\subsection{Convergence of the separated estimators}\label{sec: cvSep}

We begin by establishing consistency of $(\widehat{\alpha}_n,\widehat{\Psi}_n)$ where  $\hat\Psi_n$ denotes the left-continuous LSE of $\Psi_0$ extended to $\RR$ and $\hat\alpha_n$ is a minimizer of $\hat h_n$  defined in \eqref{line:alg2D}, see Section \ref{sec: proofCons} in the Supplementary Material for a proof. 

\begin{theorem} \label{theo: consistency}
Assume that assumptions (A1)-(A3) are satisfied and that there exists a $\overline q>0$ such that for all $\alpha\in {\cal S}_{d-1}$, with respect to the Lebesgue measure, the variable $\alpha^TX$ has a density that is bounded above by $\overline q$. 
Assume, moreover,  that  $\Psi_{0}$ is  non-constant and left-continuous with no discontinuity points at the boundaries of  ${\cal C}_{\alpha_{0}}$, and that  $\mathcal X$ has at least one interior point.  Assume also that $X$ has a density with respect to Lebesgue measure which is strictly positive on $\mathcal X$. 
\begin{enumerate}
\item
We then have $\hat\alpha_{n}=\alpha_{0}+o_{p}(1),$ and for all fixed continuity points $t$ of $\Psi_0$ in the interior of ${\cal C}_{\alpha_{0}}$, $\Psi_{n}(t)$ converges in probability to $\Psi_{0}(t)$ as $n\to\infty$. 
\item If, moreover, $\Psi_0$ is continuous, then
\begin{eqnarray}\label{eq: fnhatConsist}
\sup_{t\in I}\vert\hat \Psi_{n}(t)-\Psi_{0}(t)\vert=o_{p}(1)
\end{eqnarray}
for all compact intervals $I\subset\RR$ such that $  K_-<\Psi_{0}(t)<K_+$ for all $t\in I.$ Here,  $K_+$  and $K_-$ denote the largest and smallest values of $\Psi_0$ on ${\cal C}_{\alpha_{0}}$.
\end{enumerate}
\end{theorem}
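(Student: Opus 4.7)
The proof proceeds by a subsequence argument combined with Proposition~\ref{prop: identifiability}. Applying Theorem~\ref{theo: rateL2} in the form given by the first item of Remark~\ref{rem :subG} (which only requires an upper bound on the density of $\alpha^T X$), we have
\[
\int_{\mathcal X}(\hat g_n(x)-g_0(x))^2\,d\QQ(x) = o_p(1).
\]
By the subsequence characterization of convergence in probability, it suffices to show that every subsequence of $(\hat\alpha_n,\hat\Psi_n)$ admits a further subsequence along which $\hat\alpha_n\to\alpha_0$ almost surely and $\hat\Psi_n(t)\to\Psi_0(t)$ almost surely at every continuity point $t$ of $\Psi_0$ in $\mathrm{int}(\mathcal C_{\alpha_0})$. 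Along an arbitrary subsequence, extract further so that $\hat g_n\to g_0$ in $L^2(\QQ)$ almost surely, and then $\QQ$-almost surely after one more extraction, and $\hat\alpha_n\to\alpha^\star$ almost surely for some $\alpha^\star\in\mathcal S_{d-1}$ by compactness of the unit sphere.

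The critical step is to build a pointwise limit $\Psi^\star$ of $\hat\Psi_n$ via Helly's selection theorem; this requires a uniform bound on $\hat\Psi_n$ over compact sub-intervals of $\mathrm{int}(\mathcal C_{\alpha^\star})$, and is the main obstacle in the proof. A priori, $\hat\Psi_n$ is only bounded by $\max_i|Y_i|$, which under (A3) is $O_p(\log n)$. To bypass this, for any compact $[a,b]\subset\mathrm{int}(\mathcal C_{\alpha^\star})$ I pick $x_a,x_b\in\mathcal X$ with $\alpha^{\star T}x_a<a<b<\alpha^{\star T}x_b$ at which $\hat g_n(x_a)\to g_0(x_a)$ and $\hat g_n(x_b)\to g_0(x_b)$ (possible since $\hat g_n\to g_0$ on a set of full $\QQ$-measure and $\alpha^{\star T}X$ has a density). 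For $n$ large, $\hat\alpha_n^T x_a<a$ and $\hat\alpha_n^T x_b>b$, so monotonicity of $\hat\Psi_n$ yields the sandwich
\[
\hat\Psi_n(\hat\alpha_n^T x_a) \le \hat\Psi_n(t) \le \hat\Psi_n(\hat\alpha_n^T x_b), \qquad t\in[a,b],
\]
whose two ends converge to the bounded values $g_0(x_a)$ and $g_0(x_b)$ by (A2). A diagonal application of Helly over a countable exhaustion of $\mathrm{int}(\mathcal C_{\alpha^\star})$ by such intervals then produces a further subsequence along which $\hat\Psi_n\to\Psi^\star$ at every continuity point of some non-decreasing $\Psi^\star$ on $\mathrm{int}(\mathcal C_{\alpha^\star})$.

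For $x\in\mathcal X$ such that $\alpha^{\star T}x\in\mathrm{int}(\mathcal C_{\alpha^\star})$ is a continuity point of $\Psi^\star$, the same sandwich, applied with continuity points of $\Psi^\star$ converging to $\alpha^{\star T}x$ from both sides, shows $\hat g_n(x)\to\Psi^\star(\alpha^{\star T}x)$; comparing with the $\QQ$-almost sure limit $\hat g_n(x)\to g_0(x)=\Psi_0(\alpha_0^T x)$ and using that the discontinuity set of $\Psi^\star$ is countable (hence negligible under the density of $\alpha^{\star T}X$) yields $\Psi^\star(\alpha^{\star T}x)=\Psi_0(\alpha_0^T x)$ for $\QQ$-a.e.\ $x$. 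After extending $\Psi^\star$ to $\mathcal C_{\alpha^\star}$ as a left-continuous non-decreasing function with no discontinuity at the endpoints, Proposition~\ref{prop: identifiability} applies and forces $\alpha^\star=\alpha_0$ and $\Psi^\star=\Psi_0$ on $\mathcal C_{\alpha_0}$; $\Psi^\star$ cannot be constant, since otherwise $\Psi_0(\alpha_0^T X)$ would be $\QQ$-a.s.\ constant, contradicting that $\Psi_0$ is non-constant on $\mathcal C_{\alpha_0}$ and that $X$ has a strictly positive density on $\mathcal X$. This proves part~(1).

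For part~(2), continuity of $\Psi_0$ makes every point of $\mathrm{int}(\mathcal C_{\alpha_0})$ a continuity point, so part~(1) gives pointwise convergence of $\hat\Psi_n$ to $\Psi_0$ throughout $\mathrm{int}(\mathcal C_{\alpha_0})$. A standard P\'olya-type argument (pointwise convergence of monotone functions to a continuous limit on an open interval) then yields uniform convergence on every compact sub-interval of $\mathrm{int}(\mathcal C_{\alpha_0})$. The condition $K_-<\Psi_0<K_+$ on $I$, together with the monotonicity of $\Psi_0$ and the fact that $\Psi_0$ attains its extreme values $K_\pm$ only at the endpoints of $\mathcal C_{\alpha_0}$, forces $I\subset\mathrm{int}(\mathcal C_{\alpha_0})$, and \eqref{eq: fnhatConsist} follows.
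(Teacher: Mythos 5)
Your proof is correct, and while it shares the paper's overall skeleton (double-subsequence argument, compactness via Helly, identification of the limit through Proposition~\ref{prop: identifiability}), it handles the key compactness step by a genuinely different device. The paper works with the truncated estimator $\bar\Psi_n$ (clipped to $[K_-,K_+]$), which is uniformly bounded so Helly applies globally, identifies the limit through an $L^2(\QQ)$ three-term decomposition plus dominated convergence, and must at the end translate statements about $\bar\Psi_n$ back into statements about $\hat\Psi_n$ (this is exactly why the condition $K_-<\Psi_0<K_+$ on $I$ enters the paper's proof of part~(2)). You instead keep the raw $\hat\Psi_n$ and obtain a local uniform bound on compact subintervals of $\mathrm{int}(\mathcal C_{\alpha^\star})$ via the monotonicity sandwich at two covariate points where $\hat g_n$ converges pointwise, then identify the limit $\QQ$-a.e.\ pointwise rather than in $L^2$. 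Your route buys a cleaner part~(2): you get pointwise convergence of $\hat\Psi_n$ itself on the whole interior, so uniform convergence on compacts is immediate, with no need to argue that the truncation is eventually inactive. Two small remarks: the successive ``further extractions'' (to pass from $L^2(\QQ)$ to $\QQ$-a.e.\ pointwise convergence, and to apply Helly) are path-dependent, so the argument should be explicitly organized per fixed $\omega$ inside the double-subsequence scheme, as the paper does with ``we argue along paths''; and in part~(2) the assertion that $\Psi_0$ attains $K_\pm$ only at the endpoints of $\mathcal C_{\alpha_0}$ need not hold (it may be constant near an endpoint), but the conclusion $I\subset\mathrm{int}(\mathcal C_{\alpha_0})$ still follows directly from the convention that $\Psi_0$ is extended as a constant outside $\mathcal C_{\alpha_0}$, so this is harmless.
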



\begin{table}[!h]
	\centering
	\caption{Values of the empirical covariances matrices for the case $d=2$  of $n^{1/2} (\widehat{\alpha}_n - \alpha_0)$ and   $n^{1/3} (\widehat{\alpha}_n - \alpha_0)$  with entries  $\widehat{\sigma}^2_{11}, \widehat{\sigma}^2_{22}, \widehat{c}_{12} = \widehat{c}_{21}$.   The sample size $n \in \{10^2, 10^3, 10^4, 10^5\}$ and $\alpha_0 = (\cos(\theta_0), \sin(\theta_0))^T$ with $\theta_0 \in \{\pi/4, \pi/3, \pi/2\}$. The obtained estimates were computed based on $100$ replications  and the model  $Y \sim \mathcal{N}((\alpha^T_0 X)^3, 1)$ and $X \sim \mathcal{U}[0,1] \times \mathcal{U}[0,1] $.   }
	\label{TableGauss}
	\vspace{0.5cm}
	\scalebox{0.99}{
		\begin{tabular}{|clr|ccc |}
			\hline
			$\theta_0$ &  Rate & $n$ & $\widehat \sigma^2_{11}$ & $\widehat\sigma^2_{22}$ & $\widehat{c}_{12}$ \\
			\hline
                   &  & & & &   \\
			& $n^{1/2}$ & 100 & 1.1806089 & 1.1672180 & -1.1626274 \\
			& &1000 & 2.2595533 & 2.2470109  & -2.2470183 \\
			& &10000& 4.4150436 & 4.3392369 & -4.3737141   \\
			& &100000&  6.663009  & 6.604052 &  -6.632743\\
			$\pi/4$ &  & & & &  \\
			 &  & & & & \\
                    &  $n^{1/3} $ &100  & 0.2543545  &0.2514695 &  -0.2504805  \\
			&  &1000 & 0.2259553 & 0.2247011 &  -0.2247018\\
			& &10000&0.2049282 & 0.2014095 &  -0.2030098\\
			&  &100000&  0.1435502  & 0.1422800 &  -0.1428981\\
					\hline
\hline
                    &  & & & &  \\
			&  $n^{1/2}$ & 100 & 3.142922 &1.109797 & -1.836072  \\
			&  &1000 & 5.404810 & 1.700261 & -3.011534  \\
			&  &10000& 7.078401 &2.418188  & -4.132608\\
			&  &100000&   7.564632 &  2.512762& -4.359389  \\
			$\pi/3$ &  & & & &  \\
                   &  & & & & \\
			&  $n^{1/3} $ &100  & 0.6771219 &  0.2390985& -0.3955698\\
			&  &1000 &  0.5404810 &  0.1700261  & -0.3011534\\
			&  &10000& 0.3285503 & 0.1122424  &  -0.1918187\\
			&  &100000&  0.16297505  &  0.05413582& -0.09392019 \\
\hline \hline
                   &  & & & &  \\
			&  $n^{1/2}$ & 100 &  6.73971007 &  0.13244680 &  0.06656768 \\
			&  &1000 &  11.633011584  &  0.051489821  &  -0.047274111  \\
			&  &10000&   12.110197275  &   0.0139769985  &  -0.1955363777   \\
			&  &100000&   10.90404143 & 0.000736231  & -0.0112769835 \\
			$\pi/2$ &  & & & &  \\
                   &  & & & & \\
			&  $n^{1/3} $ &100  & 1.45202652 & 0.02853480 & 0.01434157\\
			&  &1000 &1.163301158  &  0.005148982  & -0.004727411\\
			&  &10000&  0.562105564 & 0.0006487548 & -0.0090759947 \\
			&  &100000&  0.2349204513  & 1.586162e-05 &  -2.429552e-04\\
\hline
		\end{tabular}}
\end{table}

Next, we establish rates of convergence for $\widehat{\alpha}_n$ and $\widehat{\Psi}_n$. 
To show that both  $\widehat{\alpha}_n$ and $\widehat{\Psi}_n$ inherit the  $n^{1/3}$ rate of convergence from the joint convergence established for the full estimator $\hat g_n(\, .\, )=\widehat{\Psi}_n(\widehat \alpha_n^T \cdot)$, some additional assumptions are needed.

\begin{list}{}
        {
        \setlength{\topsep}{6pt}
        \setlength{\parskip}{0pt}
        \setlength{\partopsep}{0pt}
        \setlength{\parsep}{0pt}
        \setlength{\itemsep}{3pt}
        \setlength{\leftmargin}{25pt}}
\item[(A5).] 
 There exists an interior point $z_0\in{\cal C}_{\alpha_0}$ such that $\Psi_0$ is continuously differentiable in the neighborhood of $z_0$, with $\Psi_0'(z_0)>0$.  
 
\item[(A6).]   The density of $X, q,$ is continuous on $\mathcal{X}$. 

\end{list}

Let  $\underline c =\inf \mathcal{C}_{\alpha_0}$ and $\overline c  = \sup \mathcal{C}_{\alpha_0}$. Our main result here is the following. It is proved in Section \ref{sec: proofCons} in the Supplement.

\begin{cor}\label{cor:rate_alpha}
Assume that  $\Psi_{0}$ is  non-constant and left-continuous with no discontinuity points at the boundaries of  ${\cal C}_{\alpha_{0}}$, that $\mc X$ has at least one interior point, and that (A1)-(A6) hold. Then,
$\|\hat\alpha_n-\alpha_0\|= O_p(n^{-1/3}).$ If moreover, $\Psi_0$ has a  derivative bounded from above on  ${\cal C}_{\alpha_0}$, then 
\begin{equation}\label{eq: cvpsi}
\left(\int_{\underline c+v_n}^{\overline c-v_n}(\hat \Psi_n(t)-\Psi_0(t))^2dt\right)^{1/2}=O_p(n^{-1/3})
\end{equation}
for all sequences $v_n$ such that $n^{1/3} v_n\to \infty$ and $\underline c+v_n\le \overline c-v_n$.
\end{cor}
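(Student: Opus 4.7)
\medskip

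\textbf{Proof plan.} The plan is to derive both rates from Theorem \ref{theo: rateL2} via the decomposition
\[
\hat g_n(x) - g_0(x) \,=\, \underbrace{\hat\Psi_n(\hat\alpha_n^T x) - \Psi_0(\hat\alpha_n^T x)}_{=:\, a_n(x)} \,+\, \underbrace{\Psi_0(\hat\alpha_n^T x) - \Psi_0(\alpha_0^T x)}_{=:\, b_n(x)}.
\]
Using (A5), (A4), (A6) and the convexity of $\mathcal X$ from (A1), I first locate an interior point $x_0\in\mathcal X$ with $\alpha_0^T x_0 = z_0$ and $q(x_0)>0$, and pick $r>0$ such that $B(x_0,r)\subset\mathcal X$, $q\ge q(x_0)/2$ on $B(x_0,r)$, and $\Psi_0\in C^1$ with $\Psi_0'\ge \Psi_0'(z_0)/2>0$ on an interval around $z_0$ that contains $\alpha_0^T x$ for every $x\in B(x_0,r)$. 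By Theorem \ref{theo: consistency}, that interval also contains $\hat\alpha_n^T x$ with probability tending to one, and the mean value theorem then gives $|b_n(x)|\ge \tfrac12\Psi_0'(z_0)\,|(\hat\alpha_n-\alpha_0)^T x|$ on $B(x_0,r)$.

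\medskip

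For the index rate, writing $\beta_n:=\hat\alpha_n-\alpha_0$ and exploiting the ball symmetry identity $\int_{B(x_0,r)}(\beta_n^T x)^2\,dx \ge c_d\, r^{d+2}\|\beta_n\|^2$, the pointwise lower bound integrates to
\[
\int_{B(x_0,r)} b_n^2\,d\QQ \,\geq\, C_*\,\|\beta_n\|^2
\]
for some $C_*>0$. Combining this with Theorem \ref{theo: rateL2} and $b_n^2\le 2(\hat g_n-g_0)^2+2a_n^2$ yields
\[
C_*\,\|\beta_n\|^2 \,\leq\, O_p(n^{-2/3}) \,+\, 2\int_{B(x_0,r)} a_n^2\,d\QQ.
\]
The delicate step is to show that the right-hand remainder is dominated by $n^{-2/3}$. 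For this, I plan to pass to the variable $z=\hat\alpha_n^T x$, which converts $\int a_n^2\,d\QQ$ into $\int(\hat\Psi_n(z)-\Psi_0(z))^2 q_{\hat\alpha_n}(z)\,dz$, and then relate this back to $\int(\hat g_n-g_0)^2\,d\QQ$ through the ``jump'' term $c_n(x):=\hat\Psi_n(\hat\alpha_n^T x)-\hat\Psi_n(\alpha_0^T x)$. Since $\hat\Psi_n$ is monotone with stochastically bounded total variation (by (A2)--(A3)) and the set $\{c_n\ne 0\}$ has $\QQ$-measure controlled by $\|\beta_n\|$, carefully tracking these contributions should close the loop and yield $\|\hat\alpha_n-\alpha_0\|=O_p(n^{-1/3})$; this is where the main technical work lies.

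\medskip

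For the ridge function rate, the added hypothesis $|\Psi_0'|\le M$ on $\mathcal C_{\alpha_0}$ supplies a cleaner global bound $|b_n(x)|\le MR\|\hat\alpha_n-\alpha_0\|$ with $R=\sup_{x\in\mathcal X}\|x\|$, so $\int b_n^2\,d\QQ \le M^2R^2\|\hat\alpha_n-\alpha_0\|^2 = O_p(n^{-2/3})$ by the first part. Combining $a_n^2\le 2(\hat g_n-g_0)^2+2b_n^2$ with Theorem \ref{theo: rateL2} then yields $\int a_n^2\,d\QQ=O_p(n^{-2/3})$. Passing to $z=\hat\alpha_n^T x$ and invoking the lower bound $q_{\hat\alpha_n}\ge\underline q$ from (A4), I obtain $\int_{\mathcal C_{\hat\alpha_n}}(\hat\Psi_n-\Psi_0)^2\,dz \le \underline q^{-1}\int a_n^2\,d\QQ = O_p(n^{-2/3})$. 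Finally, $\|\hat\alpha_n-\alpha_0\|=O_p(n^{-1/3})$ forces both endpoints of $\mathcal C_{\hat\alpha_n}$ to lie within $R\|\hat\alpha_n-\alpha_0\|=O_p(n^{-1/3})$ of $\underline c$ and $\overline c$ respectively, so the assumption $n^{1/3}v_n\to\infty$ ensures $[\underline c+v_n,\overline c-v_n]\subset\mathcal C_{\hat\alpha_n}$ with probability tending to one, which delivers \eqref{eq: cvpsi}.
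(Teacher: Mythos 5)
Your decomposition $\hat g_n-g_0=a_n+b_n$, the localisation to a ball around a point $x_0$ with $\alpha_0^Tx_0=z_0$ via (A5)--(A6), the lower bound $\int_{B(x_0,r)}b_n^2\,d\QQ\gtrsim\|\hat\alpha_n-\alpha_0\|^2$, and your entire second part (bounded derivative of $\Psi_0$, the lower bound $\underline q$ from (A4), and the $v_n$ condition to fit $[\underline c+v_n,\overline c-v_n]$ inside ${\cal C}_{\hat\alpha_n}$) all match the paper's argument. The gap is exactly where you flag ``the main technical work'': from Theorem \ref{theo: rateL2} you only know $\int_B(a_n+b_n)^2\,d\QQ=O_p(n^{-2/3})$, and writing $b_n^2\le 2(\hat g_n-g_0)^2+2a_n^2$ leaves you needing $\int_B a_n^2\,d\QQ$ to be $O_p(n^{-2/3})$ (or at least $o_p(\|\hat\alpha_n-\alpha_0\|^2)$ plus such a term). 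That cannot be obtained by the route you sketch. The danger is cancellation: since $a_n$ is a function of $\hat\alpha_n^Tx$ only, $\hat\Psi_n$ could in principle absorb part of the index error, making $a_n\approx-b_n$ pointwise while $\hat g_n-g_0$ stays small; this scenario is perfectly compatible with $\hat\Psi_n$ being monotone, with few or small jumps, so controlling the total variation of $\hat\Psi_n$ and the measure of $\{c_n\neq 0\}$ cannot rule it out. (In addition, two of your auxiliary claims are off: (A2)--(A3) give $\sup|\hat\Psi_n|=O_p(\log n)$ via Lemma \ref{lem: LSElog}, not bounded total variation; and $\{c_n\neq 0\}$ is a union over the $O(n)$ jump points of $\hat\Psi_n$ of slabs of width $O(\|\hat\alpha_n-\alpha_0\|)$, so its $\QQ$-measure is not controlled by $\|\hat\alpha_n-\alpha_0\|$ alone.)

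What is actually needed — and what the paper supplies — is a quantitative statement that only \emph{part} of $b_n$ can be compensated by a function of $\hat\alpha_n^Tx$. The paper invokes Lemma 5.7 of \cite{murphy1999current}: if $(E[g_1g_2])^2\le c\,E[g_1^2]E[g_2^2]$ with $c<1$, then $E[(g_1+g_2)^2]\ge(1-\sqrt c)(E[g_1^2]+E[g_2^2])$. Taking $g_1=a_n$, $g_2=b_n$ on the localised ball, conditioning on $\hat\alpha_n^TX$ and using Cauchy--Schwarz reduces the problem to showing $\limsup_n c_n<1$, where $c_n$ is the ratio of the conditional-expectation variance of $b_n$ given $\hat\alpha_n^TX$ to the full variance. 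That in turn uses the Taylor expansion from (A5), the convergence $E[X\mid\hat\alpha_n^TX]\to E[X\mid\alpha_0^TX]$ (Lemma \ref{ConvCondExp}, which is where (A6) enters), and the fact that the matrix $E[(X-E[X\mid\alpha_0^TX])(X-E[X\mid\alpha_0^TX])^T]$ is nonsingular on the orthogonal complement of $\alpha_0$ because $X$ has an everywhere-positive density — so the direction $\gamma=\lim(\hat\alpha_n-\alpha_0)/\|\hat\alpha_n-\alpha_0\|$, being orthogonal to $\alpha_0$, cannot be fully explained by $\hat\alpha_n^TX$. Once $(1-\sqrt{c_n})^{-1}=O_p(1)$ is in hand, both $\int b_n^2\,d\QQ$ and $\int a_n^2\,d\QQ$ are separately $O_p(n^{-2/3})$ and your remaining steps (including the second part, which otherwise coincides with the paper's) go through. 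Without an argument of this type, the first claim of the corollary is not established.
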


\begin{rem}\label{rem:AEconcern2}The above result holds under Assumption (A1) on the support of the covariate $X.$  The result can be made stronger under additional regularity conditions on the support $\mc X$.  For example, when $\mathcal X$ is a ball in $\RR^d$ centered at the origin and of radius~$r$ then the above result holds with $v_n=0.$  Indeed, in this setting the support of the linear predictor $\alpha^TX,$ for any $\alpha,$  is  $[-r,r]$.  Therefore, in the proof, ${\cal C}_{\hat\alpha_n}=[-r,r]$ and hence $v_n=0, \bar c=r$ and $\underline c=-r$ in inequality \eqref{line:result2} of the Supplement.  Notably, \citet{kakade2011efficient} consider this choice of $\mc X$ with $r=1.$   
\end{rem}

\medskip

The $n^{1/3}$-rate obtained  in Corollary \ref{cor:rate_alpha}  for convergence of the LSE $\hat{\alpha}_n$  towards the truth raises the question whether this convergence actually occurs at a faster rate, for example $n^{1/2}$. In order to investigate this question, we have performed simulations for $d=2$ and two different monotone single index models:  the first one is a Gaussian model where  $Y \sim \mathcal{N}((\alpha_0^T X)^3, 1)$, whereas the second one  is a logistic regression model where $Y \sim \text{Bin}(10, \exp(\alpha_0^T X) (1+\exp(\alpha_0^T X))^{-1})$.   In both settings, the two-dimensional covariate $X \sim \mathcal{U}[0,1] \times \mathcal{U}[0,1]$ and $\alpha_0 = (\cos(\theta_0), \sin(\theta_0))^T$ with $\theta_0 \in \{\pi/4, \pi/3, \pi/2\}$.  From each of these monotone single index models we have drawn  $100$ times $n$ i.i.d. pairs $(X_i, Y_i)$ and computed the LSE $\hat{\alpha}_n$  for $n \in \{10^2, 10^3, 10^4, 10^5 \}$. Based on these $100$ 
replications  we computed the empirical estimates for the covariance matrix of $n^{1/3}(\hat{\alpha}_n - \alpha_0)$ and  $n^{1/2}(\hat{\alpha}_n - \alpha_0)$. The main idea behind is that the correct rate of convergence should yield estimates that are more or less stable for large $n$. Our simulation results for the Gaussian and logistic model are reported in Table \ref{TableGauss} and Table \ref{TableLogistic} respectively. For the settings we have chosen, the variances $\widehat{\sigma}^2_{11}$ and $\widehat{\sigma}^2_{22}$ as well as the absolute value of the covariance $\vert \widehat{c}_{12} \vert$ seem to increase with the sample size $n$ if $n^{1/2}$ is the stipulated rate of convergence. This picture is completely reversed for the rate $n^{1/3}$.  
This first investigation can only allow us to conclude (for the chosen models, true monotone link functions and indices) that the convergence of our LSE occurs at a rate that is faster than $n^{1/3}$ and slower than $n^{1/2}$. 
It is clear that more computations are needed to get a better idea about the actual convergence rate for a general dimension $d \ge 2$.  Proving the exact rate of convergence of $\hat \alpha_n$ is an interesting question but goes beyond the scope of this work. We believe that in establishing this exact rate, under suitable assumptions, it is necessary to overcome the difficulty of non-smoothness of  $\hat{\Psi}_n$, the monotone estimator of the true link function  $\Psi_0$ and the fact that $\hat {\alpha}_n$ and $\hat{\Psi}_n$ are intertwined. Consequently, a useful device such as Taylor expansion  cannot be used. Also, when this $\hat{\Psi}_n$ converges at the cubic rate (as it is the case under our assumptions), it is not immediate how to show that $\hat \alpha_n$ converges at a faster rate as both $\Psi_n$ and $\hat \alpha_n$ depend on each other. We intend to investigate these questions in a future work.

 \begin{table}[!ht]
	\centering
	\caption{Values of the empirical covariances matrices for the case $d=2$  of $n^{1/2} (\widehat{\alpha}_n - \alpha_0)$ and   $n^{1/3} (\widehat{\alpha}_n - \alpha_0)$  with entries $\widehat{\sigma}^2_{11}, \widehat{\sigma}^2_{22}, \widehat{c}_{12} = \widehat{c}_{21}$.   The sample size $n \in \{10^2, 10^3, 10^4, 10^5\}$ and $\alpha_0 = (\cos(\theta_0), \sin(\theta_0))^T$ with $\theta_0 \in \{\pi/4, \pi/3, \pi/2\}$. The obtained  estimates were computed based on $100$ replications  and the model  $Y \sim \text{Bin}(10, \exp(\alpha^T_0 X)/(1+ \exp(\alpha^T_0 X))$ and $X \sim \mathcal{U}[0,1] \times \mathcal{U}[0,1] $.   }
	\label{TableLogistic}
	\vspace{0.5cm}
	\scalebox{0.99}{
		\begin{tabular}{|clr|ccc |}
			\hline
			$\theta_0$ &  Rate & $n$ & $\widehat \sigma^2_{11}$ & $\widehat\sigma^2_{22}$ & $\widehat{c}_{12}$ \\
			\hline
                   &  & & & &   \\
			& $n^{1/2}$ & 100 & 1.291103 & 1.330393 & -1.300230 \\
			& &1000 & 2.981480 & 2.977000  & -2.97093 \\
			& &10000& 5.718460 &5.600101 & -5.654135   \\
			& &100000& 7.1398327  &7.1803650 &  -7.1591489 \\
			$\pi/4$ &  & & & &  \\
			 &  & & & & \\
                    &  $n^{1/3} $ &100  & 0.2781598   &0.2866244 & 0.2801261 \\
			&  &1000 & 0.2981480 & 0.2977000 &  -0.2970937\\
			& &10000  &0.2654274  &0.2599337 &  -0.2624417\\
			&  &100000&  0.1538230  & 0.154696 & -0.1542392\\
		   \hline \hline
                     &  & & & &  \\
			&  $n^{1/2}$ & 100 & 3.398950 &  1.082636 & -1.884304 \\
			&  &1000 &  5.250277 & 1.788438  & -3.047276   \\
			&  &10000   & 9.769384 & 3.307230   &   -5.675153\\
			&  &100000&   13.0589943 & 4.42027715 & -7.59624187  \\
			$\pi/3$ &  & & & &  \\
                   &  & & & & \\
			&  $n^{1/3} $ &100  & 0.7322815 & 0.2332469 & -0.4059610\\
			&  &1000 &  0.5250277 &0.1788438  & -0.3047276\\
			&  &10000&  0.4534546 & 0.1535080  &  -0.2634173 \\
			&  &100000&  0.2813475  & 0.09523198 &  -0.16365607\\
\hline
\hline
 &  & & & &  \\
			&  $n^{1/2}$ & 100 & 6.68905651 & 0.12717579 &  -0.04501911 \\
			&  &1000 &  7.582699  &  0.02960793 &    -0.12527002\\
			&  &10000& 9.22021866   &  0.004287974   &  -0.014856708   \\
			&  &100000&   13.35415000  & 0.0008918812    & 0.0076945903 \\
			$\pi/2$ &  & & & &  \\
                   &  & & & & \\
			&  $n^{1/3} $ &100  & 1.441113539 & 0.027399193 & -0.009699074\\
			&  &1000 &  0.7582699   &  0.002960793  & -0.012527002\\
			&  &10000&  0.4279646395 & 0.0001990301 &  -0.0006895873\\
			&  &100000&  2.877064e-01  & 0.0000192150 & 0.0001657749 \\
\hline
		\end{tabular}}
\end{table}	

\section{Convergence of alternative estimators}\label{sec: cvalternative}  
We now consider convergence of plug-in estimators of 
Section \ref{sec: plug}: we randomly split the sample  into two independent sub-samples of respective sizes $n_1$ and $n_2$, where $n_1$ is the integer part of $pn$ for some fixed $p\in(0,1)$ and $n_2=(1-p)n$,  we compute 
an index estimator $\tilde\alpha_{n}$ based on the first sub-sample, and then compute $\tilde\Psi_n$, the minimizer of \eqref{eq: htilde}  over $\Psi \in \mathcal{M}$, where $\alpha=\tilde \alpha_n$ and $\{(X_i,Y_i),i\in I_2\}$ are the observations from the second sub-sample.
 Note that arguing conditionally on the first sub-sample, $\tilde\alpha_{n}$ can be considered as non-random when studying the limiting behavior of 
$\tilde\Psi_n$. In the sequel, we set $\tilde g_n(x)=\tilde\Psi_n(\tilde\alpha_{n}^Tx)$ for all $x\in\RR^d$. We prove below that, provided that $\tilde\alpha_{n}$ converges at the $n^{1/3}$-rate (which is the case of the linear estimator of Section \ref{sec:linear_est} and some estimators from Section \ref{sec: additional} under appropriate assumptions), $\tilde g_n$ also converges at the same rate. The complete proof is given in Subsection \ref{sec: alternativeconsistency} of the Supplementary Material. Below, we implicitly assume that $\alpha_0$ is identifiable.

\begin{theorem} \label{theo: alternativeconsistency}
Assume (A1)-(A4).  Assume, moreover, that  $\Psi_{0}$ is non-constant and Lipschitz continuous, and that 
$\tilde\alpha_{n}=\alpha_0+O_p( n^{-1/3}).$

With $\tilde g_n$ as above we then have 
\begin{equation}\label{eq: alternativecvg}
\left(\int_{\mathcal{X}} \left(\tilde{g}_n(x)-g_0(x)\right)^2 dx\right)^{1/2}  = O_p(n^{-1/3}).
\end{equation}
Furthermore, if (A1)-(A6) hold, and $\Psi_{0}$ has a first derivative that is bounded from above on  ${\cal C}_{\alpha_0}$, then 
\begin{equation}\label{eq: alternativecvpsi}
\left(\int_{\underline c+v_n}^{\overline c-v_n}(\tilde \Psi_n(t)-\Psi_0(t))^2dt\right)^{1/2}=O_p(n^{-1/3})
\end{equation}
for all sequences $v_n$ such that ${n^{1/3} v_n}\to \infty$ and $\underline c+v_n\le \overline c-v_n$.
\end{theorem}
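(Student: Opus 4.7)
The plan is to condition on the first sub-sample so that $\tilde\alpha_n$ becomes non-random, reducing the problem to a (mildly misspecified) isotonic regression on the transformed covariates $\{\tilde\alpha_n^TX_i: i\in I_2\}$. Introduce the oracle $g_{\tilde\alpha}(x):=\Psi_0(\tilde\alpha_n^Tx)$ and decompose
\begin{equation*}
\tilde g_n-g_0=(\tilde g_n-g_{\tilde\alpha})+(g_{\tilde\alpha}-g_0).
\end{equation*}
The ``approximation'' term $g_{\tilde\alpha}-g_0$ is purely deterministic once $\tilde\alpha_n$ is fixed; by the Lipschitz continuity of $\Psi_0$ with some constant $L$, the boundedness of $\mc X$ from (A1) with diameter $R$, and the assumed rate on $\tilde\alpha_n$, one has $\|g_{\tilde\alpha}-g_0\|_{L_2(\QQ)}\le LR\|\tilde\alpha_n-\alpha_0\|=O_p(n^{-1/3})$.

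For the ``stochastic'' term $\tilde g_n-g_{\tilde\alpha}$, I would invoke the basic inequality for the least squares estimator $\tilde\Psi_n$ (with $\Psi_0$ itself being an admissible monotone competitor), which yields
\begin{equation*}
\sum_{i\in I_2}\bigl(\tilde\Psi_n(\tilde\alpha_n^TX_i)-\Psi_0(\tilde\alpha_n^TX_i)\bigr)^2 \le 2\sum_{i\in I_2}\bigl(Y_i-\Psi_0(\tilde\alpha_n^TX_i)\bigr)\bigl(\tilde\Psi_n-\Psi_0\bigr)(\tilde\alpha_n^TX_i).
\end{equation*}
Split $Y_i-\Psi_0(\tilde\alpha_n^TX_i)=\varepsilon_i+\delta_i$, where $\varepsilon_i=Y_i-g_0(X_i)$ is a centered noise and $\delta_i=\Psi_0(\alpha_0^TX_i)-\Psi_0(\tilde\alpha_n^TX_i)$ is uniformly of order $O_p(n^{-1/3})$. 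Cauchy--Schwarz combined with the AM--GM inequality absorbs the $\delta$-contribution into the left-hand side at cost $O_p(n^{-2/3})$. The $\varepsilon$-contribution is then a standard modulus-of-continuity bound: \emph{conditionally} on the first sub-sample, $\tilde\alpha_n$ is frozen, the class $\{(\Psi-\Psi_0)(\tilde\alpha_n^T\cdot):\Psi\in{\cal M},\ \|\Psi\|_\infty\le K\}$ has bracketing $L_2$-entropy of order $1/\eps$, and the tail condition (A3) supplies the required exponential-moment control on $\varepsilon$. A peeling argument of the same flavor as in the proof of Theorem~\ref{theo: rateL2} then yields $n_2^{-1}\sum_{i\in I_2}(\tilde\Psi_n-\Psi_0)^2(\tilde\alpha_n^TX_i)=O_p(n^{-2/3})$, and the transfer to $L_2(\QQ)$ follows from standard ratio inequalities over the same bounded monotone class. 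Combining both terms gives \eqref{eq: alternativecvg}.

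The second statement is then obtained as in the proof of Corollary~\ref{cor:rate_alpha}. Assumption (A4) provides a lower bound $\underline q>0$ on the density of $\tilde\alpha_n^TX$ on $\mc C_{\tilde\alpha_n}$, so
\begin{equation*}
\underline q\int_{\mc C_{\tilde\alpha_n}}\bigl(\tilde\Psi_n(t)-\Psi_0(t)\bigr)^2\,dt\le\int_{\mc X}\bigl(\tilde\Psi_n(\tilde\alpha_n^Tx)-\Psi_0(\tilde\alpha_n^Tx)\bigr)^2\,d\QQ(x)=O_p(n^{-2/3}),
\end{equation*}
using the bound on the stochastic term from the previous paragraph together with the Lipschitz control on $g_{\tilde\alpha}-g_0$. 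Since $|\mc C_{\tilde\alpha_n}\triangle\mc C_{\alpha_0}|$ is controlled by $R\|\tilde\alpha_n-\alpha_0\|=O_p(n^{-1/3})$, the truncated interval $[\underline c+v_n,\overline c-v_n]$ lies inside $\mc C_{\tilde\alpha_n}$ with probability tending to one whenever $n^{1/3}v_n\to\infty$, establishing \eqref{eq: alternativecvpsi}.

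The main obstacle, as I see it, is the empirical-process step for the $\varepsilon$-contribution: the ``true'' conditional regression on the transformed covariate, $t\mapsto \EE[g_0(X)\mid\tilde\alpha_n^TX=t]$, is not in general monotone, so the problem is genuinely misspecified. The saving feature is that after conditioning on the first sub-sample $\tilde\alpha_n$ is deterministic, so the entropy calculations reduce to the fixed class $\mathcal M$ of bounded monotone functions (independent of $\tilde\alpha_n$), and the misspecification bias enters only through the $\delta$-term, whose size $O_p(n^{-1/3})$ is already compatible with the target rate.
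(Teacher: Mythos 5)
Your outline is essentially the paper's argument in a different bookkeeping. The paper also takes $\tilde g_0(x)=\Psi_0(\tilde\alpha_n^Tx)$ as the monotone competitor, shows $\MM_n\tilde g_n\ge\MM_n\tilde g_0\ge \MM_n g_0-O_p(n^{-2/3})$ (your $\delta$-term and the conditional CLT-variance bound appear verbatim there), and then feeds this ``near-maximizer relative to $g_0$'' property into Lemma~\ref{lem: basic} and Theorem~3.2.5 of \cite{vanweak}. The one structural advantage of that route over yours is that the rate theorem controls the \emph{population} distance $D(\tilde g_n,g_0)$ directly, so the paper never needs the empirical-to-$L_2(\QQ)$ transfer (``standard ratio inequalities'') that your basic-inequality formulation requires as an extra step. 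Your treatment of \eqref{eq: alternativecvpsi} matches the paper's, which simply reruns the end of the proof of Corollary~\ref{cor:rate_alpha} using only $\tilde\alpha_n=\alpha_0+O_p(n^{-1/3})$.

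The place where your sketch is genuinely too thin is the stochastic term. You write the entropy bound for ``$\Psi\in\mathcal M$, $\|\Psi\|_\infty\le K$'' as if $K$ were a constant, but $\tilde\Psi_n$ is only bounded by $\max_{i\in I_2}|Y_i|=O_p(\log n)$ (Lemma~\ref{lem: LSElog}), so the envelope must be taken as $K=C\log n$ and the entropy integral then carries powers of $\log n$; a single peeling pass of the kind you describe yields only $O_p(n^{-1/3}(\log n)^{5/3})$, not $O_p(n^{-1/3})$. Removing the logarithm is exactly where (A4)'s \emph{lower} bound $\underline q$ enters: one must split $g=\bar g+(g-\bar g)$ as in \eqref{decomp}, use that a function within $L_2(\QQ)$-distance $v$ of $g_0$ can exceed $2K_0$ only on a set of Lebesgue measure $O(v^2/\underline q)$ (Lemma~\ref{lem: entropyg-barg}), and run the two-stage argument of Proposition~\ref{lem:centproc2} with the crude rate from the first stage feeding the second. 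Since you assume (A1)--(A4) and point to ``the same flavor as Theorem~\ref{theo: rateL2}'', the tools are all available to you, but as written the proposal does not acknowledge that this truncation/bootstrapping step is the actual content of the proof rather than a routine peeling argument.
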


\begin{rem}\label{rem:AEconcern2_v2} 
Similarly to Remark~\ref{rem:AEconcern2},  the result can be made stronger under additional regularity conditions on the support $\mc X$.   Moreover, similar to Remark \ref{rem :subG}, if instead of assuming that $X$ has a bounded support we assume that it has a sub-Gaussian distribution, then the rate of convergence is only inflated by the factor $(\log n)^{5/3}$. 
\end{rem}



\section{Proof of Theorem \ref{theo: rateL2}}\label{sec: proofs}



As the proof of Theorem \ref{theo: rateL2} is quite long and technical, we first give the main ideas of the proof of this theorem in Subsection \ref{sec: mainsteps} below. Here, we give two preparatory lemmas and an intermediate rate theorem (Theorem \ref{theo: ratelog}).  The latter compares to Theorem \ref{theo: rateL2} but with an additional $\log n$ term in the rate of convergence.  The proof of Theorem \ref{theo: ratelog} requires entropy results that are described in Subsection \ref{sec: entropy} and proved in  subsequent subsections. The proof of Theorem \ref{theo: rateL2} is finally completed in Subsection \ref{proof: ratewithoutlog}.

\subsection{The main steps of the proof of Theorem~\ref{theo: rateL2}}\label{sec: mainsteps}

By definition of the LSE, $\widehat g_n$ maximizes the criterion
\begin{eqnarray}\label{eq: Mn}
\mathbb M_ng&:=&\frac 1n\sum_{i=1}^n  \Big \{Y_ig(X_i)-\frac{g(X_i)^2}{2}  \Big\}
\end{eqnarray}
over the set of all functions $g$ of the form $g(x)=\Psi(\alpha^Tx), x \in \mathcal{X}$ with $\alpha\in \mathcal S_{d-1}$ and $\Psi\in\mathcal M$.   It would have been easier to prove Theorem \ref{theo: rateL2} using standard results from empirical process theory if  the LSE  were known to be bounded in probability  by some constant which is independent of $n$. Unfortunately, we do not know whether this  holds true.    Instead, the following lemma can be established (see Subsection \ref{sec: proofBounded} for a proof).

\begin{lemma}\label{lem: LSElog}
We have 
\begin{equation}\label{eq: minmax}
\min_{1\le k\le n}Y_k \leq  \hat g_n(x)\leq \max_{1\le k\le n}Y_k
\end{equation}
for all $x\in\cal X$.
Moreover, under assumptions (A2) and (A3) we have
\begin{eqnarray*}
\sup_{x\in \mathcal{X}}| \widehat g_n (x)| & \leq & \max_{1\le k\le n}|Y_k| \ \ = \ \ O_p (\log n).
\end{eqnarray*}
\end{lemma}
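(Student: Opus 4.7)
The plan is to split the lemma into two claims that can be argued independently: the deterministic pointwise sandwich \eqref{eq: minmax}, which is a structural consequence of the isotonic-regression form of the LSE, and the $O_p(\log n)$ rate of $\max_k |Y_k|$, which follows from a standard sub-exponential tail argument based on the Bernstein-type bound (A3).

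For the sandwich, I would condition on $\hat\alpha_n$ and the induced permutation $\hat\pi_n$. The characterisation in Theorem~\ref{theo: caractLSE2} shows that $\hat\Psi_n(\hat\alpha_n^T x_{\hat\pi_n(k)})=d_k^{\hat\pi_n}$, where $d_k^{\hat\pi_n}$ is the $k$-th left derivative of the greatest convex minorant of the weighted cumulative sum diagram built from $(\tilde n_{\hat\pi_n(k)},\tilde y_{\hat\pi_n(k)})$. The classical min--max formula for isotonic regression,
\[
d_k^{\hat\pi_n} \;=\; \max_{i\le k}\,\min_{j\ge k}\,\frac{\sum_{l=i}^j \tilde n_{\hat\pi_n(l)}\,\tilde y_{\hat\pi_n(l)}}{\sum_{l=i}^j \tilde n_{\hat\pi_n(l)}},
\]
expresses each $d_k^{\hat\pi_n}$ as a weighted mean of consecutive $\tilde y_{\hat\pi_n(l)}$'s, each of which is itself an average of original $Y_i$'s by \eqref{eq: tilden}. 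Hence $\min_i Y_i\le d_k^{\hat\pi_n}\le \max_i Y_i$ for every $k$. Since by the chosen extension $\hat\Psi_n$ is left-continuous and piecewise constant with possible jumps only at the $\hat\alpha_n^T X_i$'s and constant outside the convex hull of these points, every value $\hat g_n(x)=\hat\Psi_n(\hat\alpha_n^T x)$ for $x\in\mathcal X$ equals one of the $d_k^{\hat\pi_n}$'s, and \eqref{eq: minmax} follows.

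For the tail bound, the tower property and (A3) give $E|Y|^s\le a_0\, s!\,M^{s-2}$ for every $s\ge 2$; in particular $E|Y|$ is finite (by Cauchy--Schwarz from the $s=2$ bound, or directly from (A2)). Expanding the exponential and summing the resulting geometric series yields
\[
E\, e^{\lambda|Y|} \;\le\; 1 + \lambda E|Y| + \frac{a_0\lambda^2}{1-\lambda M} \;<\; \infty
\]
for every $\lambda\in(0,1/M)$, so Markov's inequality gives $P(|Y|>t)\le C\, e^{-\lambda t}$ for a constant $C=C(\lambda,a_0,M)$. A union bound over the $n$ i.i.d. copies then produces $P(\max_k|Y_k|>C'\log n)\le C n^{1-\lambda C'}\to 0$ whenever $C'>1/\lambda$, so $\max_k|Y_k|=O_p(\log n)$. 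Neither step is deep; the only point requiring real care is the specific left-continuous extension convention for $\hat\Psi_n$ used in the first part, which is why that convention is fixed in the paragraph immediately preceding the lemma.
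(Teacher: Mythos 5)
Your proof is correct and follows essentially the same route as the paper: the sandwich \eqref{eq: minmax} rests on the averaging characterization of isotonic regression (the paper uses the running-mean upper and lower bounds for the GCM slopes, you use the equivalent min--max formula, and both then pass to all of $\mathcal X$ via the piecewise-constant convention), and the $O_p(\log n)$ claim is the same sub-exponential tail plus union bound, the only difference being that you bound $E\,e^{\lambda|Y|}$ directly from (A3) while the paper centers $Y$ at $g_0(x)$ and invokes Lemma 2.2.11 of \cite{vanweak}. One small aside: (A2) bounds $E(Y\mid X)$, not $E|Y|$, so of your two justifications for $E|Y|<\infty$ only the Cauchy--Schwarz one from the $s=2$ moment bound is valid --- but that one suffices.
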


Note that under the more restrictive  assumption that $Y$ is a bounded random variable (so that $\max_k|Y_k|$ is  bounded), we obtain that $\hat g_n$  is also bounded. This is the case for instance in the current status model which, as explained in the introduction, is a special case of the model we consider with  $Y\in\{0,1\}$. For this reason, the arguments developed in \cite{groeneboom2016current}  in the current status model cannot be directly adapted to our setting.

Now it follows from Lemma \ref{lem: LSElog} that, with arbitrarily large probability, $\widehat g_n$ maximizes $\MM_n g$ over the set of all functions $g$ that are bounded in absolute value by $C\log n$ for some appropriately chosen $C > 0$,  and take  the form $g(x)=\Psi(\alpha^Tx), x \in \mathcal{X}$ with $(\alpha, \Psi) \in \mathcal S_{d-1}  \times \mathcal M$. Denote by $\PP_n$ the empirical distribution corresponding to $(X_1,Y_1),\dots,(X_n,Y_n)$, and  let $\hat f_n(x,y)=y\hat g_n(x)-\hat g_n^2(x)/2$ for $x\in\mc X$ and $y\in\RR$. Since $\MM_ng=\PP_nf$ with
\begin{equation}\label{eq: gtof}
f(x,y)=yg(x)-g^2(x)/2
\end{equation}
for all $x\in\mc X,y\in\RR$,  this means that,  with arbitrarily large probability,   $\widehat f_n$ maximizes
$\PP_nf$ over the set of all functions $f$ of the form \eqref{eq: gtof} for some function $g$ that is bounded in absolute value by $C\log n$ and takes  the form $g(x)=\Psi(\alpha^Tx), x \in \mathcal{X}$ with $(\alpha, \Psi) \in \mathcal S_{d-1}  \times \mathcal M$. Hence, classical arguments for maximizers of the empirical process over a class of functions (where $g$ can be assumed to be bounded by $C\log n$) can be used  to compute the rate of convergence of the estimator. This requires bounds for the entropy of the class of functions $f$ of the form \eqref{eq: gtof} together with a basic inequality that makes the connection between the mean of $\MM_ng-\MM_ng_0$ and a distance between $g$ and $g_0$. The entropy bounds are given in Section \ref{sec: entropy} below whereas the basic inequality is given in the following lemma, which is proved in Subsection \ref{proof: basic}.
For each bounded function $g:\mc X\to\RR$, we define $\mathbb{Q}g=\int gd\mathbb Q$ and $\MM g=\PP f$ where $f$ is given by \eqref{eq: gtof} and  $\PP f=\int fd\PP$, which means that $\MM g$ is the expected value of $\MM_ng$:
\begin{equation}\label{eq: M}
\mathbb{M}g=  \int_{\mathcal{X} \times \RR} \Big \{ y g(x)  - \frac{g^2(x)}{2}  \Big\} {d\mathbb P(x,y)}. 
\end{equation}
\begin{lemma}\label{lem: basic}
Let $g:\mc X\to\RR$ with $\mathbb{Q}g^2<\infty$. Then,
$
\mathbb Mg-\mathbb Mg_0 \le  -   D^2(g,g_0)/2$
where
\begin{equation}\label{eq: defD}
D(g,g_0)= \left( \int_{\mathcal{X}}  (g(x) - g_0(x))^2 d\mathbb{Q}(x) \right)^{1/2}.
\end{equation}
\end{lemma}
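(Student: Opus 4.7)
The lemma is essentially an algebraic identity that exploits the model assumption $E[Y\mid X] = g_0(X)$ to reduce the functional $\mathbb{M}$ from an integral against the joint law $\mathbb{P}$ to one against the marginal $\mathbb{Q}$. My plan is to first apply the tower property to the cross term. Namely, I would write
\[
\int y\,g(x)\,d\mathbb{P}(x,y) \;=\; E[Y\,g(X)] \;=\; E\bigl[E[Y\mid X]\,g(X)\bigr] \;=\; \int g_0(x)\,g(x)\,d\mathbb{Q}(x),
\]
which converts \eqref{eq: M} into
\[
\mathbb{M}g \;=\; \int_{\mathcal{X}} \Bigl\{ g_0(x)\,g(x) - \tfrac{1}{2}\,g^2(x) \Bigr\}\,d\mathbb{Q}(x).
\]

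Specializing to $g=g_0$ gives $\mathbb{M}g_0 = \tfrac{1}{2}\int g_0^2\,d\mathbb{Q}$, and then subtraction together with completing the square yields
\[
\mathbb{M}g - \mathbb{M}g_0 \;=\; \int \Bigl\{ g_0 g - \tfrac{1}{2}g^2 - \tfrac{1}{2}g_0^2 \Bigr\}\,d\mathbb{Q} \;=\; -\tfrac{1}{2}\int (g-g_0)^2\,d\mathbb{Q} \;=\; -\tfrac{1}{2}\,D^2(g,g_0).
\]
In fact I would obtain equality, which is stronger than the stated inequality.

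The only point that requires attention is integrability of the various terms. Using $\mathbb{Q}g^2<\infty$ (the hypothesis of the lemma) together with boundedness of $g_0$ from assumption (A2), Cauchy--Schwarz on $L^2(\mathbb{Q})$ handles $\int g_0 g\,d\mathbb{Q}$ and $\int g_0^2\,d\mathbb{Q}$. For the joint integral $\int y g(x)\,d\mathbb{P}(x,y)$, I would first argue that $E[Y^2]$ is finite: assumption (A3) applied with $s=2$ gives $\int y^2\,d\mathbb{P}_x(y)\le 2a_0$ uniformly in $x$, whence $E[Y^2]\le 2a_0$; then Cauchy--Schwarz in $L^2(\mathbb{P})$ bounds $E|Y g(X)|$ by $(E[Y^2])^{1/2}(\mathbb{Q}g^2)^{1/2}<\infty$, and Fubini legitimizes the conditioning step.

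There is no serious obstacle in this argument; the content of the lemma is purely the identification of $\mathbb{M}g-\mathbb{M}g_0$ with $-\tfrac{1}{2}D^2(g,g_0)$, which is standard for least-squares contrast functionals under a correctly specified mean. All the work in Theorem~\ref{theo: rateL2} is then pushed into controlling the empirical process $(\mathbb{M}_n-\mathbb{M})$ via the entropy bounds developed in Section~\ref{sec: entropy}.
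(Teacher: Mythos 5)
Your proposal is correct and follows essentially the same route as the paper: condition on $X$ to replace $y$ by $g_0(x)$, reduce $\mathbb{M}g-\mathbb{M}g_0$ to an integral against $\mathbb{Q}$, and complete the square to get the exact identity $-\tfrac{1}{2}D^2(g,g_0)$ (the paper also obtains equality, stating it as an inequality). Your added integrability checks via (A2), (A3) and Cauchy--Schwarz are a harmless refinement of what the paper leaves implicit.
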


If classical arguments for maximizers over a class of functions  are used based on the previous basic inequality and Lemma \ref{lem: LSElog} (which allows to restrict attention to functions $g$ that are bounded by $C\log n$, for some large $C>0$ that does not depend on $n$),  the obtained rate of convergence would be  inflated by a logarithmic factor:

\begin{theorem}\label{theo: ratelog}
Assume that assumptions (A1)-(A3) are satisfied and that there exists a constant $\overline q>0$ such that for all $\alpha\in {\cal S}_{d-1}$, with respect to the Lebesgue measure, the variable $\alpha^TX$ has a density which is bounded by $\overline q$. Then,  $D(\hat g_n,g_0)=O_{p}(n^{-1/3}(\log n)^{5/3}).$  More precisely, for all $\eps>0$, there exists $A>0$ that depends only on $\eps,a_0$ and $M$ such that $$P\left(D(\hat g_n,g_0)>Ad(1+\sqrt{\overline q R})n^{-1/3}(\log n)^{5/3}\right)\leq\eps.$$
\end{theorem}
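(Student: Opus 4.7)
The plan is to apply the classical M-estimator rate machinery: combine the basic inequality of Lemma~\ref{lem: basic}, the uniform bound $\|\hat g_n\|_\infty\leq\max_i|Y_i|=O_p(\log n)$ from Lemma~\ref{lem: LSElog} and (A3), and bracketing-entropy bounds for the bounded monotone-index class (promised by Section~\ref{sec: entropy}), in a peeling plus modulus-of-continuity argument. First, fix $\eps>0$; by Lemma~\ref{lem: LSElog} and (A3) I choose $C_\eps>0$ so that the event $E_n:=\{\max_{k\leq n}|Y_k|\leq K_n\}$ with $K_n:=C_\eps\log n$ satisfies $\PP(E_n)\geq 1-\eps/2$ for large $n$. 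On $E_n$ the LSE lies in
\[\mc G_{K_n}:=\{x\mapsto\Psi(\alpha^Tx):\alpha\in\mc S_{d-1},\,\Psi\in\mc M,\,\|\Psi\|_\infty\leq K_n\}.\]
Since $\hat g_n$ maximizes $\MM_n g=\PP_n f_g$ with $f_g(x,y)=yg(x)-g^2(x)/2$, the comparison $\MM_n\hat g_n\geq\MM_n g_0$ together with Lemma~\ref{lem: basic} gives the basic quadratic inequality $\tfrac12 D^2(\hat g_n,g_0)\leq(\PP_n-\PP)(\hat f_n-f_0)$, so a standard peeling argument over shells $\{g\in\mc G_{K_n}:D(g,g_0)\in(2^j r_n,2^{j+1}r_n]\}$ reduces the theorem to bounding
\[\phi_n(\delta):=\EE^*\sup_{g\in\mc G_{K_n},\,D(g,g_0)\leq\delta}\sqrt n\,|(\PP_n-\PP)(f_g-f_0)|.\]

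The entropy input I will draw from Section~\ref{sec: entropy} is a bracketing bound of the form $\log N_{[]}(\epsilon,\mc G_K,L^2(\QQ))\lesssim d(1+\sqrt{\overline q R})K/\epsilon$, obtained by patching the classical $K\sqrt{b-a}/\epsilon$-entropy of bounded monotone functions on an interval of length $\leq R$ with an $\eta$-net of $\mc S_{d-1}$; the key input for the net is the smoothing estimate $\int(\Psi(t+\tau)-\Psi(t-\tau))^2\,dt\leq 4K^2\tau$, which, via (A4), controls the $L^2(\QQ)$-increment $\|\Psi(\alpha^T\cdot)-\Psi(\alpha'^T\cdot)\|$ when $\|\alpha-\alpha'\|$ is small. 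The pointwise Lipschitz bound $|f_g-f_{g'}|\leq(|y|+K)|g-g'|$ together with (A3) yields both $\|f_g-f_{g'}\|_{L^2(\PP)}\lesssim K\|g-g'\|_{L^2(\QQ)}$ and an envelope for $\mc F_{K_n}:=\{f_g-f_0:g\in\mc G_{K_n}\}$ of order $K_n^2$ on $E_n$, so bracketing for $\mc G_{K_n}$ transfers to bracketing for $\mc F_{K_n}$ at the price of additional $K_n$-factors. Feeding these ingredients into the bracketing maximal inequality of van der Vaart and Wellner (Theorem~3.4.1) gives a bound $\phi_n(\delta)\lesssim d(1+\sqrt{\overline q R})K_n^a\sqrt\delta$ plus an envelope correction of comparable order; solving the rate equation $\sqrt n\, r_n^2\gtrsim \phi_n(r_n)$ with $K_n\asymp\log n$ and carefully tracking each factor of $K_n$ delivers $r_n\lesssim A\,d(1+\sqrt{\overline q R})(\log n)^{5/3}n^{-1/3}$. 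Summing the geometric peeling series and adding $\PP(E_n^c)\leq\eps/2$ then yields the displayed probability statement.

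The principal obstacle is the entropy estimate of Section~\ref{sec: entropy}: fusing the $L^2$-bracketing entropy of bounded monotone functions with a net of the index sphere while retaining explicit dependence on $K$, $d$, $\overline q$ and $R$ is more delicate than either ingredient in isolation, because the usual smoothing trick for monotone functions produces a $K$-dependent modulus in the sphere direction. Once that bound is in hand, the remaining work is standard, but one has to follow every occurrence of $K_n=C_\eps\log n$ through the entropy numerator, the envelope of $\mc F_{K_n}$, and the $L^2(\QQ)\to L^2(\PP)$ conversion in order to arrive at the specific exponent $5/3$ for the $\log n$ factor rather than a spuriously smaller or larger power.
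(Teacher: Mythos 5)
Your overall architecture matches the paper's: restrict to $\mc G_{K_n}$ with $K_n=C\log n$ via Lemma~\ref{lem: LSElog}, use the basic inequality of Lemma~\ref{lem: basic}, combine a bracketing entropy bound of order $d(1+\sqrt{\overline q R})K/\eps$ for the monotone-index class with a maximal inequality, and solve the rate equation (the peeling you describe is exactly what Theorem~3.2.5 of van der Vaart and Wellner packages). The entropy construction you sketch --- a net of $\mc S_{d-1}$ patched with the monotone-class entropy via the smoothing estimate and (A4)/the bounded-density assumption --- is also the paper's Lemma~\ref{lem: entropy}.

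There is, however, a genuine gap at the maximal-inequality step. You propose to feed the entropy bounds into the \emph{bounded-class} bracketing maximal inequality (Theorem~3.4.1 / Lemma~3.4.2 of van der Vaart and Wellner), justified by ``an envelope for $\mc F_{K_n}$ of order $K_n^2$ on $E_n$.'' But the class $\{(x,y)\mapsto yg(x)-g^2(x)/2 - f_0(x,y)\}$ has no finite uniform envelope, because $y$ ranges over all of $\RR$ under (A3); conditioning on the data event $E_n=\{\max_k|Y_k|\le K_n\}$ bounds the observed values $Y_i$, not the functions in the class, and the maximal inequality requires a deterministic bound (or a summable tail) at the population level --- in particular the centering term $\PP(f_g-f_0)$ and the population $L^2(\PP)$/exponential-moment control cannot be restricted to $E_n$. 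The paper circumvents exactly this obstacle by passing to the Bernstein ``norm'' $\Vert f\Vert_{B,\PP}=(2\PP(e^{|f|}-1-|f|))^{1/2}$, converting the $L^2$ entropies into Bernstein-norm entropies for the rescaled class $\widetilde{\mc F}_{Kv}=\{(f-f_0)\tilde C^{-1}\}$ with $\tilde C=4MK^2$ (Lemma~\ref{Bernsteinentropy}), and then invoking Lemma~3.4.3 of van der Vaart and Wellner, which is designed for unbounded classes under the exponential moment condition (A3); the normalization $\tilde C\asymp K^2$ is also one of the sources of the final exponent $5/3$. To repair your argument you would need either to adopt this Bernstein-norm route, or to insert an explicit truncation of $Y$ at level $K_n$ and control the resulting bias in $\PP(f_g-f_0)$ uniformly over $g\in\mc G_{K_n}$ (which (A3) makes exponentially small, but which must be stated and checked); as written, the appeal to Theorem~3.4.1 does not apply.
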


It may seem  superfluous to add Theorem \ref{theo: ratelog}. However, the obtained unrefined rate of convergence will be used to get rid of the additional logarithmic factor.   To explain how this works, 
set $v =  C n^{-1/3} (\log n)^2$ and $K  = C \log n$  for some constant $C > 0$ to be chosen appropriately. Lemma \ref{lem: LSElog} and Theorem \ref{theo: ratelog} are used to show that with a probability that can be made arbitrarily large by choice of $C$, the LSE $\widehat g_n$ is bounded by $K$ while $D(\widehat g_n,g_0)$ is smaller that $v$. Hence, with arbitrarily large probability, the LSE maximizes $\mathbb M_n g$ over the set $\mathcal G_{Kv}$ of all functions $g$ of the form $g(x)=\Psi(\alpha^Tx)$ with $\alpha\in \mathcal S_{d-1}$ and $\Psi\in\mathcal M$ such that $|g(x)|\leq K$ for all $x \in \mathcal{X}$ and 
\begin{equation}\label{eq: defv}
D(g,g_0)\leq v.
\end{equation}
Hence, although  optimization cannot be restricted to a set of functions that are uniformly bounded  in $n$, we can work with a class of functions that are bounded in the $L_2(\mathbb{Q})$-norm. The merit of the latter is that under (A2) and equivalence of $\mathbb{Q}$ with the Lebesgue measure, a function $g\in\mc G_{Kv}$  can be shown to exceed 
$2K_0$ only on a subset of $\mc X$ with Lebesgue measure of maximal order $(v/K_0)^2$.
The fact that considered functions $g$ are bounded by $2K_0$ except on such a small region will balance out the large values that $g$ might have on the same region, and this will prove to be very advantageous in computing the final entropy of the original class of functions.   To estimate this entropy, each function $g(.)   = \Psi(\alpha^T .)$  will be decomposed as follows:
\begin{eqnarray}\label{decomp}
g=(g-\bar g)+\bar g
\end{eqnarray}
where $\bar g$ is the  truncated version of $g$ defined by 
\begin{eqnarray}\label{eq:barg}
\bar g(x)=\begin{cases}g(x)&\mbox{if }|g(x)|\leq 2K_0\\
2K_0&\mbox{if  }g(x)> 2K_0\\
-2K_0&\mbox{if }g(x)<- 2K_0.
\end{cases}
\end{eqnarray}
The set of all possible $\bar g$ forms now a class of bounded functions on which standard arguments from empirical processes theory apply. On the other hand, the  differences $g-\bar g$ form a set of functions whose supremum norm increases with $n$ and which, by the discussion above,  take the value zero except on regions of a very small size. Those two classes of functions will be treated with different arguments. The assumption that $\alpha^TX$ has a density bounded from above will be used to compute entropy bounds for the former class (see Lemma \ref{lem: entropy} and the preceding comment) whereas the assumption of a density bounded away from zero will be used to compute entropy bounds for the latter class (see Lemma \ref{lem: entropyg-barg} and the preceding comment).  Below are some entropy results required in the proof of Theorems~\ref{theo: ratelog} and \ref{theo: rateL2}. The complete proofs of the theorems are given in Subsections~\ref{sec: proofRate} and \ref{proof: ratewithoutlog} whereas the proofs of the lemmas are given in Subsections \ref{sec: proofBounded} to  \ref{proof: Bernstein}.

\subsection{Entropy results}\label{sec: entropy}

We begin with some notation. For any class of functions $\cal F$ equipped with a norm $\|\cdot\|$, and $\eps>0$, we denote by 
$H_B(\eps, \mc F, \|\cdot\|)$ the corresponding bracketing entropy:  
\begin{eqnarray*} 
H_B(\eps, \mc F, \|\cdot\|)=\log N_B(\eps, \mc F, \|\cdot\|)
\end{eqnarray*}
where $N_B(\eps, \mc F, \|\cdot\|)=N$ is the smallest number of pairs of functions $(f_1^U,f_1^L)$, \ldots, $(f_N^U,f_N^L)$ such that all $\|f_j^L- f_j^U\|\leq \eps$ and for each $f\in \mc F,$ there exists a $j  \in \{1, \ldots, N \}$ such that $f_j^L \leq f \leq f_j^U.$  Moreover, assuming that $X$ has a bounded support $\mc X$, we set 
$R = \sup_{x \in \mathcal{X}}  \Vert x \Vert $
where $\|\,.\,\|$ denotes the Euclidean norm in $\RR^d$.
We then have
\begin{equation}\label{eq: R}
 \vert \alpha^Tx \vert \le R\quad for\ all\ x\in\mc X
 \end{equation}
for all $\alpha\in{\cal S}_{d-1}$, using the Cauchy-Schwarz inequality. 
In the rest of the paper,  we will use the following notation
\begin{list}{--}
        {
        \setlength{\topsep}{6pt}
        \setlength{\parskip}{0pt}
        \setlength{\partopsep}{0pt}
        \setlength{\parsep}{0pt}
        \setlength{\itemsep}{3pt}
        \setlength{\leftmargin}{20pt}}
\item $\|\,.\,\|_{\PP}$ and $\|\,.\,\|_{\mathbb Q}$ are the $L_2$-norms corresponding to respectively $\PP$ and $\mathbb Q$: 
$\|f\|_{\PP}^2=\int_{\mc X\times \RR} f^2(x,y)d\PP(x,y)$ and $\|g\|_{\mathbb Q}^2=\int_{\mc X} g^2(x)d\mathbb Q(x)$ for all $f:{\cal X}\times\mathbb R\to\mathbb R$ and $g:{\cal X}\to\mathbb R$.
\item $\mc M_{K}$ is the class of all nondecreasing functions on $\mathbb R$ that are bounded in absolute value by $K$
\item $\mathcal{G}_{K}$ is the class of functions $g(x)  = \Psi(\alpha^T x),    \ x \in \mathcal{X}$ where $ \alpha \in \mathcal{S}_{d-1}, \Psi \in \mc M_{K}$,
\item $\mathcal{F}_K$ is the class of functions $f$ of the form \eqref{eq: gtof}, $ x  \in \mathcal{X}, y  \in \RR$, and $g \in \mathcal{G}_K$,
\item $\mathcal{G}_{Kv}$ is the class of functions $g\in  \mathcal{G}_K$  satisfying  the condition \eqref{eq: defv},
\item $\mathcal{F}_{Kv}$ is the class of functions $f$ of the form \eqref{eq: gtof}, $ x  \in \mathcal{X}, y  \in \RR$, and $g \in \mathcal{G}_{Kv}$,

\item $\overline{\mathcal{G}}_{Kv}$ is the class of functions $g - \bar{g}$ , where $g \in \mathcal{G}_{Kv}$ and $\bar{g}$ is  given in (\ref{eq:barg}),
\item $\overline{\mathcal{F}}_{Kv}$ is the class of functions $f - \bar{f}$, where $f$ takes the form \eqref{eq: gtof} for some $g \in \mathcal{G}_{Kv}$ and $\bar{f}(x, y)  = y \bar{g}(x)  - \bar{g}^2(x)/2, \ x  \in \mathcal{X}, y  \in \RR$.
\end{list}

Our starting point is the  following result, which follows from Theorem 2.7.5 in \cite{vanweak}.

\begin{lemma}\label{lem:entropyMono}    There exists a universal constant $A > 0$ such that
\begin{eqnarray*}
 H_B(\eps, \mc M_{K}, \|\,.\,\|_{Q})\leq\frac {A K}\eps 
\end{eqnarray*}
for all $\eps>0$, $K>0$, and all probablity measures $Q$ on $\mathbb R$, where $ \|\,.\,\|_{Q}$ is the $L_2$-norm corresponding to $Q$: $ \|\Psi\|^2_{Q}=\int \Psi^2d Q$ for all $\Psi:\mathbb R\to\mathbb R$.
\end{lemma}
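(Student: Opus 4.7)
The plan is to reduce the claim to Theorem 2.7.5 of \cite{vanweak} by a simple affine rescaling. That theorem states that for every $r\geq 1$ there is a universal constant $K_r>0$ such that for the class $\mc F_{[0,1]}$ of all nondecreasing functions $\phi:\RR\to[0,1]$, and every probability measure $Q$ on $\RR$,
\[
H_B(\eta,\mc F_{[0,1]},\|\cdot\|_{r,Q})\le \frac{K_r}{\eta}\qquad\text{for all }\eta>0,
\]
where $\|\cdot\|_{r,Q}$ is the $L_r(Q)$-norm. We will use this with $r=2$.

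First I would define the bijection $T:\mc M_K\to\mc F_{[0,1]}$ by $T(\Psi)=(\Psi+K)/(2K)$. Since $\Psi$ is nondecreasing and $|\Psi|\le K$, $T(\Psi)$ is nondecreasing and takes values in $[0,1]$, so $T(\mc M_K)\subset\mc F_{[0,1]}$. For any bracket $[\phi^L,\phi^U]$ for $T(\Psi)$ in $\mc F_{[0,1]}$, the pair $[2K\phi^L-K,\,2K\phi^U-K]$ is a bracket for $\Psi$ in $\mc M_K$, and its $L_2(Q)$-width equals $2K\|\phi^U-\phi^L\|_{2,Q}$. Consequently, given any $\eps$-bracketing of $\mc M_K$, the inverse transform yields an $(\eps/(2K))$-bracketing of $\mc F_{[0,1]}$, and vice versa, so that
\[
N_B(\eps,\mc M_K,\|\cdot\|_{Q})\le N_B\!\left(\tfrac{\eps}{2K},\mc F_{[0,1]},\|\cdot\|_{2,Q}\right).
\]

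Taking logarithms and applying Theorem 2.7.5 of \cite{vanweak} (with $r=2$) to the right-hand side yields
\[
H_B(\eps,\mc M_K,\|\cdot\|_{Q})\le \frac{K_2}{\eps/(2K)}=\frac{2K_2\,K}{\eps},
\]
which is the desired bound with $A:=2K_2$, a universal constant independent of $K$, $\eps$, and $Q$. There is no real obstacle here beyond bookkeeping: the whole argument is just a verification that bracketing entropy rescales linearly in the range of the functions when working with a $\|\cdot\|_Q$-norm, combined with the classical bound for monotone functions on $[0,1]$.
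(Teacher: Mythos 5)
Your proposal is correct and is exactly the argument the paper intends: the paper simply states that the lemma "follows from Theorem 2.7.5 in \cite{vanweak}," and the affine rescaling $\Psi\mapsto(\Psi+K)/(2K)$ onto monotone $[0,1]$-valued functions, with the resulting factor $2K$ in the entropy bound, is the standard way to carry that out. No issues.
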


The next result, which follows from Lemma 22 of \cite{feige2002}, gives a bound on the minimal number of subsets with diameter at most $\eps,$ say,  into which $\mathcal{S}_{d-1}$ can be divided. Here, the diameter of some given subset $\mathcal{A}  \subset \mathcal{S}_{d-1}$,  is given by  $\sup_{(x, y) \in \mathcal{A}^2}  \Vert x - y \Vert$.

\begin{lemma}\label{lem:partsphere}
Fix $\eps \in (0, \pi/2)$,  and let  $N(\eps, \mathcal{S}_{d-1})$ be the number of subsets of equal size with diameter at most $\eps$ into  which $\mathcal{S}_{d-1}$ can be partitioned. Then,  there exists a universal constant $A > 0,$ such that 
 $N(\eps, \mathcal{S}_{d-1})  \le \left(A/\eps\right)^d.$
\end{lemma}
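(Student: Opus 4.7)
The plan is to reduce the partition problem to a standard covering problem on the sphere, which is controlled by a volume comparison, and then handle the "equal size" requirement by a subdivision step whose cost is absorbed by the extra factor of $1/\eps$ in the exponent $d$ versus $d-1$.

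First, I would establish a covering-number bound: there exists a universal constant $A_1>0$ such that $\mathcal{S}_{d-1}$ can be covered by $M \le (A_1/\eps)^{d-1}$ open spherical caps of Euclidean radius $\eps/3$. This follows by a standard maximal-packing argument: pick a maximal collection of points $x_1,\dots,x_M\in\mathcal{S}_{d-1}$ with pairwise Euclidean distances at least $\eps/3$. The open caps of radius $\eps/6$ around the $x_i$ are disjoint subsets of the ambient annulus around $\mathcal{S}_{d-1}$, and each has $(d-1)$-dimensional surface area on the sphere bounded below by a constant times $\eps^{d-1}$; comparing to the total surface area of $\mathcal{S}_{d-1}$, which depends only on $d$, yields the claimed $M \le (A_1/\eps)^{d-1}$. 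Maximality guarantees that the caps of radius $\eps/3$ centered at the $x_i$ cover $\mathcal{S}_{d-1}$.

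Second, I would convert this cover into a partition via Voronoi cells on the sphere. For each $i$, let $V_i$ be the set of points of $\mathcal{S}_{d-1}$ strictly closer (in Euclidean distance) to $x_i$ than to any other $x_j$, with ties broken in a fixed measurable way. The $V_i$ form a measurable partition of $\mathcal{S}_{d-1}$, and each $V_i$ is contained in the closed cap of radius $\eps/3$ about $x_i$, so $\mathrm{diam}(V_i)\le 2\eps/3<\eps$.

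Third, to meet the equal-size requirement, fix the uniform surface measure $\mu$ on $\mathcal{S}_{d-1}$, and set a target measure $m := c\,\eps^{d}$ for a suitable constant $c>0$. Since each $V_i$ satisfies $\mu(V_i)\le C\eps^{d-1}$ (the area of a cap of radius $\eps/3$), we may subdivide each $V_i$ into measurable pieces of equal measure $m$ (with at most one leftover piece, which we merge into a neighbor or absorb by a minor adjustment of $m$). Any subset of $V_i$ inherits $\mathrm{diam}\le\eps$, so the diameter bound is preserved. The total number of pieces is at most $M \cdot (C\eps^{d-1}/m + 1) \le (A/\eps)^{d}$ for a suitable universal $A>0$.

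The main obstacle is the equal-size clause: the bare covering gives the sharper exponent $d-1$, and the exact measure-equalization requires care to ensure that each $V_i$ can be cleanly partitioned into pieces of a common prescribed measure. This is standard since $\mu$ restricted to $V_i$ is atomless, so by a measure-theoretic argument one can select measurable subsets of any desired size; the loss from rounding up the count of pieces in each $V_i$ is bounded by $M$, and together with the factor $\eps^{d-1}/m \asymp 1/\eps$ this is exactly what the extra power of $1/\eps$ in $(A/\eps)^{d}$ accommodates.
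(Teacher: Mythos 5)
The paper does not actually prove this lemma: it is imported verbatim as ``Lemma 22 of \cite{feige2002}'', so your self-contained argument is necessarily a different route, and the covering half of it is sound. The maximal $\eps/3$-separated set, the surface-measure comparison giving $M\le (A_1/\eps)^{d-1}$ (you should be slightly careful here: the lower bound on the measure of a cap of radius $\eps/6$ has a $d$-dependent constant, and one must check that it folds into the form $(\eps/A_1)^{d-1}$ with $A_1$ universal --- it does, but your phrasing, which also invokes the ambient annulus and would only give exponent $d$, blurs this), and the Voronoi cells of diameter at most $2\eps/3$ together already yield everything the paper ever uses from this lemma: in the proofs of Lemmas \ref{lem: entropy} and \ref{lem: entropyg-barg} only the covering count $N\le(A/\eps)^d$ and one representative point per piece enter; the equal-size clause is never invoked.

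The genuine gap is precisely in your treatment of that equal-size clause. Subdividing each $V_i$ into pieces of a \emph{common} measure $m$ leaves a leftover of measure $\mu(V_i)-m\lfloor\mu(V_i)/m\rfloor$ in each cell, and neither of your two fixes works. Merging a leftover into a piece of a neighbouring cell produces a piece of measure strictly larger than $m$ (so the partition is no longer into equal-size sets) and a piece contained in the union of two cells, whose diameter can be as large as $4\eps/3>\eps$. Adjusting $m$ cannot help either: you would need a single $m$ dividing every $\mu(V_i)$ exactly, and the cell measures are in general incommensurable, so no such $m$ exists. The construction in \cite{feige2002} resolves this by a more careful redistribution scheme rather than by independent subdivision of the cells, and this is the one idea your proof is missing. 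Since the exponent budget ($d$ versus the $d-1$ you get from the cover) is exactly what the equalization step is supposed to consume, you cannot simply drop the step; but for the purposes of this paper you could equally well restate the lemma without the equal-size requirement, at which point your argument is complete.
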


In what follows, we assume that the assumptions (A1)-(A3) are satisfied.
The next step is to  use the results above to construct $\eps$-brackets for the classes $\mathcal{G}_{K}$, $\mathcal{F}_{K}$, $\overline{\mathcal{G}}_{Kv}$ and $\overline{\mathcal{F}}_{Kv}$. We begin with the classes $\mc G_{K}$ and $\mathcal{F}_{K}$. In the next lemma, we assume that $\alpha^TX$ has a bounded density on a bounded support for all $\alpha$. This assumptions is used in the proof to show that with $Q$ the distribution of $\alpha^TX$ with arbitrary $\alpha\in{\cal S}_{d-1}$, and $\Psi\in{\cal M}_K$, there exists a constant C such that
$$\int_\RR(\Psi(t+u)-\Psi(t-u))dQ(t)\leq Cu.$$

\begin{lemma}\label{lem: entropy} Assume that the assumptions (A1)-(A3) are satisfied and that there exists $\overline q>0$ such that for all $\alpha\in {\cal S}_{d-1}$, with respect to the Lebesgue measure, the variable $\alpha^TX$ has a density that is bounded by $\overline q$. Let $K>\eps  >0$. There exists a universal constant $A_1 > 0$ such that 
\begin{eqnarray*}
H_B\Big(\eps, \mc G_{K}, \|\,.\,\|_{\mathbb{Q}}\Big)\leq\frac {A_1K d(1+\sqrt{\overline q R})}{\eps}.
\end{eqnarray*}
Moreover, if $K>1$ then there exists $A_2 > 0$  depending only on $a_0$ such that
\begin{eqnarray*}
H_B\Big(\eps, \mc F_{K}, \|\,.\,\|_{\PP}\Big)\leq\frac {A_2K^2 d(1+\sqrt{\overline q R})}{\eps}.
\end{eqnarray*}
\end{lemma}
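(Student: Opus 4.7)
The plan is to construct brackets for $\mathcal{G}_K$ by discretizing the index $\alpha$ via the sphere covering of Lemma~\ref{lem:partsphere} and the ridge function $\Psi$ via the monotone bracketing of Lemma~\ref{lem:entropyMono}, and then to transfer the bracketing to $\mathcal{F}_K$ through the algebraic structure $f(x,y)=yg(x)-g^2(x)/2$ together with the moment bound in (A3).

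For $\mathcal{G}_K$, fix two resolutions $\epsilon_1$ and $\eta$ to be optimized, and apply Lemma~\ref{lem:partsphere} to pick representatives $\alpha_1,\ldots,\alpha_N\in\mathcal{S}_{d-1}$ with $N\leq(A/\epsilon_1)^d$ covering $\mathcal{S}_{d-1}$ at scale $\epsilon_1$. By Cauchy-Schwarz and \eqref{eq: R}, $\|\alpha-\alpha_j\|\leq\epsilon_1$ forces $|\alpha^T x-\alpha_j^T x|\leq\epsilon_1 R$ for every $x\in\mathcal{X}$. For each $\alpha_j$, Lemma~\ref{lem:entropyMono} supplies $\leq AK/\eta$ brackets $(\Psi_L,\Psi_U)$ of $\mathcal{M}_K$ with respect to the uniform law on $[-2R,2R]$, and these may be replaced by their monotone envelopes $\sup_{s\leq\cdot}\Psi_L(s)$ and $\inf_{s\geq\cdot}\Psi_U(s)$ without increasing the bracket width, ensuring monotonicity. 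For each such pair I would set
\[
g_L(x):=\Psi_L(\alpha_j^T x-\epsilon_1 R),\qquad g_U(x):=\Psi_U(\alpha_j^T x+\epsilon_1 R),
\]
which by monotonicity of $\Psi$ sandwiches every $g(\cdot)=\Psi(\alpha^T\cdot)\in\mathcal{G}_K$ with $\|\alpha-\alpha_j\|\leq\epsilon_1$.

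The main estimate is then $\|g_U-g_L\|_{\mathbb{Q}}$. Writing
\[
g_U-g_L=\bigl[(\Psi_U-\Psi_L)(\alpha_j^T x+\epsilon_1 R)\bigr]+\bigl[\Psi_L(\alpha_j^T x+\epsilon_1 R)-\Psi_L(\alpha_j^T x-\epsilon_1 R)\bigr]=:T_1+T_2,
\]
the first summand integrates against $\mathbb{Q}$ to at most $\bar q\int_{-2R}^{2R}(\Psi_U-\Psi_L)^2\,dt\lesssim \bar q R\eta^2$, since $\alpha_j^T X$ has a density bounded by $\bar q$ on $[-R,R]$. For $T_2$, the pointwise bound $T_2\leq 2K$ combined with $T_2^2\leq 2K\,T_2$ and a Fubini/change-of-variables argument give $\int T_2\,d\mathbb{Q}\leq 4\bar q\epsilon_1 RK$, hence $\int T_2^2\,d\mathbb{Q}\lesssim \bar q\epsilon_1 RK^2$. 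Balancing, I would choose $\eta^2\asymp \epsilon^2/(\bar q R)$ and $\epsilon_1\asymp\epsilon^2/(\bar q RK^2)$; taking logarithms of the resulting bracket count $(C/\epsilon_1)^d\cdot AK/\eta$ and using $\log x\leq x$ then produces the announced bound $A_1 Kd(1+\sqrt{\bar q R})/\epsilon$, the additional ``$1$'' in $1+\sqrt{\bar q R}$ accounting for the regime $\bar q R\lesssim 1$ in which the bare monotone entropy dominates.

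The passage to $\mathcal{F}_K$ is algebraic. Given a $\mathcal{G}_K$-bracket $(g_L,g_U)$, splitting $y=y^+-y^-$ to handle signs, and using that $|g_L|,|g_U|\leq K$ implies $g^2$ can be bracketed by functions whose pointwise width is at most $2K(g_U-g_L)$ (the one delicate case $g_L\leq 0\leq g_U$ is handled via $\max(g_L^2,g_U^2)\leq(g_U-g_L)^2\leq 2K(g_U-g_L)$), one obtains $(f_L,f_U)$ with $f_U-f_L\leq(|y|+K)(g_U-g_L)$. Taking the $L^2(\mathbb{P})$ norm, the conditional bound $E[Y^2\mid X]\leq 2a_0$ from (A3) with $s=2$ yields $\|f_U-f_L\|_{\mathbb{P}}\leq C(a_0)K\eta$ for $K\geq 1$. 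Invoking the first bound at precision $\epsilon/(C(a_0)K)$ then gives the second claim, the extra factor $K$ accounting for the $K^2$ in the final constant. The main obstacle, as I see it, is the first $\mathbb{Q}$-norm computation: the two resolutions $\epsilon_1$ and $\eta$ must be balanced against each other while tracking the joint dependence on $\bar q R$ and $K$, and the log factors inherited from the $d$-dimensional sphere cover must be absorbed into the announced $1/\epsilon$ rate via $\log x\leq x$. Keeping the monotonicity of $(\Psi_L,\Psi_U)$ throughout is the technical detail that makes the shift construction of $(g_L,g_U)$ legitimate.
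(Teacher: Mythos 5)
Your construction is essentially the paper's: cover $\mathcal{S}_{d-1}$ at a scale $\epsilon_1\asymp\eps^2/K^2$ (up to the $\overline qR$ factor), bracket $\mathcal M_K$ via Lemma~\ref{lem:entropyMono}, sandwich $g$ using the shifts $\alpha_j^Tx\pm\epsilon_1 R$, control the shift term by the telescoping/density-bound argument, and pass to $\mathcal F_K$ by splitting $f$ into the $yg$ and $g^2$ parts with (A3) at $s=2$. Your two variations — bracketing $\mathcal M_K$ against a single uniform reference law on $[-2R,2R]$ rather than the shifted predictor laws, and deducing the $g^2$-brackets pointwise from the $g$-brackets (width $\le 2K(g_U-g_L)$) rather than re-running the covering argument for U-shaped functions as the paper does — are both valid and, if anything, slightly cleaner. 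One slip: to absorb $d\log(C/\epsilon_1)=d\log(C\overline qRK^2/\eps^2)$ into the claimed $A_1Kd(1+\sqrt{\overline qR})/\eps$ you must use $\log x\le\sqrt{x}$ (as the paper does), not $\log x\le x$; the latter yields a term of order $d\,\overline qRK^2/\eps^2$, which is not of the announced form for small $\eps$.
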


The next lemma will be used to control the differences $g(X)-\overline g(X)=h(\alpha^TX )$, $g\in\mc G_{Kv}$. Here, we assume that for all $\alpha$, the variable $\alpha^TX$ has a density that is bounded away from zero on its support. The assumption is used to show that under \eqref{eq: defv}, $h=0$ except on a set whose Lebesgue measure is at most $\underline q^{-1}K_0^{-2}v^2$, see \eqref{eq: whylb} below. Since the distribution of $\alpha^TX$ is also assumed to have a bounded density with respect to the Lebesgue measure, this implies that the probablity that $h(\alpha^TX )\neq 0$ is of maximal order $v^2$, for all $\alpha$, leading to a sharp bound for $\bar{ \mc G}_{Kv}$ and then $\bar{ \mc F}_{Kv}$.

\begin{lemma}\label{lem: entropyg-barg}  Assume that the assumptions (A1)-(A4) are satisfied. Let $\eps>0$ and $v>0$. There exists a  constant $A_1  > 0$ depending only on $K_0$, $\overline q$, $\underline q$ and $R$ such that 
\begin{eqnarray*}
H_B \Big(\eps, \bar{ \mc G}_{Kv}, \|\,.\,\|_\QQ \Big)\leq  \ \frac{A_1 Kv}{\eps}  + d \log\left( \frac{A_1 K^2}{\eps^2} \right)
\end{eqnarray*}
for all $K>\eps$ such that $Kv>\eps K_0\sqrt{2R\underline q}$. Moreover,  there exists $A_1 > 0$ depending only on $K_0$, $\overline q$, $\underline q$, $R$ and $a_0$ such that for all  $K>1\vee\eps$ such that $K^2v>\eps K_0\sqrt{2R\underline q}$, we have
\begin{eqnarray*}
H_B\Big(\eps, \bar{ \mc F}_{Kv}, \|\,.\,\|_{\PP} \Big)\leq    \frac{A_1 K^2v}{\eps} + d \log\left( \frac{A_1K^4}{\eps^2} \right).
\end{eqnarray*}  
\end{lemma}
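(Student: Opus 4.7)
The core observation is that the constraints $|g_0|\leq K_0$ (from (A2)) and $D(g,g_0)\leq v$ together imply that the excess region $\{|g|>2K_0\}$ is small: since $|g-g_0|>K_0$ on that set,
$$K_0^2\,\QQ(\{x:|g(x)|>2K_0\})\leq v^2.$$
Writing $g(x)=\Psi(\alpha^Tx)$ and using the density lower bound $\underline q$ from (A4) on $[-R,R]$, the Lebesgue measure of $\{t\in[-R,R]:|\Psi(t)|>2K_0\}$ is at most $\delta:=v^2/(K_0^2\underline q)$. Since $\Psi$ is nondecreasing, this set is a disjoint union of two intervals adjacent to the endpoints $\pm R$. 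Consequently $h:=\Psi-\bar\Psi$, which is nondecreasing with $|h|\leq K$ and satisfies $g-\bar g=h(\alpha^T\cdot)$, is supported in $[-R,-R+\delta]\cup[R-\delta,R]$. This small-support structure is the key feature exploited below.

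To bracket the class of such $h$ in $L^2([-R,R],\mathrm{Leb})$, split $h=h^+-h^-$ into its nonnegative and nonpositive parts; each is a bounded monotone function on an interval of length at most $\delta$. Rescaling to $[0,1]$ and applying Lemma~\ref{lem:entropyMono} gives $\eps_0$-brackets $[L_k,U_k]$ for $h$ in $L^2(\mathrm{Leb})$ with log-cardinality at most $C_1K\sqrt{\delta}/\eps_0\asymp C_1Kv/\eps_0$, where $C_1$ absorbs $K_0,\underline q$. The standard bracketing construction yields monotone $L_k,U_k$, a property used critically in the next step.

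For the dependence on $\alpha$, partition $\mc S_{d-1}$ into $N\leq(A/\eta)^d$ subsets of diameter at most $\eta$ via Lemma~\ref{lem:partsphere}, with representatives $\alpha_j$. For $\alpha$ in the $j$-th subset, Cauchy--Schwarz together with (A1) gives $|\alpha^Tx-\alpha_j^Tx|\leq\eta R$. Define
$$\tilde L(x):=L_k(\alpha_j^Tx-\eta R),\qquad \tilde U(x):=U_k(\alpha_j^Tx+\eta R),$$
so by monotonicity of $U_k,L_k$, $\tilde L\leq h(\alpha^T\cdot)\leq\tilde U$. Decomposing $(\tilde U-\tilde L)^2\leq 2[U_k(\alpha_j^Tx+\eta R)-U_k(\alpha_j^Tx-\eta R)]^2+2(U_k-L_k)^2(\alpha_j^Tx-\eta R)$ and using the density upper bound $\overline q$ from (A4), one obtains
$$\|\tilde U-\tilde L\|_\QQ^2\leq 2\overline q\int_{-R}^{R}\!(U_k(t+\eta R)-U_k(t-\eta R))^2\,dt+2\overline q\,\|U_k-L_k\|_{L^2}^2.$$
Using the monotone-difference inequality $a^2\leq 2Ka$ and the shift identity $\int(U_k(t+\eta R)-U_k(t-\eta R))\,dt\leq 4K\eta R$ (which holds for any monotone $U_k$ with $|U_k|\leq K$), the first term is bounded by $16\overline q K^2R\eta$. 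Choosing $\eta\asymp\eps^2/K^2$ and $\eps_0\asymp\eps$ (absorbing $R,\overline q,\underline q,K_0$ into $A_1$) makes $\|\tilde U-\tilde L\|_\QQ\leq\eps$, and the total entropy
$$H_B(\eps,\bar{\mc G}_{Kv},\|\cdot\|_\QQ)\leq\frac{A_1Kv}{\eps}+d\log\!\left(\frac{A_1K^2}{\eps^2}\right)$$
follows. The stated condition $Kv>\eps K_0\sqrt{2R\underline q}$ places us in the nontrivial regime where the $L^2$-budget $K\sqrt{\delta}$ of $h$ is comparable to or exceeds the scale $\eps\sqrt{2R}$; otherwise a single trivial bracket suffices.

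Finally, for $\bar{\mc F}_{Kv}$, decompose $f-\bar f=h(\alpha^Tx)\bigl[y-(g(x)+\bar g(x))/2\bigr]$. On $\{h\neq 0\}$, $g+\bar g=h+4K_0\,\mathrm{sgn}(h)$ is determined by $h$, so $f-\bar f$ is a function of $(h,y)$ alone that is Lipschitz in $h$ with coefficient at most $|y|+2K$. Using the moment bound (A3) to control $\|(|Y|+K)(\tilde U-\tilde L)\|_\PP\lesssim K\|\tilde U-\tilde L\|_\QQ$, brackets for $\bar{\mc G}_{Kv}$ lift to brackets for $\bar{\mc F}_{Kv}$ with width inflated by a factor of $K$; this is the substitution $\eps\mapsto\eps/K$ in the previous display and yields $A_1K^2v/\eps+d\log(A_1K^4/\eps^2)$. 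The main obstacle is balancing the three sources of error --- monotone bracketing of $h$ reduced by the small-support constraint, shift error from the $\alpha$-discretization, and $Y$-fluctuation in the $\bar{\mc F}$ lift --- so that the final entropy decomposes cleanly into the $Kv/\eps$ term reflecting the ``effective size'' $Kv$ of $h$ and the $d\log(K^2/\eps^2)$ term reflecting the sphere discretization at scale $\eta\asymp\eps^2/K^2$.
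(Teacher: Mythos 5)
Your overall strategy is the paper's: use (A2), (A4) and $D(g,g_0)\le v$ to show that $h:=g-\bar g$ composed with $\alpha^T\cdot$ can be nonzero only on a set of measure $O(v^2)$, discretize the sphere at scale $\eta\asymp\eps^2/K^2$ with a shift trick to handle the $\alpha$-error, apply the monotone-class bracketing bound on the small effective support to get the $Kv/\eps$ term plus the $d\log(K^2/\eps^2)$ term from the net, and then lift to $\bar{\mc F}_{Kv}$ using (A3). Your lift, via $f-\bar f=(g-\bar g)\bigl[y-(g+\bar g)/2\bigr]$ and the identity $g+\bar g=h+4K_0\,\mathrm{sgn}(h)$ on $\{h\ne0\}$, is a somewhat cleaner packaging than the paper's (which brackets $y(g-\bar g)$ with a sign-of-$y$ split and $g^2-\bar g^2$ separately), and it yields the same $K$-inflation, i.e.\ the substitution $\eps\mapsto\eps/K$, provided you form the brackets as envelopes $T(\tilde L(x),y)\pm(|y|+2K)(\tilde U-\tilde L)(x)$ rather than invoking Lipschitz continuity alone.

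There is, however, a genuine flaw in the localization step. You claim that $\{t\in[-R,R]:|\Psi(t)|>2K_0\}$ has Lebesgue measure at most $\delta=v^2/(K_0^2\underline q)$ and hence that $h$ is supported in $[-R,-R+\delta]\cup[R-\delta,R]$. The lower bound $\underline q$ in (A4) applies only on the support $[a_\alpha,b_\alpha]=\{\alpha^Tx:x\in\mc X\}$ of $\alpha^TX$, which is an $\alpha$-dependent interval generally strictly inside $[-R,R]$ (take $\mc X$ a cube); outside $[a_\alpha,b_\alpha]$ you have no control on $\Psi$, and the exceedance intervals sit next to $a_\alpha$ and $b_\alpha$, not next to $\pm R$. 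This matters because your brackets must dominate $h$ at the shifted arguments $\alpha_j^Tx\pm\eta R$, which can leave $[a_\alpha,b_\alpha]$ by up to $2\eta R$, a region where the $v^2$-control does not apply; and because the location of the small-support intervals varies with $\alpha$, so the per-net-point bracket family must be built on intervals determined by $\alpha_j$. This is exactly where the hypothesis $Kv>\eps K_0\sqrt{2R\underline q}$ is used in the paper: with $\eta=\eps^2/K^2$ it gives $2R\eps^2/K^2\le \underline q^{-1}K_0^{-2}v^2$, so the enlarged intervals (the paper's $\mathcal I_{i,1},\mathcal I_{i,2}$ adjacent to the endpoints of $[a_i,b_i]$) still have length $O(v^2)$ and the log-cardinality of the $h$-brackets remains of order $Kv/\eps$. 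Your reading of that hypothesis as merely excluding a trivial regime misses its actual role, and without it your claimed count $\asymp K\sqrt{\delta}/\eps_0$ is not justified for general bounded convex $\mc X$. The repair is to follow the paper: for each net point $\alpha_i$, bracket monotone functions bounded by $K$ that vanish outside two fixed intervals of length at most $2\underline q^{-1}K_0^{-2}v^2$ adjacent to the endpoints of $[a_i,b_i]$, and only then apply the shift argument; the rest of your computation (shift error $\lesssim \overline qK^2R\eta$, choice $\eta\asymp\eps^2/K^2$, and the (A3)-based lift) goes through as written.
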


The next lemma will be needed to give entropy bounds in the Bernstein norm. We recall that the Bernstein norm of some function $f$ with respect to $\PP$ is defined by 
\begin{eqnarray*}
\Vert f \Vert_{B, \mathbb{P}}  =  \Big(2 \PP( e^{\vert f \vert }  - 1 - \vert f \vert )  \Big)^{1/2}.
\end{eqnarray*}  
Although not technically a norm, it is typically referred to as such in the literature \citep[page 324]{vanweak}, and we do not stray from this in what follows.  With $\tilde{C} > 0$ a constant appropriately chosen,  Lemma \ref{Bernsteinentropy} below derives from Lemmas \ref{lem: entropy} and \ref{lem: entropyg-barg}  upper bounds on the  bracketing number of the classes 
\begin{eqnarray}\label{eq: tildeF}
\widetilde{\mathcal{F}}_{Kv}  :=\Big \{(f - f_0) \tilde{C}^{-1}, f \in \mathcal{F}_{Kv}  \Big \}  
\end{eqnarray}
where $f_0(x,y)=yg_0(x)-g_0^2(x)/2$ and 
\begin{eqnarray}\label{eq: tildebarF}
 \widetilde{\bar{ \mc F}}_{Kv}:= \Big \{ f
\tilde{C}^{-1}, f \in \bar {\mathcal{F}}_{Kv} \Big \}   
\end{eqnarray}
with respect to the Bernstein norm.  This will enable us to use Lemma 3.4.3 of \cite{vanweak}  which does not require  the class of functions of interest to be bounded. 

\begin{lemma}\label{Bernsteinentropy} Assume that the assumptions (A1)-(A3) are satisfied and that there exists $\overline q>0$ such that for all $\alpha\in {\cal S}_{d-1}$, with respect to the Lebesgue measure, the variable $\alpha^TX$ has a density that is bounded by $\overline q$. Let $\eps>0$ and $v>0$.   Let $M$ be the same constant from the moment condition (\ref{eq: hypexpo}) of Assumption (A3). Let $\tilde{C} = 4M K^2$ such that $ K \ge (2K_0) \vee 2$. Then, there exist constants $A_1>0$ and $A_2 > 0$ that depend on $a_0$ and $M$ only  such that 
\begin{eqnarray*}
H_B\Big(\eps, \widetilde{\mathcal{F}}_{Kv}, \Vert \cdot \Vert_{B, \PP} \Big)\leq\frac {A_1d(1+\sqrt{\overline qR}) }{\eps}, \  \ \textrm{and}  \ \  \Vert \tilde{f}  \Vert_{B, \PP}   \le A_2 v
\end{eqnarray*}
for  all $\tilde f  \in \widetilde{\mathcal{F}}_{Kv}$.  If moreover, the assumption (A4) is fulfilled, then there exist constants $A_1>0$ and $A_2 > 0$ depending  only on $K_0$, $R$, $a_0$, $M$, $\bar{q}$ and $\underline q$  such that 
\begin{eqnarray*}
H_B\Big(\eps, \widetilde{\bar{ \mc F}}_{Kv}, \Vert \cdot \Vert_{B, \PP} \Big)\leq  \frac{A_1 v}{\eps} + d \log\left( \frac{A_1 }{\eps^2} \right) , \  \ \textrm{and} \ \ \Vert  \tilde{f} \Vert_{B, \PP}   \le A_2 v
\end{eqnarray*}
for  all $\tilde{f} \in \widetilde{\bar{ \mc F}}_{Kv}$, provided that $K^2v>A_2\eps$ and $K>\eps$.
\end{lemma}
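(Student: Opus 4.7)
The strategy is to convert $L_2(\QQ)$-brackets from Lemmas~\ref{lem: entropy} and~\ref{lem: entropyg-barg} into Bernstein brackets for the rescaled classes $\widetilde{\mc F}_{Kv}$ and $\widetilde{\bar{\mc F}}_{Kv}$. The rescaling $\tilde C = 4MK^2$ is tailored so that the moment hypothesis (A3) makes this conversion work. The starting observation is a pointwise bound: for $g \in \mc G_K$, since $|g|, |g_0| \le K$ (recall $K \ge 2K_0$) we have $|f - f_0| \le (|y|+K)|g - g_0|$, and since $|g - g_0| \le 2K$, any $\tilde f = (f - f_0)/\tilde C \in \widetilde{\mc F}_{Kv}$ satisfies
\begin{equation*}
|\tilde f| \le \frac{|y|+K}{2MK} \le \frac{|y|}{2MK} + \frac{1}{2M}.
\end{equation*}
An analogous pointwise estimate holds for $\widetilde{\bar{\mc F}}_{Kv}$, starting from $|f - \bar f| \le (|y|+K)|g - \bar g|$ (using $|g + \bar g|/2 \le K$) together with $|g - \bar g| \le K$.

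Next, the elementary inequality $e^u - 1 - u \le (u^2/2)e^u$ for $u \ge 0$ combined with the pointwise bound above yields
\begin{equation*}
\|\tilde f\|_{B,\PP}^2 \le e^{1/(2M)}\,\PP\bigl(\tilde f^2 e^{|y|/(2MK)}\bigr) \le \frac{e^{1/(2M)}}{(4MK^2)^2}\int_{\mc X}\Delta(x)^2 I_x\, d\QQ(x),
\end{equation*}
where $\Delta(x)$ equals $|g(x)-g_0(x)|$ or $|g(x)-\bar g(x)|$ as appropriate and $I_x = \int(|y|+K)^2 e^{|y|/(2MK)}\,d\PP_x(y)$. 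Expanding the exponential in a Taylor series and applying (A3) term by term, the bound $\int|y|^s d\PP_x \le a_0\, s!\, M^{s-2}$ gives $I_x \le C(a_0,M)K^2$ whenever $K \ge 1$, so
\begin{equation*}
\|\tilde f\|_{B,\PP}^2 \le \frac{C(a_0,M)}{K^2}\,\|\Delta\|_\QQ^2.
\end{equation*}
The norm bound $\|\tilde f\|_{B,\PP} \le A_2 v$ is then immediate: for $\widetilde{\mc F}_{Kv}$ because $\|g - g_0\|_\QQ \le v$ by definition of $\mc G_{Kv}$, and for $\widetilde{\bar{\mc F}}_{Kv}$ because $g(x) \ne \bar g(x)$ forces $|g(x)| > 2K_0$ and hence $|g(x) - \bar g(x)| = |g(x)| - 2K_0 \le |g(x)| - K_0 \le |g(x) - g_0(x)|$ pointwise, so $\|g - \bar g\|_\QQ \le \|g - g_0\|_\QQ \le v$.

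The entropy bounds follow the same recipe applied to bracket \emph{differences}. Given an $L_2(\QQ)$-bracket $[g^L, g^U]$ of $\mc G_K$ (respectively $\bar{\mc G}_{Kv}$) of width $\delta$, the induced bracket $[F^L, F^U]$ for $f - f_0$ (respectively $f - \bar f$) satisfies $|F^U - F^L| \le (|y|+K)(g^U - g^L)$, so rescaling by $\tilde C$ and repeating the computation above shows that its Bernstein width is at most $\sqrt{C(a_0,M)}\,\delta/K$. Taking $\delta = K\eps/\sqrt{C(a_0,M)}$ in the entropy estimates of Lemmas~\ref{lem: entropy} and~\ref{lem: entropyg-barg} respectively produces the claimed orders $O(d(1+\sqrt{\overline q R})/\eps)$ and $O(v/\eps + d\log(1/\eps^2))$, and the conditions $K > \eps$ and $K^2 v > A_2\eps$ arise from ensuring that the underlying $\QQ$-bracket bounds apply in this regime. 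The main technical obstacle is the Taylor-series estimate of $I_x$: one must verify that the series $\sum_{k\ge 1}(k+1)(k+2)(2K)^{-k}$ and $\sum_{k\ge 2}(2K)^{-k}$, arising from matching the moment bound with the expansion of $e^{|y|/(2MK)}$, converge with bounds depending only on $a_0$ and $M$. This precisely dictates the scaling $\tilde C = 4MK^2$: a smaller power of $K$ would leave these series divergent in $K$, while a larger one would dilute the bracket widths beyond what is necessary.
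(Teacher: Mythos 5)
Your proposal is correct, and while it follows the same overall architecture as the paper's proof (convert $L_2(\QQ)$-brackets into Bernstein brackets, with the rescaling $\tilde C=4MK^2$ chosen so that (A3) controls the exponential moments), it differs in three technical respects worth noting. First, you control the Bernstein norm through the single inequality $e^u-1-u\le \tfrac{u^2}{2}e^u$ combined with the pointwise factorization $|f-f_0|\le(|y|+K)|g-g_0|$ and the moment bound $I_x\lesssim K^2$; the paper instead expands $\Vert h\Vert_{B,\PP}^2=2\sum_{k\ge2}\PP|h|^k/k!$ and treats each term via convexity of $x\mapsto x^k$ and (A3). Your route loses a factor of $K$ in the intermediate scaling (Bernstein width $\sim\delta/K$ for $\QQ$-width $\delta$, versus the paper's $\eps/K^2$), but this is exactly compensated by the next difference. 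Second, you derive the bracket for $g^2$ pointwise from the bracket for $g$ (width at most $2K(g^U-g^L)$, and similarly $g^2-\bar g^2=h^2+4K_0|h|$ is a monotone function of $h=g-\bar g$), so you only need the entropy of $\mc G_K$ from Lemma \ref{lem: entropy} rather than a separate bracketing of the class $\mc G_2=\{g^2\}$ as the paper does; the final bounds coincide because the paper's dominant entropy contribution $K^2/\eps'$ from $\mc G_2$ is evaluated at $\eps'\sim K^2\eps$ while yours, $K/\delta$, is evaluated at $\delta\sim K\eps$. Third, your norm bound for $\widetilde{\bar{\mc F}}_{Kv}$ rests on the pointwise inequality $|g-\bar g|\le|g-g_0|$ (valid since $g\neq\bar g$ forces $|g|>2K_0\ge|g_0|+K_0$), which is slightly sharper than the paper's convexity argument in \eqref{ineqg-barg} yielding $\Vert g-\bar g\Vert_\QQ^2\le4\Vert g-g_0\Vert_\QQ^2$. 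The only loose end is the bookkeeping of the side conditions ($K>\eps$, $K^2v>A_2\eps$): your choice $\delta=K\eps/\sqrt{C}$ translates the hypotheses of Lemma \ref{lem: entropyg-barg} into conditions of the form $v\gtrsim\eps$ rather than $K^2v\gtrsim\eps$, but the paper is equally informal on this point and the discrepancy is absorbed into the constants in the regime where the lemma is actually applied.
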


Note that the condition $\eps < 2$ guarantees that $K > \eps$ since $K\ge 2$. Also, we point out that the constants $A_1$ and $A_2$ may not be  the same ones as in Lemma \ref{lem: entropyg-barg} but we can always increase their respective values so that they are suitable for Lemma \ref{Bernsteinentropy}.

\subsection{Proof  of Lemma \ref{lem: LSElog} } \label{sec: proofBounded}
For a fixed $\alpha\in{\cal S}_{d-1}$, let $\widehat\Psi_n^{\alpha}$ be a minimizer of 
$h_n(\Psi,\alpha)$ over $\Psi \in \mathcal{M}$.  It follows from Theorem 1 in  \cite{barlowstatistical} that $\widetilde\Psi_n(Z_k)=d_k$ for $k =1, \ldots, m$, where
$Z_1<\dots<Z_m$ are the ordered distinct values of $\alpha^TX_1,\dots,\alpha^TX_n$, and 
$d_1\leq \ldots\leq d_m$ are the left derivatives of the greatest convex minorant  of the cumulative sum diagram defined by the set of points 
$$\left\{ (0,0), \left(\sum_{i=1}^n\mathbb{I}_{ \alpha^T X_i\le Z_k}, \sum_{i=1}^nY_i\mathbb{I}_{\alpha^T X_i\le Z_k}\right), k =1, \ldots, m \right\}.$$
Hence we have
$$\min_{1\le k\le m}\frac{\sum_{i=1}^nY_i\mathbb{I}_{\alpha^T X_i\le Z_k}}{\sum_{i=1}^n\mathbb{I}_{ \alpha^T X_i\le Z_k}}\leq  \hat \Psi_n^\alpha(\alpha^TX_j)\leq \max_{0\le k\le m-1}\frac{\sum_{i=1}^nY_i\mathbb{I}_{\alpha^T X_i> Z_k}}{\sum_{i=1}^n\mathbb{I}_{ \alpha^T X_i> Z_k}}, $$
with $Z_0=-\infty$,
for all $j=1,\dots,n$. Therefore, 
$\min_{1\le i\le n}Y_i\leq  \hat \Psi_n^\alpha(\alpha^TX_j)\leq \max_{1\le i\le n}Y_i$
for all $\alpha\in\mathcal S_{d-1}$ and $j=1,\dots,n$. The inequalities  in \eqref{eq: minmax} follow since by definition, $\hat g_n(x)$ takes the form $\hat \Psi_n^\alpha(\alpha^Tx)$ for all $x$, where $\alpha$ is replaced by the LSE $\hat\alpha_n\in\mathcal S_{d-1}$.

Now we prove that under the assumptions (A2) and (A3),
\begin{eqnarray}\label{eq: bound log}
\max_{1\leq i\leq n}|Y_i|=O_p(\log n).
\end{eqnarray}
For an integer $s \ge 2$, it follows from convexity of the function $z\mapsto |z|^s$ on $\RR$ that 
\begin{eqnarray*}
E[\vert Y - g_0(x) \vert^s   | X =x ]  & \le  &  2^{s-1}  \Big (E[\vert Y \vert^s | X=x]    + |g_0(x)|^s \Big)  \\
& \le & 2^{s-1}  \Big(s! a_0 M^{s-2}  +  K^s_0  \Big)    
\le   s! b_0 (M')^{s-2}
\end{eqnarray*}
with $b_0 = 2 (a_0 + K^2_0)$ and $M' = 2 (M \vee K_0)$. Now, using  Lemma 2.2.11 of \cite{vanweak} with $n=1$ and $\tilde{Y}= Y - g_0(x)$ and after integrating out with respect to $d\mathbb{Q}$, we obtain
\begin{eqnarray*}
P\left(\vert Y-g_0(X) \vert > t  \right )  \le 2 \exp\left(\frac{-t^2}{2 (2 b_0 + M't) }\right)
\end{eqnarray*}
for all $ t > 0$.   Hence, with $t = C \log n$ such that $K_0  <  C \log (n) /2 $ we have that  
\begin{eqnarray*}
P\left( \max_{ 1 \le i \le n}   \vert Y_i  \vert  > C \log  n  \right )
&  \le&   \sum_{i =1}^n P \left ( \vert Y_i-g_0(X_i) \vert  > C \log (n)/2  \right)  \\
& \le& 2n \exp \left(\frac{-C^2 \log n}{4 \left(4 b_0 (\log n)^{-1}+  M'C\right) }\right)
\end{eqnarray*}
which converges to $0$ as $n \to \infty$ provided that $C$ is sufficiently large so that
$8M'C<C^2$. Lemma \ref{lem: LSElog} follows.   \qed

\subsection{Proof of Lemma \ref{lem: basic}}\label{proof: basic}
Since $E[Y|X=x]  = g_0(x)$ we have
\begin{eqnarray*}
\notag
\mathbb Mg-\mathbb Mg_0
&=&\int_{\mathcal{X}} \left\{g_0(x)(g(x)-g_0(x))-\frac{g(x)^2}{2}+\frac{g_0(x)^2}{2}\right\}d\mathbb Q(x) \\
&=&-\frac{1}{2} \int_{\mathcal{X}}  (g(x)  - g_0(x))^2  d\mathbb{Q}(x),
\end{eqnarray*}  
which proves Lemma \ref{lem: basic}. \qed

\subsection{Proof of Lemma \ref{lem: entropy}}  \label{proof: entropy}
Let  $\eps_\alpha=\eps^2K^{-2} \in (0,1).$ 
By Lemma~\ref{lem:partsphere},  $\mc S_{d-1} $  can be covered by $N$ neighborhoods with diameter at most $\eps_\alpha$ where
$N \le   (A  \eps_\alpha^{-1})^d$ 
with $A>0$ a universal constant.  Let $\{\alpha_1, \ldots, \alpha_N\}$ denote elements of each of these neighborhoods. 
Now, consider an arbitrary $g\in\mc G_{K}$.  Then, $g(x)  = \Psi(\alpha^T x), \ x \in \mathcal{X},$ for some $\Psi  \in \mathcal{M}_{K}$ 
and $\alpha \in \mathcal{S}_{d-1}$. 
We can find $i  \in \{1, \ldots, N \}$ such that $\Vert \alpha  - \alpha_i \Vert \le \eps_\alpha$. Then, using the monotonicity of $\Psi$ together with the Cauchy-Schwarz inequality  we can write  for all $x\in  \mathcal{X}$ that \begin{eqnarray*}
g(x) &=& \Psi(\alpha_i^T x + (\alpha-\alpha_i)^T x)
\leq\Psi(\alpha_i^T x+\eps_\alpha R) 
\end{eqnarray*}
 and $
g(x)\geq\Psi(\alpha_i^T x-\eps_\alpha R) $.    
Then,  Lemma \ref{lem:entropyMono} implies that with $N'=\exp( A K\eps^{-1})$, at the cost of increasing $A$, 
we can find brackets  $[\Psi^L_j,   \Psi^U_j]$  covering the class of functions $\mathcal{M}_{K}$ such that 
\begin{eqnarray*}
\int_{\mathbb R}  (\Psi^U_j(t)  - \Psi^L_j(t) )^2  d Q^-_i(t)   \le \eps^2\quad and\quad \int_{\mathbb R}  (\Psi^U_j(t)  - \Psi^L_j(t) )^2  d Q^+_i(t)   \le \eps^2
\end{eqnarray*}
for $j=1, \ldots, N'$, where $Q_i^-$ and $Q_i^+$ respectively denote the distribution of $\alpha^T_iX-\eps_\alpha R$ and $\alpha^T_iX+\eps_\alpha R$. Now returning to  $g$, and using that $\Psi\in\mathcal{M}_{K}$, we can see that 
\begin{eqnarray}\label{eq: encadPsi}
\Psi^L_j(\alpha_i^T x-\eps_\alpha R)   \le  g(x)  \le \Psi^U_j(\alpha_i^T x+\eps_\alpha R) 
\end{eqnarray}
for some $j=1,\dots,N'$ and  all $x \in \mathcal{X}$. 
We will show that there exists  $B > 0$  depending only on $\overline q, R$ such that  the new bracket $[\Psi^L_j(\alpha_i^T x-\eps_\alpha R), \Psi^U_j(\alpha_i^T x+\eps_\alpha R) ],   \ x \in \mathcal{X}$ satisfies 
\begin{eqnarray}\label{eq: brackg}
&&\Bigg(\int_{\mc X} \Big(\Psi_{j}^U(\alpha_i^T x + \eps_\alpha R)-\Psi_{j}^L(\alpha_i^T x-\eps_\alpha R)\Big)^2 d\mathbb Q(x)\Bigg)^{1/2}
\leq  B \eps.
\end{eqnarray}
 It follows from the Minkowski inequality that the left-hand side in \eqref{eq: brackg} is at most
\begin{eqnarray}\label{eq: Mink} \notag
&& \Bigg(\int_{\mathcal{X}} \Big(\Psi(\alpha_i^T x  - \eps_\alpha R)-\Psi^L_j(\alpha_i^T x - \eps_\alpha R)\Big)^2d\mathbb Q(x) \Bigg)^{1/2} \notag \\
&& \qquad +\Bigg(\int_{\mathcal{X}} \Big(\Psi^U_j(\alpha_i^T x+\eps_\alpha R)-\Psi(\alpha_i^T x+\eps_\alpha R)\Big)^2d\mathbb Q(x) \Bigg)^{1/2} \notag \\
&&\qquad\qquad +\Bigg(\int_{\mathcal{X}}  \Big(\Psi(\alpha_i^T x+\eps_\alpha R)-\Psi(\alpha_i^T x-\eps_\alpha R)\Big)^2d\mathbb Q(x)\Bigg)^{1/2}.
\end{eqnarray}
We have
\begin{eqnarray*}
\int_{\mathcal{X}} \Big(\Psi(\alpha_i^T x  - \eps_\alpha R)-\Psi^L_j(\alpha_i^T x - \eps_\alpha R)\Big)^2d\mathbb Q(x)  & =  &  \int_{\mathbb R}  \left(\Psi(t)  - \Psi^L_j(t)  \right)^2  dQ_i^-(t)  \\
& \le & \eps^2
\end{eqnarray*}
and a similar bound is found for the square of the second integral in (\ref{eq: Mink}).  Hence, the left-hand side in \eqref{eq: brackg} is less than or equal to
\begin{eqnarray*}
2\eps +\Bigg(\int_{\mathbb R}  \Big(\Psi(t+\eps_\alpha R)-\Psi( t-\eps_\alpha R)\Big)^2d Q_i(t)\Bigg)^{1/2}
\end{eqnarray*}
where $Q_i$ is the distribution of $\alpha_i^TX$.  
By monotonicity of $\Psi$ and the fact that it is bounded in absolute value by $K$, 
we can write 
\begin{eqnarray*}
\int_{\mathbb R}  \Big(\Psi(t+\eps_\alpha R)-\Psi( t-\eps_\alpha R)\Big)^2d Q_i(t)
&\le&2K \int_{\mathbb R} \Big(\Psi(t+\eps_\alpha R)-\Psi( t-\eps_\alpha R)\Big)d Q_i(t)\\
&\le&
    2K\overline q \int_{-R}^{R}  \Big(\Psi(t + \eps_\alpha R)  - \Psi(t - \eps_\alpha R)\Big) dt. 
\end{eqnarray*}
with $\overline q$ an upper bound of the density of $Q_i$, that is supported on $[-R,R]$, with respect to the Lebesgue measure. 
This is at most
\begin{eqnarray*}
2K  \overline q\left(  \int_{R - \eps_\alpha R}^{R+\eps_\alpha R} \Psi(t)  dt  -  \int_{-R-\eps_\alpha R}^{-R+\eps_\alpha R}  \Psi(t)  dt  \right)  
 \le 8 \overline q R \eps^2,
\end{eqnarray*}

using that $\eps_\alpha  =  \eps^2/K^2$. Hence,
\begin{eqnarray}\label{eq: Entalpha}
\Bigg(\int_{\mathcal{X}}  \Big(\Psi(\alpha_i^T x+\eps_\alpha R)-\Psi(\alpha_i^T x-\eps_\alpha R)\Big)^2d\mathbb Q(x) \Bigg)^{1/2}  \le (8 \overline q R)^{1/2}\eps.
\end{eqnarray}
Combining the inequalities above, we get the claimed inequality in (\ref{eq: brackg})  with 
$B=2+(8 \overline q R)^{1/2}$. It follows that 
\begin{eqnarray}\label{eq: entropyGNN'}
H_B(B\eps, \mc G_{K}, \|\,.\,\|_{\mathbb Q}  ) &\leq  &  \log N+\log N'  \\ \notag
&\le  & d \log \left( A K^2\eps^{-2}  \right)  +  A   K\eps^{-1} \\ \notag
& \le &   K\eps^{-1}\Big(d A^{1/2}  + A \Big) 
\end{eqnarray}
since $\log x\leq \sqrt x$ for all $x>0$. The first assertion of  Lemma \ref{lem: entropy} follows.

To prove the second assertion,   we need to build brackets for the class of  functions $(x,y)\mapsto yg(x)$, $x\in\mc X,y\in\RR$, and then for the class of functions $g^2,$  with $g \in \mathcal G_{K}$. In the following, we denote the former class by $\mathcal{G}_1$ and the latter by $\mathcal{G}_2$  with which we begin. Note that $g^2(x)=  s(x) = \Psi^2(\alpha^Tx) = h(\alpha^Tx)$ for some function $h$ that is either monotone non-decreasing, monotone non-increasing or $U$-shaped depending on the sign of $\Psi$.  Hence, the function $h$ can be always decomposed into the difference of two monotone functions that are bounded by $K^2$. If $K^2>\eps$ (which holds for all $\eps>0$ and $K>\eps$ such that $K>1$), we can use similar arguments as above  to conclude that there exists a  universal constant $B_0 > 0$ such that 
\begin{equation}\label{eq: entG2bis}
\displaystyle H_B\Big(\eps, \mc G_2, \|\,.\,\|_{\QQ} \Big)\leq \frac {B_0K^2 d(1+\sqrt{\overline q R})}{\eps}.
\end{equation}
Using the fact that any element $s \in \mc G_2$ satisfies 
\begin{eqnarray*}
\int_{\mathbb{R}} \int_{\mathcal{X}}  s^2(x)  d\PP(x, y)  &=  & \int_{\mathcal{X}}  s^2(x) d\QQ(x),
\end{eqnarray*}
it follows that  
\begin{equation}\notag
\displaystyle H_B\Big(\eps, \mc G_2, \|\,.\,\|_{\PP}  \Big)\leq \frac {B_0K^2 d(1+\sqrt{\overline q R})}{\eps}.
\end{equation}
Now we turn to  $\mc G_1$. With $N=N_B(\eps, \mc G_{K},  \|\,.\,\|_{\QQ})$, we will denote by $\{(g_i^L, g_i^U),i\in\{1,\dots,N\}\}$  a  cover of $\eps$-brackets for $ \mathcal{G}_{K}$. For all $i =1, \ldots, N$, define
\begin{equation}\label{eq: bracketG1}
k_i^U(x,y)=\begin{cases}
yg_i^U(x)&\mbox{if }y\geq 0\\
yg_i^L(x)&\mbox{if }y\leq 0
\end{cases},
\qquad
k^L_i(x,y)=\begin{cases}
yg^L_i(x)&\mbox{if }y\geq 0\\
yg^U_i(x)&\mbox{if }y\leq 0
\end{cases}.
\end{equation}
Now, take $g\in \mathcal{G}_{K}$ and let  $i  \in \{1, \ldots, N\}$ such that $g_i^L\leq g\leq g_i^U$. Then, we have 
$k^L_i(x,y)\leq yg(x) \leq k_i^U(x,y)$
so that $\{(k_i^L, k^U_i), \ i\in\{1,\dots,N\}\}$ form a bracketing cover for $\mc G_{1}$. We will now compute its size. We have that
\begin{eqnarray*}
  \int_{\RR\times \mathcal{X}} \left(k_i^U(x, y)- k^L_i(x, y) \right)^2 d\mathbb P(x, y)  &=& \int_{\RR\times\mathcal{X}}  y^2\times \left(g_i^U(x)- g^L_i(x)\right)^2 d\mathbb P(x,y)\\
&\leq &2 a_0\int_{\mathcal{X}} \left(g_i^U(x)- g^L_i(x)\right)^{2} d\mathbb Q(x)\\
&\le& 2 a_0  \ \eps^2
\end{eqnarray*}
where $a_0$ is taken from \eqref{eq: hypexpo}. Hence,
\begin{eqnarray} \notag \label{eq: entropyG1}
H_B\Big(\sqrt{2a_0}\eps,  \mc G_{1}, \|\,.\,\|_{\PP} \Big)&\leq& H_B\Big(\eps, \mc G_{K},  \|\,.\,\|_{\QQ} \Big)\\
&\le &\frac{A_1 K d(1+\sqrt{\overline q R})}{\eps},
\end{eqnarray}
using the first assertion of the lemma.  But for all $\eps>0$, we have
\begin{equation}\label{eq: 4.8}
H_B(\eps, \mc F_{K}, \|\,.\,\|_{\PP})\leq H_B(\eps/2, { \mc G}_{1},\|\,.\,\|_{\PP})+H_B(\eps, { \mc G}_{2}, \|\,.\,\|_{\PP})
\end{equation}
and hence we obtain the second assertion of the lemma, which completes the proof.
\qed

\subsection {Proof of Lemma \ref{lem: entropyg-barg}} 
Let  $g\in\mc G_{Kv}$ and $\Psi\in{\cal M}_K,\alpha\in{\cal S}_{d-1}$  such that $g(x)=\Psi(\alpha^T x)$. 
We shall show below that $\vert \Psi(t) \vert \le 2K_0$ except on a region of small size.  To do that,
we shall use the condition \eqref{eq: defv} together with the triangle inequality to get
\begin{eqnarray*}
\int_{\mathcal{X}} \mathds{1}_{\{|\Psi(\alpha^Tx)|>2K_0\}}d\mathbb Q(x)&\leq&\int_{\mathcal{X}}\mathds{1}_{\{|\Psi(\alpha^Tx)-\Psi_0(\alpha_0^Tx)|>K_0\}}d\mathbb Q(x)\\
&\le&\int_{\mathcal{X}} \left(\frac{\Psi(\alpha^Tx)-g_0(x)}{K_0}\right)^2  d\mathbb Q(x)
\le v^2K^{-2}_0.
\end{eqnarray*}
With $a,b$ the boundaries of the interval $\{\alpha^Tx,x\in\mc X\}$ and $Q_\alpha$  the distribution of $\alpha^TX$ we have
\begin{eqnarray*}
\int_{\mathcal{X}}\mathds{1}_{\{|\Psi(\alpha^Tx)|>2K_0\}}d\mathbb Q(x) &=& \int_{a}^{b}\mathds{1}_{\{|\Psi(t)|>2K_0\}}dQ_\alpha(t)\\
&\geq&\underline q\int_{a}^{b}\mathds{1}_{\{|\Psi(t)|>2K_0\}}dt.
\end{eqnarray*}
 Combining the two preceding displays, we conclude that
\begin{eqnarray}\label{eq: whylb}
\int_{a}^{b}\mathds{1}_{\{|\Psi(t)|>2K_0\}}dt\leq   D_2 v^2
\end{eqnarray}
where $\displaystyle D_2  =\underline q^{-1}K^{-2}_0  $.   By monotonicity of $\Psi$, this means that $|\Psi(t)|\leq 2K_0$ for all $t$ in the interval
$[a+ D_2 v^2,b-D_2 v^2].$
Now, from \eqref{eq:barg}, $g(x) - \bar{g}(x)$ takes the form of $h(\alpha^T  x)$   where $h\in{\cal M}_K$ is such that 
\begin{equation}\notag
h(t) =\begin{cases}
0&\mbox{ if }|\Psi(t)|\le 2K_0\\
\Psi(t)  + 2K_0&\mbox{ if }\Psi(t)  < -2 K_0\\
\Psi(t)  - 2K_0&\mbox{ if }\Psi(t) > 2K_0.\end{cases}
\end{equation}
Hence, for $t\in[a,b]$ we can have $h(t)\neq0$ only for $t\in[a, a+ D_2 v^2]\cup[ b-D_2 v^2,b]$.
Consider $\{\alpha_1, \ldots, \alpha_N\}$ the grid providing a $\eps_\alpha$-cover for $\mathcal{S}_{d-1}$ with   $\eps_\alpha = \eps^2/K^2$ and $N \le (A \eps^{-1}_\alpha)^d$, see Lemma \ref{lem:partsphere}. Similar to the proof of Lemma \ref{lem: entropy}, with $\alpha_i$ such that $\|\alpha-\alpha_i\|\leq\eps_\alpha$, we then have
\begin{equation}\label{eq: encadg}
h(\alpha_i^T x-\eps_\alpha R)\leq g(x)-\bar g(x)\leq h(\alpha_i^T x+\eps_\alpha R)
\end{equation}
for all $x  \in \mathcal{X}$, where   $h$ is considered on the  interval $[a_i,b_i]$ where 
$$a_i=\inf\{\alpha_i^Tx-\eps_\alpha R,x\in\mc X\}\quad\mbox{and}\quad b_i=\sup\{\alpha_i^Tx+\eps_\alpha R,x\in\mc X\}.$$ 
Note that the support of $\alpha_i^TX$, $\alpha_i^TX-\eps_\alpha R$ and $\alpha_i^TX+\eps_\alpha R$ are all included in $[a_i,b_i]$, and we have $|a-a_i|\le 2\eps_\alpha R$ and $|b-b_i|\le2\eps_\alpha R$. From what precedes, 
for $t\in[a_i,b_i]$  we can have $h(t)\ne 0$ only for $t\in \mathcal{I}_{i,1}\cup\mathcal{I}_{i,2}$
where
$$\mathcal{I}_{i,1}   =  \Big[  a_{i}, a_i  +D_2v^2+2\eps_\alpha R \Big]  \quad\mbox{and}\quad   \mathcal{I}_{i,2}=  [b_{i} - D_2v^2-2\eps_\alpha R, b_{i}  \Big ]$$
have length at most $2D_2v^2$ under the assumption that $Kv>\eps K_0\sqrt{2R\underline q}$.
Hence, we only need  to construct brackets for the class of monotone functions on  $[a_i,b_i]$ that are bounded by $K$ and constant equal to zero outside $\mathcal{I}_{i,1}\cup\mathcal{I}_{i,2}$.   This can be done by using Lemma \ref{lem:entropyMono} with $Q$ denoting the uniform distribution on  $\mathcal{I}_{i,1} \cup
\mathcal{I}_{i,2} $: it follows from that lemma that with $N_i \le \exp(2A\sqrt{D_2} Kv/ \eps)  $, we can find brackets $(h_j^L,h_j^U),j=1,\dots,N_i$ such that every function in the class belongs to $[h_j^L, h_j^U]$ for some $j$, and
\begin{eqnarray}\label{eq: bracketh}\notag
\int_{\RR}(h_j^L(t)-h_j^U(t))^2dt&\leq &
4D_2v^2\int_{\mathcal{I}_{i,1}\cup \mathcal{I}_{i,2}}(h_j^L(t)-h_j^U(t))^2dQ(t)\\ 
&\leq&\eps^2
\end{eqnarray}
for all $j$.
Note that we have omitted writing the dependence on $i$ for the functions in the brackets. 

Let $j\in\{1,\dots,N_i\}$ such that
$h^L_{j}\le h\le h^U_{j}$
on $[a_i,b_i]$.
By \eqref{eq: encadg} we have
\begin{eqnarray*}
b^L(x) \equiv  \ \ h_j^L(\alpha_i^T x-\eps_\alpha R)\leq g(x)-\bar g(x)\leq h_j^U(\alpha_i^T x+\eps_\alpha R) \ \  \equiv b^U(x)
\end{eqnarray*}
for all $x\in\mc X$ and it remains to compute the size of the obtained brackets.
By the Minkowski inequality, with $Q_i$ the distribution of $\alpha^T_iX$ we have 
\begin{eqnarray*}
 \Vert b^U-b^L \Vert_{\QQ}
&=&\bigg(\int_{\RR}  \Big(  h^U_{j}(t + \eps_\alpha  R)  - h^L_{j}( t - \eps_\alpha  R)  \Big)^2  dQ_i(t) \bigg)^{1/2}  \\
&\leq&\bigg(\overline q\int_{a_i+\eps_\alpha  R}^{b_i-\eps_\alpha  R}  \Big(  h^U_{j}(t + \eps_\alpha  R)  - h^L_{j}( t - \eps_\alpha  R)  \Big)^2  dt \bigg)^{1/2}  \\
& \le& \bigg(\overline q\int_{a_i+\eps_\alpha  R}^{b_i-\eps_\alpha  R}  \Big(  h^U_{j}( t + \eps_\alpha  R)  - h_j^U( t - \eps_\alpha  R)  \Big)^2  dt \bigg)^{1/2}  \\
&&  \qquad + \bigg(\overline q\int_{a_i+\eps_\alpha  R}^{b_i-\eps_\alpha  R}   \Big(  h^U_{j}(t - \eps_\alpha  R)  - h_j^L(t - \eps_\alpha  R)  \Big)^2  dt \bigg)^{1/2}  \\
\end{eqnarray*}
Since $h_j^L$ and $h_j^U$  can be choosen monotone on $[a_i,b_i]$ and bounded in absolute value by $K$ we conclude from \eqref{eq: bracketh} that
\begin{eqnarray*}
 \Vert b^U-b^L \Vert_{\QQ} &\leq & \bigg(2K\overline q\int_{a_i}^{b_i-2\eps_\alpha  R}  \Big(  h_j^U( t + 2\eps_\alpha  R) - h^U_{j}( t)  \Big)  dt \bigg)^{1/2}  + \sqrt{\overline q}\eps\\
& \leq & \left(8\eps_\alpha RK^2 \overline q \right)^{1/2}  + \sqrt{\overline q}\eps.
\end{eqnarray*} 
Since by definition, $\eps_\alpha=\eps^2/K^2$,  this means that we  have  $C\eps$-brackets 
where $C$ depends on $\overline q$ and $R$ only. 
Hence,
\begin{eqnarray*}
H_B\Big(C\eps, \bar{ \mc G}_{Kv},  \Vert \cdot \Vert_{\QQ}  \Big)  & \leq  &     \log(N_i) + \log(N)
\ \ \le \ \   \frac{2A\sqrt{D_2} K v}{\eps}   + d \log \frac{A  K^2}{\eps^2} 
\end{eqnarray*}
and the first assertion of the lemma follows.

To prove the second assertion, recall that  
$(f-\bar f)(x,y)=y(g(x)-\bar g(x))-\frac 12(g^2(x)-\bar g^2(x)).$
We need  then to build brackets for the functions $(x,y)\mapsto y(g(x)-\bar g(x))$, and those for the functions $x \mapsto g^2(x)-\bar g^2(x).$ 
The construction of the brackets goes along the same line as the construction of the brackets for the classes $\mathcal{G}_1$ and $\mathcal{G}_2$  in the proof of Lemma \ref{lem: entropy} above, where for the latter class, we use the fact that  all functions in the class take the form $h(\alpha^Tx)$ where $h$ is either monotone or U-shaped, and vanishes when $|g(\alpha^Tx)|\le 2K_0$, which implies that the function vanishes except on at most two intervals of maximal length $2D_2v^2$.

Hence, we can find  $A_1>0$ and $A_2 > 0$ depending on $K_0$, $\overline q$, $\underline q$, $R$ and $a_0$  such that  
\begin{eqnarray*}
H_B\Big(\eps, \bar{ \mc F}_{Kv},  \|\,.\,\|_{\PP} \Big)  \le  \frac{ A_1 K^2v}{\eps}  + d \log\frac{A_1K^4}{\eps^2} 
\end{eqnarray*}
for $K>1\vee\eps$ such that $K^2v>A_2\eps$. 
This 
completes the proof of Lemma \ref{lem: entropyg-barg}. 
\qed

\subsection{Proof of Lemma \ref{Bernsteinentropy}}  \label{proof: Bernstein}
We start by noting that entropy bound on the class $\widetilde{\mc F}_{Kv}$ is smaller than the entropy bound for the class $\widetilde{\mc F}_{K}$ as a consequence of inclusion of the former class in the latter. We will now show that the upper bound with respect to the Bernstein norm for the class $\widetilde{\mc F}_{Kv}$ is of the claimed order. 
Let $N_1  =  N_B(\eps , \mathcal{G}_K, \|\,.\,\|_{\QQ}   )  $ and $N_2   =  N_B(\eps , \mathcal{G}_2, \|\,.\,\|_{\QQ} )$, where
$\mathcal{G}_2$ is the class of functions $\{x \mapsto  g^2(x), g \in \mathcal{G}_{K} \}$.
Consider brackets  $[g^L_j, g^U_j], \ j=1, \ldots, N_1$  covering $\mathcal{G}_K$ and $[s^L_i, s^U_i],  \ i =1, \ldots,    N_2$ covering $\mathcal{G}_2$.
Note that $g^L_i$ and $g^U_i$ can be always taken to be bounded by $K$, because otherwise we can take instead $g^L_i \vee (-K) $ and $g^U_i \wedge K$.  The same thing holds for $s^L_i$ and $s^U_i$ which can be taken to be bounded by $K^2$. Let $f  \in \mathcal{F}_{Kv}$  and $\tilde{C} > 0$ a fixed constant to be chosen later.  Then, there exists $(i, j ) \in \{1, \ldots, N_1 \}  \times \{1, \ldots, N_2  \}$ such that $f^L_{i, j}\le f\le f^U_{i, j}$ and 
\begin{eqnarray*}
f^L_{i, j}(x, y)=
\begin{cases}
 y  g^L_j(x)  - \frac{1}{2}  s^U_i(x), \ \  \  \textrm{if $y \ge 0$}  \\
 y  g^U_j(x)  - \frac{1}{2}  s^U_i(x), \ \  \  \textrm{if $y < 0$}
\end{cases}\mbox{\ ;\ }
f^U_{i, j}(x, y)=
\begin{cases}
 y  g^U_j(x)  - \frac{1}{2}  s^L_i(x), \ \  \  \textrm{if $y \ge 0$}  \\
 y  g^L_j(x)  - \frac{1}{2}  s^L_i(x), \ \  \  \textrm{if $y < 0$}.
\end{cases}
\end{eqnarray*}
Now note that for a given function $h$  such that $h^k$ is $\PP$-integrable for all $k \ge 2$, we can write  $\Vert h \Vert^2_{B, \PP} =  2 \sum_{k=2}^\infty (\int  \vert h \vert^k  d\PP )/k!$.  Hence,  by convexity of $x\mapsto x^k$ for $k\ge 2$, we have 
\begin{eqnarray*}
\left \Vert \frac{f^U_{i, j}  -   f^L_{i, j}}{\tilde C} \right\Vert^2_{B, \PP} \le 2 \sum_{k=2}^\infty \frac{2^{k-1}}{k!  \tilde{C}^k }   \int_{\mathcal{X}\times\RR}  \bigg\{\vert  y  \vert^k \Big|g^U_j(x)   -  g^L_j(x)\Big|^k   +  \frac{1}{2^k}  \Big| s^U_i(x)  -  s^L_i(x) \Big|^k   \bigg \}  d \mathbb{P}(x, y)  .
\end{eqnarray*}
Integrating first with respect to $y$ using the assumption (A3) yields that the right-hand side is at most
\begin{eqnarray*}
&&2  \sum_{k=2}^\infty \frac{2^{k-1}}{k!  \tilde{C}^k }   \bigg\{  a_0  M^{k-2}  k!  \times (2K)^{k-2}  \int_{\mathcal{X}} |g^U_j   -  g^L_j|^2 d\mathbb{Q}  +  \frac{(2K^{2})^{(k-2)}}{2^k} \int_{\mathcal{X}}  |s^U_j   -  s^L_j|^2 d\mathbb{Q} \bigg \}  \\
&& \le  2  \sum_{k=2}^\infty \frac{2^{k-1}}{k!  \tilde{C}^k }   \bigg\{  a_0  M^{k-2}  k!  \times (2K)^{k-2}  \eps^2 + \frac{(2K^{2})^{(k-2)}}{2^k }  \eps^2  \bigg \}. 
\end{eqnarray*}
Hence,
\begin{eqnarray*}
\Big \Vert (f^U_{i, j}  -   f^L_{i, j}) \tilde{C}^{-1}  \Big \Vert^2_{B, \PP} 
& \le& \frac{2 }{\tilde{C}^2} \Bigg (2 a_0  \sum_{k=2}^\infty \left(\frac{4 M K}{\tilde{C}}\right)^{k-2}    +   \frac{1}{4}\sum_{k=2}^\infty \left(\frac{2K^2}{\tilde{C}}\right)^{k-2}  \frac{1}{(k-2)!}  \Bigg ) \  \eps^2,
\end{eqnarray*}
using  the fact that $k! \ge 2 (k-2)! $ for all $k \ge 2$. We conclude that
\begin{eqnarray*}
\Big \Vert (f^U_{i, j}  -   f^L_{i, j}) \tilde{C}^{-1}  \Big \Vert^2_{B, \PP} & \le& 
\frac{2 }{\tilde{C}^2}  (2 a_0 \vee 1/4) \ \Big(  \frac{1}{1-4MK/\tilde{C}}  +  e^{2K^2/\tilde{C}}  \Big)  \  \eps^2.
\end{eqnarray*} 
Since $K\ge 2$, the choice  $\tilde{C}  = 4 M K^2$ yields
\begin{eqnarray}\label{eq: B}\notag
 \Vert (f^U_{i, j}  -   f^L_{i, j}) \tilde{C}^{-1} \Vert^2_{B, \PP}   &\le   &  \frac{2}{\tilde{C}^2}  \Big(2 a_0 \vee 1/4\Big)   \ \Big(  \frac{K}{K-1}  +  e^{(2M)^{-1}}  \Big)  \eps^2    \\ 
& \le &  
B^2 K^{-4}\eps^2
\end{eqnarray}
where $B$ depends on $ a_0$ and $M$ only. Using \eqref{eq: entG2bis} and the first assertion of Lemma \ref{lem: entropy}, this means that there exists a universal constant $A_2$ such that
\begin{eqnarray}\label{eq: ??}
H_B \Big(\frac{B\eps}{K^2}, \widetilde{ \mc F}_{Kv}, \|\,.\,\|_{B,\PP} \Big)&\leq& \log N_1+\log N_2  \ \ \leq \ \ \frac{A_2  K^2d(1+\sqrt{\overline qR})}{\eps}.
\end{eqnarray}
This in turn implies that 
\begin{eqnarray*}
H_B \Big(\eps, \widetilde{ \mc F}_{Kv}, \|\,.\,\|_{B,\PP} \Big)& \leq&   \frac{A_2 B K^2 d(1+\sqrt{\overline qR})}{\eps K^2}  \ \  = \ \ \frac{A_1 d(1+\sqrt{\overline qR})}{\eps}
\end{eqnarray*}
as claimed in the statement of the lemma. 


To show the second claim, we will use again the series expansion of the Bernstein norm. Similar as above, using that $g_0$ is bounded  by $K_0\le K$, and that  for arbitrary $\tilde{f}  = (f - f_0 ) \tilde{C}^{-1} \in \widetilde{\mathcal{F}}_{Kv}$, the corresponding $g \in \mathcal{G}_{Kv}$ satisfies \eqref{eq: defv}, we obtain 
\begin{eqnarray}\label{eq: gtoBernsteinf}\notag
\Vert \tilde{f}  \Vert^2_{B, \PP}
& \le  &    \sum_{k=2}^\infty \frac{2^{k}}{k!  \tilde{C}^k }   \bigg \{  a_0  M^{k-2}  k!  (2K)^{k-2} \int_{\mathcal{X}}  (g  -  g_0)^2 d\mathbb{Q}  +  \frac{(2K^{2})^{(k-2)}}{2^k} \int_{\mathcal{X}}  (g^2   -  g^2_0)^2 d\mathbb{Q}\bigg \}  \\ \notag
& \le  &    \sum_{k=2}^\infty \frac{2^{k}}{k!  \tilde{C}^k }   \bigg\{  a_0  M^{k-2}  k!   (2K)^{k-2} \int_{\mathcal{X}} (g   -  g_0)^2 d\mathbb{Q} +\frac{2(2K^{2})^{k-1}}{2^k} \int_{\mathcal{X}}  (g  -  g_0)^2 d\mathbb{Q} \bigg \}  \\ \notag
&  \le&\bigg( \frac{4 a_0}{\tilde{C}^2}  \sum_{k=2}^\infty \left(\frac{4MK}{\tilde{C}}\right)^{k-2}   +  \frac{(4K)^2}{\tilde{C}^2} \sum_{k=2}^\infty \left(\frac{4K^2}{\tilde{C}}  \right)^{k-2}  \frac{1}{k!}  \bigg)  v^2  \\
& \le   & \bigg(  \frac{a_0}{2 M^2 K^4 }  + \frac{1}{4M^2K^2}  e^{1/M}  \bigg)   v^2, 
\end{eqnarray}
using that $K \ge 2$. The second claim follows.

Using the same arguments as above in combination with the entropy bound for $\bar{ \mc F}_{Kv}$  obtained in Lemma \ref{lem: entropyg-barg} 
we can show that 
\begin{eqnarray*}
H_B \Big(\eps, \widetilde{\bar{ \mc F}}_{Kv}, \|\,.\,\|_{B,\PP} \Big)\leq   \frac{A_1 v}{\eps} + d \log\left( \frac{A_1 }{\eps^2} \right) 
\end{eqnarray*}
at  the cost of increasing the constant $A_1$.  To show the second assertion for the elements of $\widetilde{\bar{ \mc F}}_{Kv}$, we can use again the same arguments as for the class  $\widetilde{\mathcal{F}}_{Kv}$. Indeed, the condition  $K \ge 2K_0 \vee 2$ implies that  $\max(\vert g \vert, \vert \bar{g}\vert ) \le K$  since $\vert \bar{g} \vert \le 2K_0$. Moreover, with $\tilde{C} = 4MK^2$ we get for any element $\tilde{f}  \in \widetilde{\bar{ \mc F}}_{Kv}$ that
\begin{eqnarray}\notag \label{eq: majBern}
\Vert \tilde{f} \Vert^2_{B, \PP}  
& \le  &  \left(  \frac{a_0}{32 M^2 }  + \frac{1}{16M^2}  e^{1/M}  \right)  \int_{\mathcal{X}} \left(g(x)   -  \bar{g}(x)\right)^2 d\QQ(x) \\
& \le & 4 \left(  \frac{a_0}{32M^2 }  + \frac{1}{16M^2}  e^{1/M}  \right)  v^2
\end{eqnarray}
where in the last line we used the fact that by convexity of $x \mapsto x^2$,
\begin{eqnarray}\label{ineqg-barg}
\int_{\mathcal{X}} \left(g(x)   -  \bar{g}(x)\right)^2 d\QQ(x)  &\le  &  2 \int_{\mathcal{X}} ( g(x)  - g_0(x))^2 d\QQ(x)  + 2 \int_{\mathcal{X}} ( \bar{g}(x)  - g_0(x))^2 d\QQ(x), \notag\\
&  \le  &  4 \int_{\mathcal{X}} ( g(x)  - g_0(x))^2 d\QQ(x),
\end{eqnarray}
as $ ( \bar{g}  - g_0)^2   \le( g  - g_0)^2 $ by  definition of $\bar{g}$.  This completes the proof of Lemma \ref{Bernsteinentropy}.  \hfill $\Box$

\subsection{Proof of Theorem \ref{theo: ratelog}}\label{sec: proofRate}

In the sequel, we assume that  the assumptions of the theorem hold.   Below, we give  a uniform bound for the centered process $\MM_n-\MM$, with $\MM_n$ and $\MM$ as in \eqref{eq: Mn} and \eqref{eq: M} respectively. In the sequel, the notation $\lesssim$ means ``is bounded up to an absolute constant". Moreover, the capital $E$ denotes outer expectation in cases when we consider expectation of a random variable which we have not proved to be measurable.

\begin{prop}\label{prop:centproc}
Let $K  > 2\vee(2K_0)$. 
Then, for all $v \in (0, 2 K]$, there exists $A>0$ that depends only on $a_0$ and $M$ such that 
\begin{eqnarray}\label{eq: prop61}
\sqrt nE\bigg[\sup_{g \in \mathcal{G}_{Kv}} \Big \vert (\mathbb{M}_n - \mathbb{M})g-(\mathbb{M}_n - \mathbb{M})g_0  \Big \vert   \bigg] \leq A  d(1+\sqrt{\overline q R})\phi_n(v) 
\end{eqnarray}
where 
$
\phi_n(v)   =    v^{1/2}  K^{5/2} (1 +  K^{1/2}v^{-3/2}n^{-1/2} ).
$
\end{prop}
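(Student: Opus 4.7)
The plan is to recast the left-hand side as the expectation of an empirical process supremum, and then apply the Bernstein-norm maximal inequality (Lemma 3.4.3 of van der Vaart and Wellner) to the rescaled class $\tilde{\mathcal{F}}_{Kv}$ introduced in \eqref{eq: tildeF}, feeding in the bracketing estimates already assembled in Lemma~\ref{Bernsteinentropy}. Using the Bernstein norm (rather than the plain $L^2(\PP)$ norm) is essentially forced on us here, because $Y$ is unbounded, so the functions in $\mathcal{F}_{Kv}$ are unbounded, and the classical $L^2(\PP)$-bracketing maximal inequality is not directly applicable.

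The first step is translation. With $f(x,y)=yg(x) - g^2(x)/2$ and $f_0(x,y) = yg_0(x) - g_0^2(x)/2$, the definitions \eqref{eq: Mn} and \eqref{eq: M} give $\MM_n g = \PP_n f$ and $\MM g = \PP f$, so the left-hand side of \eqref{eq: prop61} equals $E\sup_{f\in \mathcal{F}_{Kv}} |\GGG_n(f - f_0)|$ with $\GGG_n = \sqrt n(\PP_n - \PP)$. Note $g_0 \in \mathcal{G}_{Kv}$, since $K > 2K_0$ forces $|g_0|\le K$ and $D(g_0, g_0)=0 \le v$; hence $f_0 \in \mathcal{F}_{Kv}$ and the shifted class $\mathcal{F}_{Kv}-f_0$ contains the zero function, as required for the maximal inequality.

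Next comes the application of the maximal inequality. Set $\tilde C = 4MK^2$, $B := d(1+\sqrt{\overline q R})$, and work with $\tilde{\mathcal{F}}_{Kv}$ as in \eqref{eq: tildeF}. Lemma~\ref{Bernsteinentropy} supplies $H_B(\eps, \tilde{\mathcal{F}}_{Kv}, \|\cdot\|_{B,\PP}) \le A_1 B/\eps$ and $\sup_{\tilde f\in \tilde{\mathcal{F}}_{Kv}}\|\tilde f\|_{B,\PP} \le A_2 v$. With $\delta := A_2 v$, Lemma 3.4.3 of van der Vaart and Wellner will yield
$$E\!\sup_{\tilde f \in \tilde{\mathcal{F}}_{Kv}}|\GGG_n \tilde f| \;\lesssim\; \tilde J(\delta)\!\left(1 + \frac{\tilde J(\delta)}{\delta^2 \sqrt n}\right),\qquad \tilde J(\delta) := \int_0^\delta \sqrt{1 + H_B(\eps, \tilde{\mathcal{F}}_{Kv}, \|\cdot\|_{B,\PP})}\, d\eps.$$
Using $\sqrt{1+x} \le 1 + \sqrt x$ gives $\tilde J(A_2 v) \lesssim v + \sqrt{Bv}$; since $B \geq 1$ and $v \le 2K$, the $v$-term is absorbed into $\sqrt{Bv}$ up to a constant, leaving $\tilde J(A_2 v) \lesssim \sqrt{Bv}$. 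Substituting back produces $E\sup_{\tilde f}|\GGG_n \tilde f| \lesssim \sqrt{Bv} + B/(v\sqrt n)$.

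Finally, rescaling by $\tilde C = 4MK^2$ to return to the shifted class $\mathcal{F}_{Kv} - f_0$, and using $\sqrt B \le B$ together with the trivial bounds $K^2 \le K^{5/2}$ and $K^2 \le K^3$ (valid since $K>2$), one obtains
$$\sqrt n\, E \!\sup_{g\in\mathcal{G}_{Kv}} |(\MM_n-\MM)g - (\MM_n-\MM)g_0| \;\lesssim\; B\!\left(K^{5/2}v^{1/2} + \frac{K^3}{v\sqrt n}\right) \;=\; B\, \phi_n(v),$$
which is the claimed bound. The main technical obstacle is the correct deployment of Lemma 3.4.3 and the careful computation of the Bernstein bracketing integral; once Lemma~\ref{Bernsteinentropy} is in hand, the rest is essentially bookkeeping of constants and powers of $K$.
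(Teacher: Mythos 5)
Your proposal follows essentially the same route as the paper: translate $(\MM_n-\MM)g-(\MM_n-\MM)g_0$ into $\GGG_n(f-f_0)$, rescale by $\tilde C=4MK^2$, feed the Bernstein-norm entropy and envelope bounds of Lemma~\ref{Bernsteinentropy} into Lemma 3.4.3 of van der Vaart and Wellner, and then track powers of $K$. One bookkeeping step is wrong as justified: with $B=d(1+\sqrt{\overline q R})$, the absorption $v\lesssim\sqrt{Bv}$ requires $v\lesssim B$, which does not follow from $v\le 2K$ and $B\ge 1$ (in the application $K=C\log n$ grows while $B$ is fixed), so the claim $\tilde J(A_2v)\lesssim\sqrt{Bv}$ is false in general. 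The repair is exactly what the paper does: bound $v\le\sqrt{2K}\,v^{1/2}$, so that $\tilde J(A_2v)\lesssim\sqrt{B}\,K^{1/2}v^{1/2}$; carrying the extra $K^{1/2}$ through Lemma 3.4.3 and the rescaling by $4MK^2$ yields $B(K^{5/2}v^{1/2}+K^3v^{-3/2}n^{-1/2}\cdot v^{1/2})=B\phi_n(v)$, so the stated conclusion is unaffected.
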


\begin{proof} \  
Define for $\eta > 0$ and fixed $v \in (0, 2K]$
\begin{eqnarray*}
J(\eta)  =  \int_0^\eta  \sqrt{1  +  H_B\Big(\eps, \widetilde{\mathcal{F}}_{Kv}, \|\,.\,\|_{B, \PP} \Big) } d\eps  
\end{eqnarray*}  
where we recall that $\|\,.\,\|_{B, \PP} $  is the Bernstein norm, and  $\widetilde{\mathcal{F}}_{Kv}$ is defined in \eqref{eq: tildeF}
with  $\tilde{C} = 4M K^2$.
By Lemma \ref{Bernsteinentropy}, there exists a constant $A_2 > 0$ depending only on $a_0$ and $M$  such that $\Vert \tilde{f} \Vert_{B, \PP}  \le A_2 v$ for all $\tilde f\in\widetilde{\mathcal{F}}_{Kv} $. It follows from Lemma 3.4.3  of \cite{vanweak}  (using the notation of that book)  that 
\begin{eqnarray*}
 E[\Vert \mathbb{G}_n \Vert_{\widetilde{\mathcal{F}}_{Kv} }]   \lesssim   J\big(A_2 v\big) \left( 1 +  \frac{J\big(A_2 v \big)}{A^2_2 v^2 \sqrt n}  \right) 
\end{eqnarray*}
where by Lemma \ref{Bernsteinentropy} and the inequality $\sqrt{u + v}  \le \sqrt{u } + \sqrt{v}$ for $u, v \ge 0$ we have that 
\begin{eqnarray*}
J(\eta)  \le \int_0^\eta  \sqrt{1 + \frac{A_1d(1+\sqrt{\overline q R})}{\eps}}  d\eps \le  \eta  + 2 (A_1d(1+\sqrt{\overline q R}))^{1/2}  \eta^{1/2}
\end{eqnarray*}
for all $\eta>0$.  Note that  $ v \in (0, 2K]$  implies that $v  \le v^{1/2}  2^{1/2}  K^{1/2}$, and hence  
$$ J\big(A_2 v \big)  \le \big(A_2  2^{1/2}  K^{1/2}  +  2 (A_1 d(1+\sqrt{\overline q R}))^{1/2} A^{1/2}_2 \big) v^{1/2}  \le A_3 (d(1+\sqrt{\overline q R}))^{1/2}K^{1/2} v^{1/2}$$
 using that $K \ge 1$, where $A_3$ depends only on $a_0$ and $M$.  Hence, by  definition of  $\widetilde{\mathcal{F}}_{Kv}$, which has the same entropy has the class ${\mc F}_{Kv}-f_0=\{f-f_0,f\in{\mc F}_{Kv}\}$, we obtain
\begin{eqnarray*}
 \sqrt nE  \bigg[\sup_{g \in \mathcal{G}_{Kv}} \Big \vert (\mathbb{M}_n - \mathbb{M})(g)-(\mathbb{M}_n - \mathbb{M})(g_0)  \Big \vert   \bigg]    &= & E  [\Vert \mathbb{G}_n \Vert_{\mathcal{F}_{Kv}-f_0} ]  \\
& = &  4MK^2 E  [\Vert \mathbb{G}_n \Vert_{\widetilde{\mathcal{F}}_{Kv}-f_0}]  \lesssim     \phi_n(v)
\end{eqnarray*}
which completes the proof of Proposition \ref{prop:centproc}.
\end{proof}

Now we are ready to give the proof of Theorem \ref{theo: ratelog}.

\begin{proof}[Proof of Theorem  \ref{theo: ratelog}]  \ 
In the sequel, we consider $K=C\log n$ for some $C>0$ that does not depend on $n$, and $v\in(0,2K]$.  It follows from Proposition \ref{prop:centproc} that for all $n$ sufficiently large, we have \eqref{eq: prop61}
where
$$\phi_n(v)=(\log n)^{5/2}\sqrt{v}\left(1+(\log n)^{1/2}v^{-3/2}n^{-1/2}\right)$$
and $A$ depends only on $a_0$, $M$ and $C$.
Since with $D$ taken from \eqref{eq: defD}, we have $D(g,g_0)\leq \|g\|_\infty+\|g_0\|_\infty\le 2K$ for sufficiently large $n$ and all $g\in\mc G_{K}$, the above inequality holds for all $v>0$.
Furthermore, $\widehat g_n$ maximizes $\MM_n g$  over the set of all functions $g$ of the form $g(x)=\Psi(\alpha^Tx), x \in \mathcal{X}$ with $\alpha\in \mathcal S_{d-1}$ and $\Psi$ a non-decreasing function on $\RR$, and it follows from Lemma \ref{lem: LSElog} that with arbitrarily large probablity by choice of $C$, $\widehat g_n$ maximizes $\MM_n g$ over the restricted set $\mc G_{K}$. Hence, we can use Lemma \ref{lem: basic} and Proposition \ref{prop:centproc} above, together with Theorem 3.2.5 in \cite{vanweak} with $\alpha=1/2$ and  $r_n\sim n^{1/3}(\log n)^{-5/3}$, to conclude that $D(\hat g_n,g_0)=O_p^*(n^{-1/3}(\log n)^{5/3}),$ which completes the proof of Theorem \ref{theo: ratelog}.
\end{proof}

\subsection{Proof of Theorem \ref{theo: rateL2}} \label{proof: ratewithoutlog}
Assuming that   (A1)-(A4) hold, we give  a second uniform bound for  $\MM_n-\MM$. The bound is sharper than the one obtained in Proposition \ref{prop:centproc} for the case where all functions $g$ in the considered class of functions satisfy \eqref{eq: defv} for some $v\leq (\log n)^2  n^{-1/3}$. As before, the notation $\lesssim$ means ``is bounded up to an absolute constant".

\begin{prop}\label{lem:centproc2}
Let  $K = C \log n$ for some fixed $C>0$, $v  \in(0, (\log n)^2  n^{-1/3}]$ and $
\phi_n(v)    =    v^{1/2}  \left(1 +  v^{-3/2} n^{-1/2}  \right)
$. 
Then for $n $ large enough we have that 
\begin{eqnarray*}
\sqrt n E   \bigg[\sup_{g \in \mathcal{G}_{Kv} } \Big \vert (\mathbb{M}_n - \mathbb{M})g-(\mathbb{M}_n - \mathbb{M})g_0  \Big \vert   \bigg] \leq A \phi_n(v)
\end{eqnarray*}
where $A$ depends only on $R,a_0,M,\overline q,\underline q$ and $K_0$.
\end{prop}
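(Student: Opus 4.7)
The plan is to sharpen Proposition~\ref{prop:centproc} by exploiting the truncation decomposition $g = \bar g + (g-\bar g)$ from~\eqref{decomp}, yielding $f - f_0 = (\bar f - f_0) + (f - \bar f)$ with $\bar f(x,y)=y\bar g(x)-\bar g^2(x)/2$. I will bound the two resulting empirical processes separately via Lemma~3.4.3 of \cite{vanweak}. The key gain is that $|\bar g|\le 2K_0$ is a constant, so the normalization $\tilde C$ in the Bernstein-norm version of that lemma is $O(1)$ rather than $O(K^2)=O((\log n)^2)$ for the first piece, which is ultimately what removes the $K^{5/2}$ factor present in Proposition~\ref{prop:centproc}.

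For the truncated part, note that $\|\bar g - g_0\|_{\QQ}\le \|g - g_0\|_{\QQ}\le v$ via the pointwise bound $|\bar g - g_0|\le|g - g_0|$ already used in~\eqref{ineqg-barg}. Hence the class $\{\bar f - f_0:\, g\in\mathcal G_{Kv}\}$ embeds into a class to which the first assertion of Lemma~\ref{Bernsteinentropy} applies with $K$ replaced by the constant $2K_0$, giving Bernstein-norm bracketing entropy $\lesssim 1/\eps$, diameter $\lesssim v$, and normalization $\tilde C = 16MK_0^2$ independent of $n$. Thus $J(A_2 v)\lesssim \sqrt v$, and Lemma~3.4.3 of \cite{vanweak} yields
\begin{equation*}
\sqrt n\,E\Bigl[\sup_{g\in\mathcal G_{Kv}}\bigl|(\MM_n-\MM)(\bar f - f_0)\bigr|\Bigr]\lesssim \sqrt v\bigl(1+\sqrt v/(v^2\sqrt n)\bigr)=\phi_n(v),
\end{equation*}
with the implicit constants depending only on the parameters in the statement.

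For the small-support part, I apply the second assertion of Lemma~\ref{Bernsteinentropy} (this is where assumption (A4) is needed) with $K=C\log n$ and $\tilde C=4MK^2\lesssim(\log n)^2$. Using $\sqrt{a+b+c}\le \sqrt a+\sqrt b+\sqrt c$, split $J(\eta)\le \eta+2\sqrt{A_1 v\eta}+\int_0^\eta\sqrt{d\log(A_1/\eps^2)}\,d\eps$; the last integral is of order $\eta\sqrt{\log(1/\eta)}$ for small $\eta$ by the substitution $u=-\log\eps$. At $\eta=A_2 v$ this gives $J(A_2v)\lesssim v\sqrt{\log(1/v)}$, and Lemma~3.4.3 followed by multiplication by $\tilde C$ produces an upper bound of order $(\log n)^2\bigl(v\sqrt{\log(1/v)}+\log(1/v)/\sqrt n\bigr)$ for $\sqrt n\,E[\sup|(\MM_n-\MM)(f-\bar f)|]$.

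The main obstacle is to verify that this second contribution is absorbed into $\phi_n(v)$ on the range $v\in(0,(\log n)^2 n^{-1/3}]$. There, $\log(1/v)\lesssim\log n$, and since $v$ decays faster than any negative power of $\log n$, one checks directly that $(\log n)^2 v\sqrt{\log(1/v)}\lesssim v(\log n)^{5/2}\lesssim \sqrt v$ and $(\log n)^2\log(1/v)/\sqrt n\lesssim (\log n)^3/\sqrt n\lesssim 1/(v\sqrt n)$; both terms are therefore $\lesssim \phi_n(v)$. Combining the two estimates completes the proof. The delicate point is precisely this logarithmic bookkeeping, which forces one to restrict $v$ to the small range above; this is also the reason that Theorem~\ref{theo: ratelog} must first be invoked to guarantee that the LSE lives in such a range with high probability before Proposition~\ref{lem:centproc2} can usefully be applied.
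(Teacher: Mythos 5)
Your proposal follows essentially the same route as the paper: the same decomposition $f-f_0=(\bar f-f_0)+(f-\bar f)$, the same reduction of the truncated piece to the class $\widetilde{\mathcal F}_{(2K_0)v}$ with the $n$-independent normalization $\tilde C=16MK_0^2$, and the same two ingredients (Lemma \ref{Bernsteinentropy} and Lemma 3.4.3 of \cite{vanweak}). The only place you diverge is in handling the $d\log(A_1/\eps^2)$ term of the entropy bound for $\overline{\mathcal F}_{Kv}$: the paper enlarges the class to $\widetilde{\bar{\mathcal F}}_{Kv'}$ with $v'=(\log n)^2n^{-1/6}$ so that the logarithmic term is absorbed into $A_1(1+d)v'/\eps$ and $J_1(A_2v)\lesssim (v'v)^{1/2}$, whereas you integrate the logarithmic term directly to get $J(A_2v)\lesssim v\sqrt{\log(1/v)}$ and do the bookkeeping at the end; both give the same conclusion, and your version is arguably the more transparent of the two. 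One small imprecision: on the stated range $v\in(0,(\log n)^2n^{-1/3}]$ there is no lower bound on $v$, so the intermediate claim $\log(1/v)\lesssim\log n$ can fail for very small $v$. The final inequalities you need are nevertheless true on the whole range, since $(\log n)^2v\sqrt{\log(1/v)}=(\log n)^2\sqrt{v}\,\sqrt{v\log(1/v)}$ and $v\mapsto v\log(1/v)$ is increasing on $(0,e^{-1})$, hence bounded by its value at $v_{\max}=(\log n)^2n^{-1/3}$, which makes both prefactors $o(1)$; you should phrase the bookkeeping this way (or note that only $v\gtrsim n^{-1/3}$ matters in the peeling argument of Theorem 3.2.5) rather than asserting $\log(1/v)\lesssim\log n$ outright.
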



\begin{proof}
 Assume $n$ large enough so that $K\geq (2K_0)\vee2$ and   use (\ref{decomp}) to write that the expectation on the left hand side of the previous display is bounded above by
\begin{eqnarray*}
\sqrt n E   \bigg[\sup_{g \in \mathcal{G}_{Kv} } \Big \vert (\mathbb{M}_n - \mathbb{M})g-(\mathbb{M}_n - \mathbb{M}) \overline g  \Big \vert   \bigg]  
+ \sqrt n E   \bigg[\sup_{g \in \mathcal{G}_{Kv} } \Big \vert (\mathbb{M}_n - \mathbb{M}) \overline g-(\mathbb{M}_n - \mathbb{M})g_0  \Big \vert   \bigg].  \notag
\end{eqnarray*}
Hence, in the notation of \cite{vanweak}
\begin{eqnarray}\label{decompexp}
  \sqrt n E    \bigg[\sup_{g \in \mathcal{G}_{Kv}} \Big \vert (\mathbb{M}_n - \mathbb{M})g-(\mathbb{M}_n - \mathbb{M})g_0  \Big \vert   \bigg]  \le    E  [\Vert \mathbb{G}_n \Vert_{\mathcal{F}_1 }]  +     E  [\Vert \mathbb{G}_n \Vert_{\mathcal{F}_2 }] 
\end{eqnarray}   
where $\mathcal{F}_1  = \bar{\mc{F}}_{Kv}$ and $\mathcal{F}_2$ is the class of  functions $\bar{f}  - f_0$ such that $\bar f\in{\mc F}_{(2K_0)v}$. 
To give a bound for the first term on the right-hand side, consider $v' =  (\log n)^2 n^{-1/6} \gg\sqrt v$. It follows from Lemma \ref{Bernsteinentropy} that for all $\eps\in(0,A_2^{-1}v]$, we have
\begin{eqnarray}\label{entropybound}
H_B\Big(\eps, \widetilde{\bar{ \mc F}}_{Kv'}, \Vert \cdot \Vert_{B, \PP} \Big)&\leq&\frac{A_1 v'}{\eps} + d \log\left( \frac{A_1 }{\eps^2} \right) \ \ \le \ \  A_1 (1 + d)  \frac{v'}{\eps}
\end{eqnarray}
provided that $n$ is sufficiently large, where we used the fact that $\log(x)  \le \sqrt x$ for all $x > 0$ for the second inequality.
Since  the class $\widetilde{\mc F}_1: = \widetilde{\bar{\mc{F}}}_{Kv}$ is included in $\widetilde{\bar{\mc{F}}}_{Kv'}$, 
its $\eps$-bracketing entropy can be also bounded above by (\ref{entropybound})  for all $\eps \in (0, A_2^{-1} v]$. Using again the inequality $\sqrt{x  + y }  \le \sqrt{x} + \sqrt{y}  $ for all $x, y \ge 0$ we can write  
\begin{eqnarray*}
J_1\big( A_2 v \big)  &:=  & \int_0^{A_2 v}  \sqrt{1  + H_B\Big(\eps, \widetilde{\mc F}_1, \Vert \cdot \Vert_{B, \PP}  \Big)} d \eps  \\
& \le &  A_2 v +  2 \ \Big(A_1 (1+ d)   v'\Big)^{1/2}   (A_2 v)^{1/2} \le A_3  (v'v)^{1/2} 
\end{eqnarray*}
using that $v<v'$ and $K>1$, where $A_3 > 0$ is a constant depending on $a_0$, $M$ and $d$. Lemma \ref{Bernsteinentropy} implies that $\Vert \tilde{f} \Vert_{B, \PP} \le A_2 v $ for all $\tilde{f}  \in \widetilde{\mc F}_1$. Invoking Lemma 3.4.3 of \cite{vanweak} allows us to write that  
\begin{eqnarray*}
 E[\Vert \mathbb{G}_n \Vert_{\widetilde{\mathcal{F}}_1 }]  & \lesssim & J_1\big( A_2 v \big)  \left(1 +  \frac{J_1\big(A_2 v\big)}{A^2_2  v^2 \sqrt n}  \right)   \ \ \leq \ \   A_3   (v'v)^{1/2}  \left(  1 +  \frac{ (v')^{1/2}}{v^{3/2} \sqrt{n}}  \right) 
\end{eqnarray*}
at the cost of increasing $A_3$. Now, using the definition of $\widetilde{\mathcal{F}}_1 $, we have that 
\begin{eqnarray*}
 E[\Vert \mathbb{G}_n \Vert_{\mathcal{F}_1 }]  &= &  4MK^2  E[\Vert \mathbb{G}_n \Vert_{\widetilde{\mathcal{F}}_1 }]   \leq A_3  v^{1/2}  \left(1 +  \frac{1}{v^{3/2}\sqrt{n} }  \right)  
\end{eqnarray*}
at the cost of increasing $A_3$.
This gives a bound for the first term on the right hand side of \eqref{decompexp}.

To deal with the second term, we  apply Lemma \ref{Bernsteinentropy} to the class $\widetilde{\mc F}_2   = \widetilde{\mathcal{F}}_{(2K_0)v}$ with $K= 2K_0$. Here, $\tilde{C} = 4 M K^2_0$  is independent of $n$, and  
$J_2(A_2 v) \le A_3 v^{1/2}$ for some $A_3>0$ that does not depend on $n$, where $J_2$ is defined in the same manner as $J_1$ with $\widetilde{\mc F}_1$ replaced by $\widetilde{\mc F}_2$. By Lemma 3.4.3 of \cite{vanweak}, we have 
\begin{eqnarray*}
 E[\Vert \mathbb{G}_n \Vert_{\mathcal{F}_2} ] & = &  16 M K^2_0  E[\Vert \mathbb{G}_n \Vert_{\widetilde{\mc F}_2}]  \leq A_3     v^{1/2}  \left(  1 +  \frac{1}{v^{3/2} \sqrt n} \right)
 \end{eqnarray*}
at the cost of increasing $A_3$.
Combining the calculations developed for both classes together with (\ref{decompexp})  gives the claimed form of the entropy bound.  
\end{proof}

\begin{proof}[Proof of Theorem \ref{theo: rateL2}]  
Theorem \ref{theo: ratelog} implies that with a probability that can be made arbitrarily large, the LSE $\widehat g_n$ belongs to $\mathcal{G}_{Kv}$ with $K=C\log n$ and $v=(\log n)^2n^{-1/3}$ for some $C>0$ that does not depend on $n$. The result follows now from Theorem 3.2.5 of \cite{vanweak}  with $\alpha=1/2$ and $r_n\sim n^{1/3}$. 
\end{proof}

\section*{Acknowledgements} Part of this work was done while the third author was at the University Paris Nanterre as an invited visiting researcher.  She thanks Paris Nanterre for the support. The contribution of the second author has been conducted as part of the project Labex MME-DII (ANR11-LBX-0023-01).

\begin{center}
\huge{Supplement to``Least squares estimation in the monotone single index model"}
\end{center}
\section{Proofs and other results from Section \ref{sec: LSE}}\label{sec: proofsSection2}

\subsection{Proof of Theorem \ref{theo: caractLSE}}
We need some notation.  Fix $\alpha\in{\cal S}_{d-1}$ and let $m_\alpha$ be the (random) number of distinct values among $\alpha^T X_1, \ldots, \alpha^T X_n$. Let 
  $Z _1 < \ldots < Z _{m_{\alpha}}$ 
be the corresponding ordered statistics. 
Define

$$n _{k}=\sum_{i=1}^n\mathbb{I}_{\alpha^T X_{i}=Z_{k} }\quad \mbox{and}\quad  
 t _{k}=\frac1{n_{k} }\sum_{i=1}^nY_{i} \mathbb{I}_{\alpha^T X_{i}=Z_{k} },$$
for all $k=1,\dots,m_\alpha$.
Note that to alleviate the notation, we do not make it explicit that $Z_k,n_k,t_k$ depend on $\alpha$.
For all $\Psi\in\cal M$ we  then have
\begin{equation}\label{eq: hnPsi}
h_n(\Psi, \alpha)=\sum_{k=1}^{m_\alpha}n_k \left \{ t_k  - \Psi(Z_k ) \right \}^2
+\sum_{i=1}^nY_{i}^2-\sum_{k=1}^{m_\alpha}n_k  (t_k )^2.
\end{equation}
This means that minimizing $h_n(\Psi,\alpha)$ with respect to $\Psi$ amounts to minimizing
$$\Psi\mapsto \sum_{k=1}^{m_\alpha}n_k \left \{ t_k  - \Psi(Z_k ) \right \}^2$$
over $\cal M,$ which in turn amounts to minimizing
$\sum_{k=1}^{m_{\alpha}}n_{k} \left \{ t_{k}   - \eta_{k} \right \}^2$
over the set of all real numbers $\eta_{1}\leq\dots\leq \eta_{m_\alpha}$, where we set $\eta_{k}=\Psi(Z_{k} )$. It follows from Theorem 1.1 in \cite{barlowstatistical} that the minimum is achieved at a unique $(\eta_{1},\dots, \eta_{m_\alpha})$, which completes the proof of Theorem \ref{theo: caractLSE}.
\qed

 \subsection{Proof of Proposition \ref{prop: alphaexists}}\label{sec: exists}
For arbitrary $(i,j)$ with $X_i\neq X_j$, the total space $\RR^d$ is separated into three disjoint parts: 
(1)  the hyperplane $H_{ij}$ of all vectors $\alpha\in\RR^d$  such that $\alpha^T(X_{i}-X_{j})=0$,
(2) the open half-space $S_{ij}^+$ of all $\alpha\in \RR^d$ such that $\alpha^TX_{i}>\alpha^TX_{j}$,
and (3) the open half-space $S_{ij} ^-$ of all $\alpha\in \RR^d$ such that $\alpha^TX_{i}<\alpha^TX_{j}$. We call a subset $\mc R\subset\RR^d$ a {\it maximal region} if $\mc R\cap H_{ij}$ is empty for all $i\neq j$ such that $X_i\neq X_j$, but with $\overline {\mc R}$ the closure of $\mc R$, $\overline{\mc  R}\backslash \mc R$ is included in the (finite) union of all hyperplanes $H_{ij}$.   Hence, a maximal region is an intersection of half-spaces.
We will prove that for arbitrary $i\neq j$ such that $X_i\neq X_j$, and $\alpha \in H_{ij}\cap{\cal S}_{d-1}$, we can find 
$\alpha_k\in {\cal S}^X$ with
\begin{equation}\label{eq: MaxRegion}
\hat h_n(\alpha)\geq \hat h_n(\alpha_k).
\end{equation}

 We can find a maximal region $\mc R$ and a sequence $(\alpha_{k})_{k\in\NN}$ such that $\alpha_k\in \mc R\cap{\cal S}_{d-1}$ for all $k$ and
$\alpha_{k}\to\alpha$ as $k\to\infty$. With $x_1,\dots,x_m$ (where $m\in\NN$ is random) the distinct values of $X_1,\dots,X_n$,  there exists a (unique) permutation $\pi_{\mc R}$ such that
$$\alpha_{k}^Tx_{\pi_{\mc R}(1)}<\alpha_{k}^Tx_{\pi_{\mc R}(2)}<\dots<\alpha_{k}^Tx_{\pi_{\mc R}(m)}$$
for all $k$.
Denote by $l_{1}$ and $l_{2}$ the indices such that $X_i=x_{\pi_{\mc R}(l_{1})}$ and $X_j=x_{\pi_{\mc R}(l_{2})}.$
We have $\alpha^TX_i= \alpha^TX_j$ since $\alpha\in H_{ij}$ and therefore, letting $k\to\infty$ yields
\begin{equation}\label{eq: order}
\alpha^Tx_{\pi_{\mc R}(1)}\leq \dots\leq\alpha^Tx_{\pi_{\mc R}(l_{1})}=\dots=\alpha^Tx_{\pi_{\mc R}(l_{2})} \leq \dots\leq \alpha^Tx_{\pi_{\mc R}(m)}.
\end{equation}
Define
$$\tilde n_{k}=\sum_{i=1}^n\mathbb{I}_{X_{i}=x_{k}}\quad \mbox{and}\quad  
{\tilde  y_{k}}=\frac1{\tilde  n_{k}}\sum_{i=1}^nY_{i} \mathbb{I}_{X_{i}=x_{k}},$$
for all $k=1,\dots,m$.
 Rearranging the terms in \eqref{eq: hnPsi} yields
\begin{equation}\label{eq: h_nalpha} 
h_n(\Psi, \alpha_k)=\sum_{k=1}^{m}\tilde  n_{\pi_{\mc R}(k)}\left \{ {\tilde  y}_{\pi_{\mc R}(k)} - \Psi( \alpha_k^Tx_{\pi_{\mc R}(k)}) \right \}^2+\sum_{i=1}^nY_{i}^2-\sum_{k=1}^{m}\tilde n_k ({\tilde y_k})^2
\end{equation}
for all $\Psi\in\cal M$. Minimizing $h_{n}(\Psi,\alpha_{k})$ over $\Psi\in{\cal M},$ we get
\begin{eqnarray}\label{supalphak}
\hat h_n(\alpha_{k})
=\inf_{\eta_1\leq\dots\leq\eta_{m}} \sum_{k=1}^{m}\tilde n_{\pi_{\mc R}(k)}\left \{{\tilde y}_{\pi_{\mc R}(k)}- \eta_k \right \}^2
+\sum_{i=1}^nY_{i}^2-\sum_{k=1}^{m}\tilde n_k ({\tilde y}_k)^2
\end{eqnarray}
for all $k$ whereas because of \eqref{eq: order}, 
\begin{eqnarray*}
\hat h_n(\alpha)=\inf_{\eta_1\leq\dots\leq \eta_{l_{1}}=\dots=\eta_{l_{2}} \leq\dots \leq\eta_{m}}\sum_{k=1}^{m}\tilde n_{\pi_{\mc R}(k)}\left \{{\tilde y}_{\pi_{\mc R}(k)}- \eta_k \right \}^2
+\sum_{i=1}^nY_{i}^2-\sum_{k=1}^{m}\tilde n_k ({\tilde y}_k)^2.
\end{eqnarray*}
The above infimum is taken over a restricted set as compared to \eqref{supalphak}, so we conclude that \eqref{eq: MaxRegion} holds
for all $k$. Hence, the infimum of $\hat h_n$ over ${\cal S}_{d-1}$ is equal to the infimum of $\hat h_n$ over  the intersection of ${\cal S}_{d-1}$ with the (finite) union of all possible maximal regions in $\RR^d$. This is precisely ${\cal S}^X$, which completes the proof of the first claim.

Now, it follows from \eqref{supalphak} that for all $\alpha_k\in  {\cal S}^X$, $\hat h_n(\alpha_{k})$  depends only on the ordering $\pi_{\cal R}$ induced by $\alpha_k$. As the number of inducible orderings is finite, minimizing $\alpha\mapsto \hat h_n(\alpha)$ over ${\cal S}^X$ amounts at minimizing a function over a finite set, so that the minimum is achieved. This completes the proof of
 Proposition \ref{prop: alphaexists}. \qed

\subsection{Proof of Theorem \ref{theo: caractLSE2}} Similar to \eqref{supalphak}, for all $\pi\in{\cal P}^X$ and $\alpha\in{\cal S}^X$ connected with \eqref{eq: inducepi}, we have
$$\hat h_n(\alpha)
=\inf_{\eta_1\leq\dots\leq\eta_{m}} \sum_{k=1}^{m}\tilde n_{\pi(k)}\left \{{\tilde y}_{\pi(k)}- \eta_k \right \}^2
+\sum_{i=1}^nY_{i}^2-\sum_{k=1}^{m}\tilde n_k ({\tilde y}_k)^2.$$
It follows from Theorem 1.1 in \cite{barlowstatistical} that the minimum is achieved at the unique $(\eta_1,\dots,\eta_m)=(d_1^{\pi},\dots,d_m^{\pi})$ whence
\begin{eqnarray*}
\hat h_n(\alpha)
&=&\tilde h_n(\pi)
+\sum_{i=1}^nY_{i}^2-\sum_{k=1}^{m}\tilde n_k ({\tilde y}_k)^2.
\end{eqnarray*}
If $(\hat\Psi_n,\hat\alpha_n)$ satisfies the conditions of the theorem, we conclude that for all $\alpha\in{\cal S}^X$ and $\pi\in{\cal P}^X$ that satisfies \eqref{eq: inducepi},
\begin{eqnarray*}
h_n(\hat\Psi_n,\hat\alpha_n)&=& \tilde h_n(\hat\pi_n)
+\sum_{i=1}^nY_{i}^2-\sum_{k=1}^{m}\tilde n_k ({\tilde y}_k)^2\\
&\leq& \tilde h_n(\pi)
+\sum_{i=1}^nY_{i}^2-\sum_{k=1}^{m}\tilde n_k ({\tilde y}_k)^2\\
&=&\hat h_n(\alpha)=\inf_{\Psi\in{\cal M}} h_n(\Psi,\alpha)
\end{eqnarray*}
which completes the proof.
\qed

\subsection{Algorithm to compute the LSE when $d=2$} \label{sec:exact2D}
Below, we give an algorithm to compute the LSE exactly for the special case when $d=2.$   Recall that the number of orderings grows like $m^{2(d-1)}.$  Thus, even if the following approach could be extended to $d>3,$ this would probably not the best approach for computational efficiency.  For $d=3$ and $m=100,$ there are over 24 million possible orderings.  


\bigskip
\noindent\textbf{Algorithm:}
\begin{enumerate}
\item Enumerate all pairs of the covariates $\{X_1, \ldots, X_n\}$ and calculate the unit orthogonal vector to the difference vector; remove all duplicates (resulting in $v_1 \ldots, v_K$, say).  Convert the unit vectors from Cartesian coordinates to polar coordinates, with $\beta_i$ denoting the angle from the positive horizontal axis to vector $v_i$.  This results in  $\beta_1, \ldots, \beta_K .$  Place these in order, and calculate the midpoint of each difference: $\alpha_1, \ldots, \alpha_K .$ 
\item For each $\alpha_i$, compute $\widehat h_i = \widehat h_n(\alpha_i)$ as in \eqref{line:alg2D}.   This can be done using, for example, the PAVA.  
\item Return $\widehat \alpha_n$  corresponding to the minimizer of $\widehat h_i, 1 \leq i \leq K.$   
\end{enumerate}

\section{Additional proofs}\label{sec: appendix}

\subsection{Proof of Theorem \ref{MainTheo}}  \label{sec:ProofMainTheo}
By definition of $\widehat{\alpha}_n$, we necessarily have that 
\begin{eqnarray}\label{eq: sigma}
\widehat{\Sigma}_n\widehat{\alpha}_n 
                             & = & \frac{1}{n}  \sum_{i=1}^n Y_i ( X_i-\bar{X}_n )  \to_p     Cov(X, Y) 
\end{eqnarray} 
as $n\to\infty$, where $\widehat{\Sigma}_n =  n^{-1}  \sum_{i=1}^n  (X_i  -  \bar{X}_n)   (X_i  -  \bar{X}_n)^T$. This implies that 
\begin{eqnarray*}
\widehat{\Sigma}_n\widehat{\alpha}_n  &\to_p  &   E[(X-\mu)  Y]\\
  & = &  E[(X-\mu)  \Psi_0(\alpha_0^T  X )   ]  \\
& = &  E \left [ \Psi_0(\alpha_0^T  X)  E[(X-\mu)  |  \alpha_0^T  X] \right ].
\end{eqnarray*} 
Now, using the known property of elliptically symmetric random variables, see e.g. \cite[page 319, comment following Condition 3.1]{MR1137117} we should have that $E[(X-\mu)  |  \alpha_0^T  X  ] $ is  linear in $ \alpha_0^T  X$ and therefore,
 $$E[(X-\mu)  |  \alpha_0^T  X  ]  = \alpha_0^T  (X-\mu)   b $$
 (using $E[X]=\mu$) where $b$ is a vector that has to satisfy
\begin{eqnarray*}
E[(X-\mu)   \alpha_0^T  (X-\mu)]  =  var(\alpha_0^T  X)  b 
\end{eqnarray*}
or equivalently (using $var(X) = \Sigma$), 
$b =  \Sigma\alpha_0(\alpha^T_0  \Sigma  \alpha_0)^{-1}$.
We conclude that 
\begin{eqnarray*}
\widehat{\Sigma}_n\widehat{\alpha}_n  &\to_p  & 
 \lambda^*\Sigma \alpha_0
\end{eqnarray*}
where $\lambda^*  =  cov\big(\Psi_0(\alpha^T_0  X), \alpha^T_0 X \big)/\alpha^T_0  \Sigma \alpha_0$.
By the law of large numbers, $\hat\Sigma_n$ converges in probability to $\Sigma$, so we obtain if $\Sigma$ is invertible that $\hat\Sigma_n$ is invertible with probabiity that tends to one, with inverse $\hat\Sigma_n^{-1}$ that converges in probability to $\Sigma^{-1}$ and therefore,
\begin{eqnarray}\label{eq: cvLE}
\widehat{\alpha}_n  \to_p   
 \lambda^*\alpha_0.
\end{eqnarray}
Now,  let $Z$ be an independent copy of $X$. Since $\Psi_0$ is  strictly increasing on an interval we have
\begin{eqnarray*}
0 < E\Big[\big(\Psi_0(\alpha^T_0  Z)  - \Psi_0(\alpha^T_0  X) \big) \big(\alpha^T_0  Z  -  \alpha^T_0  X\big)  \Big]   = 2 \ cov\big(\Psi_0(\alpha^T_0  X), \alpha^T_0 X\big),
\end{eqnarray*} whence $\lambda^*>0$. Combining \eqref{eq: cvLE} with the continuous mapping theorem then yields
$$\tilde\alpha_n=\frac{\widehat{\alpha}_n}{\|\widehat{\alpha}_n\|}  \to_p   
\frac{ \lambda^*\alpha_0}{\lambda^*\|\alpha_0\|}=\alpha_0
$$
since by assumption, $\|\alpha_0\|=1$.  This completes the proof of the first assertion.

To prove the second assertion, we write $C  =  cov(X, Y)$. Combining \eqref{eq: sigma} and \eqref{eq: cvLE} yields that with $\alpha^* =\lambda^* \alpha_0 $, we have
 $\alpha^* = \Sigma^{-1}  C
$ 
and therefore,
\begin{eqnarray*}
\sqrt{n} (\widehat{\alpha}_n  - \alpha^*)  &=  &  \sqrt n \left[ \frac{\widehat{\Sigma}^{-1}_n}{n} \sum_{i=1}^n Y_i (X_i -  \bar{X}_n)  - \alpha^*   \right]  \\
 & = & \widehat{\Sigma}^{-1}_n \sqrt n \left[\  \frac{1}{n} \sum_{i=1}^n Y_i(X_i -  \bar{X}_n)  -   C   -  \big(\widehat{\Sigma}_n \alpha^*  -  \Sigma \alpha^* \big)  \right]  \\
  & = & \widehat{\Sigma}^{-1}_n \sqrt n \left[  \frac{1}{n} \sum_{i=1}^n (X_i -  \mu) ( Y_i  - E(Y))   -   C   - \big(  \widehat{\Sigma}_n \alpha^*   -  \Sigma \alpha^*   \big)   \right] \\
&&  - \widehat{\Sigma}^{-1}_n \sqrt n   (\bar{X}_n - \mu)  (\bar{Y}_n - E(Y))  \\
& = &\frac{\widehat{\Sigma}^{-1}_n}{ \sqrt n }\sum_{i=1}^n \left[\ (X_i -  \mu) ( Y_i  - E(Y))   -    (X_i - \mu)  (X_i  -\mu)^T \alpha^*    - \left( C   -  \Sigma \alpha^*  \right)  \right]  \\
&&  - \widehat{\Sigma}^{-1}_n \sqrt n   (\bar{X}_n - \mu)  (\bar{Y}_n - E(Y))  + \widehat{\Sigma}^{-1}_n \sqrt n(\bar{X}_n - \mu)(\bar{X}_n - \mu)^T \alpha^*  \\
& = & \frac{\widehat{\Sigma}^{-1}_n}{ \sqrt n }\sum_{i=1}^n \left[\ (X_i -  \mu) ( Y_i  - E(Y)-(X_i  -\mu)^T \alpha^*)    - \left( C   -  \Sigma \alpha^*  \right)  \right]  \\
&& + o_p(1).
\end{eqnarray*}
Hence, it follows from the central limit theorem that
\begin{eqnarray*}
\sqrt{n} (\widehat{\alpha}_n  - \alpha^*)  \to_d \mathcal{N}\bigg(0, \Sigma^{-1}  \Gamma \Sigma^{-1}\bigg)
\end{eqnarray*}
where $\Gamma$ is the dispersion matrix of $(X -  \mu) ( Y - E(Y)-(X -\mu)^T\alpha^*).$
Now  consider the  functions $h(x)  =  x/\Vert x \Vert$ and $h_i(x)  = x_i /\Vert x \Vert$ for $x\in\RR^d$. Let $H$ be the gradient matrix of $h$. Then,  $H_{ij} = \partial h_i/\partial x_j  =  (\Vert x \Vert^2  - x^2_i)/\Vert x \Vert^3$ if $j=i$  and $ -x_i x_j/ \Vert x \Vert^3  $ otherwise.  Since $\|\alpha^*\|=\lambda^*$, we obtain that $H^*$, the gradient matrix evaluated at $\alpha^*$  is given by   $$H^* = \frac{1}{\vert \lambda^* \vert }  \big(I_d - \alpha_0 \alpha^T_0\big).$$  Using  the $\delta$-method we conclude from the two preceding displays that $\sqrt n(\tilde\alpha_n-\alpha_0)$ converges in distribution to a centered Gaussian distribution with dispersion matrix 
\begin{eqnarray*}
V = \frac{1}{(\lambda^*)^2}  \big(I_d - \alpha_0 \alpha^T_0\big)   \Sigma^{-1}  \Gamma \Sigma^{-1}\big  (I_d - \alpha_0 \alpha^T_0  \big),
\end{eqnarray*}
which completes the proof.
 \hfill $\Box$

\subsection{Proof of \eqref{eq: subG}}\label{sec: proofsubG}
For all $R>2\|E(X)\|$ we have
\begin{eqnarray*}
P(\|X\|>R)&\leq&P(\|X-E(X)\|>R/2)\\
&\leq&\sum_{i=1}^dP(|X_i-E(X_i)|>R/(2\sqrt d))
\end{eqnarray*}
where we denote here by $X_i$ the $i$-th entry of $X$. Since $X$ has a sub-Gaussian distribution this implies that
\begin{eqnarray*}
P(\|X\|>R)&\leq&2\sum_{i=1}^d\exp\left(-\frac{R^2}{8d\sigma^2}\right)\\
&\leq &2d\exp\left(-\frac{R^2}{8d\sigma^2}\right).
\end{eqnarray*}
Since $g_0$ is bounded, we can assume without loss of generality that its supremum norm is bounded above by $\log n$. Hence, combining the previous display with Lemma  \ref{lem: LSElog} yields  that for all $\eps>0$, there exists $A>0$ that depends only on $\eps$, $a_0$ and $M$ such that with probability larger than $1-\eps$,
\begin{eqnarray*}
\int_{\overline{\mathcal{X}}_R} \left(\widehat{g}_n(x)-g_0(x)\right)^2 d\QQ(x)
&\leq & A(\log n)^2\QQ(\overline{\mathcal{X}}_R)\\
&\leq& 2dA(\log n)^2\exp\left(-\frac{R^2}{8d\sigma^2}\right).
\end{eqnarray*}
where $\overline {\mathcal{X}}_R=\{x,\ \|x\|>R\}$. If we consider $R$ such that
$$R= 4\sigma \sqrt{d(\log d+\log n)}$$
we then obtain that
\begin{eqnarray*}
\int_{\overline{\mathcal X}_R} \left(\widehat{g}_n(x)-g_0(x)\right)^2 d\QQ(x)
&\leq& 2dA(\log n)^2\exp\left(-2(\log d+\log n)\right)\\
&\leq& 2d^{-1}A(\log n)^2n^{-2}\\
&\leq& 2A(\log n)^2n^{-2}
\end{eqnarray*}
with probability larger than $1-\eps$. In particular,
\begin{eqnarray*}
\int_{\overline{\mathcal X}_R} \left(\widehat{g}_n(x)-g_0(x)\right)^2 d\QQ(x)
=O_p(n^{-2}).
\end{eqnarray*}
Now, with the above choice of $R$, denote by $\QQ_R$ the distribution given by $\QQ_R(E)=\QQ(E\cap{\mc X}_R)/\QQ({\mc X}_R)$ for all events $E$, where ${\mathcal{X}}_R=\{x,\ \|x\|\leq R\}$. Theorem \ref{theo: ratelog} yields that for all $\eps>0$, there exists $A>0$ that depends only on $\eps$, $a_0$ and $M$ such that
\begin{eqnarray*}
\int_{{\mathcal X}_R} \left(\widehat{g}_n(x)-g_0(x)\right)^2 d\QQ_R(x)&\leq& dA\QQ({\mc X}_R) (1+\sqrt{\overline q R})n^{-1/3}(\log n)^{5/3}\\
&\leq& dA (1+\sqrt{\overline q R})n^{-1/3}(\log n)^{5/3}
\end{eqnarray*}
with probablity larger than $1-\eps$. It follows that for all $\eps>0$, there exists $A>0$ that depends only on $a_0$ and $M$ such that \eqref{eq: subG} holds with probablity larger than $1-\eps$.  \hfill $\Box$

\subsection{Proof of Proposition \ref{prop: identifiability}}   \label{proof: identifiability}

  Consider vectors $\alpha$ and $\beta$ in $\mathcal S_{d-1}$, and  non-constant functions $f\in{\cal M}$ and $h\in{\cal M}$ that satisfy 
$f(\alpha^T X) = h(\beta^T X) \ a.s.$
 and are both left-continuous (the right-continuous case can be treated likewise) with no discontinuity point at the boundary of their domain. To prove Proposition \ref{prop: identifiability}, it suffices to show that in such a case, we necessarily have $\alpha=\beta$ and $f=h$ on ${\cal C}_\alpha={\cal C}_\beta$. We prove below that we indeed have $\alpha=\beta$ and $f=h$.

By assumption we have
  $f(\alpha^Tx)=h(\beta^Tx)$ for almost all $x\in\cal X$
  in the Lebesgue sense. Using left-continuity of both $f$ and $h$, we conclude that the above equality holds for all $x$ in the interior of $\cal X$.  If we could prove that $\alpha=\beta$, this would imply that 
  $f=h$ on the interior of ${\cal C}_{\alpha}={\cal C}_{\beta}$. By continuity of both $f$ and $h$  at the boundaries of their domain, this would imply that $f=h$ on  ${\cal C}_{\alpha}={\cal C}_{\beta}$.

  Hence, it suffices to show that $\alpha=\beta$.  To show that $\alpha=\beta$,  we first notice that because of the convexity of $\cal X$, for small enough $L>0$ we can find an open ball ${\cal B}$ with radius $L$ included in $\cal X$ on which  $x\mapsto f(\alpha^Tx)$ is not constant. We then have
\begin{equation}
\label{identifiabilitybis}
f(\alpha^Tx)=h(\beta^Tx)\mbox{ for all }x\in\cal B.
\end{equation}
Without loss of generality (possibly replacing $f(z)$ by $f(z-\alpha^Tx_{0})$ and $h(z)$ by $h(z-\beta^Tx_{0})$ with $x_{0}$ being the center of the ball), we assume that $\cal B$ is the open ball with center $x_{0}=0$ and radius $L$.

Assume $\beta\not\in\{\alpha,-\alpha\}$ (which implicitly assumes that $d\geq 2$).   We will show that this yields a contradiction.  The vectors $\beta$ and $\alpha$ are linearly independent so it follows from the Cauchy-Swcharz inequality, where the equality case is excluded,  that $\alpha^T\beta< 1$.  With $v = \beta - \alpha$, we then have
$v^T\alpha=\beta^T\alpha-1< 0.$ Hence, for all $a\in[0,L)$ we have 
$$f(a)= f(\alpha^T (a\alpha))=h(\beta^T (a\alpha))=h(\alpha^T (a\alpha)+v^T(a\alpha))\leq h(a),$$
using \eqref{identifiabilitybis} combined to the monotonicity of $h$. Likewise, $v^T\beta>0$ and therefore,
$$h(a)= h(\beta^T (a\beta))=f(\alpha^T (a\beta))=f(\beta^T (a\beta)-v^T(a\beta))\leq f(a)$$
for all $a\in[0,L)$, whence  $h(a)= f(a)$ for all $a\in[0,L)$. Similarly, $f(-a)=h(-a)$ for all $a\in[0,L)$, whence $f=h$ on $(-L,L)$. Combining this with \eqref{identifiabilitybis} we arrive at
\begin{equation}\label{eq: falphabeta}
f(\alpha^T x)=f(\beta^Tx)\mbox{ for all }x\in\cal B.
\end{equation}
Since $x\mapsto f(\alpha^Tx)$ is not constant on ${\cal B}$,  there exists  a point $b\in(-L,L)$ of strict increase of $f$. The ball $\cal B$ can be chosen in such a way that $b\neq 0$. Either we have $f(b+ \eps) > f(b)$  for all 
$\varepsilon\in(0,L-b)$,  or   we have $f(b-\varepsilon)<f(b)$ for all 
$\varepsilon\in(0,L+b)$. We assume that  $f(b+ \eps) > f(b)$ for all 
$\varepsilon\in(0,L-b)$. The other case can be handled likewise. In the case where  $b> 0$, with $x=(b+\varepsilon)\beta$, we have $x\in\cal B$ and $\alpha^T x\leq b$ for sufficiently small $\varepsilon$, using that $\alpha^T\beta<1$.   Hence, $f(\alpha^Tx)\leq f(b)<f(b+\varepsilon)= f(\beta^Tx)$ by monotonicity of $f$, which  yields a contradiction with  \eqref{eq: falphabeta}. In the case $b<0$, consider $\varepsilon$ sufficiently small so that $b+\varepsilon<0$. Then, with $x=b\alpha$ we  have $x\in\cal B$ and $\beta^Tx\geq b+\varepsilon$  for sufficiently small  $\varepsilon$. Hence,  $f(\alpha^Tx)= f(b)<f(b+\varepsilon)\leq f(\beta^Tx)$ which again, yields a contradiction. This means that $\beta\in\{\alpha,-\alpha\}$.

Assume $\beta=-\alpha$. We will show that this yields again a contradiction.   For all $a\in[0,L)$ we have 
$$f(a)=f(\alpha^T(a\alpha))=h(\beta^T(a\alpha))=h(-a),$$
using \eqref{identifiabilitybis} with $\beta=-\alpha$. This means that $f(a)= h(-a)$. Likewise,  $h(a)= f(-a)$. By monotonicity of $h$ we then have
$$f(a)= h(-a)\leq h(a)= f(-a).$$
As $f$ is non-decreasing, this means that $f(a)=f(-a)$ for all $a\in[0,L)$. Hence, $f$ is constant on $(-L,L)$, which yields a contradiction. This means that $\beta\neq-\alpha$.  We have proved that $\beta\in\{\alpha,-\alpha\}$, hence  $ \alpha=\beta$. This completes the proof of Proposition \ref{prop: identifiability}.
\hfill $\Box$

\subsection{Proof of Theorem \ref{theo: consistency}} \label{sec: proofCons}

Since convergence in probability is equivalent to the property that each subsequence has a further subsequence along which the convergence holds with probability one,  Theorem \ref{theo: ratelog} allows us to assume in what follows without loss of generality (possibly arguing along subsequences) that
\begin{eqnarray}\label{eq: cvL2}
\lim_{n\to\infty}\int\left(\hat \Psi_{n}(\hat\alpha_{n}^Tx)-\Psi_{0}(\alpha_0^Tx)\right)^2d\mathbb{Q}(x)=0
\end{eqnarray}
with probability one. We will show that \eqref{eq: cvL2}  implies {that
$\hat\alpha_{n}$ converges to $\alpha_{0}$}.

 In order to use compactness arguments, we consider a truncated version of $\hat \Psi_{n}$, where we recall that $\hat\Psi_n$ denotes the LSE extended monotonically to the whole real line: 
we consider $\bar \Psi_{n}$ such that
\begin{eqnarray*}
\bar \Psi_{n}(t)=\begin{cases}\hat \Psi_{n}(t)&\mbox{if }\hat \Psi_{n}(t)\in (K_{-},K_{+})\\
K_{+}&\mbox{if }\hat \Psi_{n}(t)\geq K_{+}\\
K_{-}&\mbox{if }\hat \Psi_{n}(t)\leq K_{-}
\end{cases}.
\end{eqnarray*}
where   $K_+$ denotes the largest value of $\Psi_0$ whereas  $K_-$ denotes the smallest value of $\Psi_0$.

We argue along paths, which means that $\omega$ is considered as fixed here and such that \eqref{eq: cvL2} holds.
For all $n$, the function $\bar \Psi_{n}$ is monotone, left-continuous (because $\hat \Psi_{n}$ itself is left-continuous) and bounded by $\max\{K_{+},-K_{-}\}$ in supremum norm. Hence, similar to Lemma 2.5 in \cite{aad}, each subsequence of $(\bar \Psi_{n})_{n\geq 0}$ possesses a further subsequence along which $\bar \Psi_{n}$ converges pointwise to a left-continuous monotone function $m_0$ say, at each of its points of continuity. The function $m_{0}$ possesses right-limits at every point. Because $\hat\alpha_{n}$ belongs to the compact set $\mathcal S_{d-1}$ for all $n$, we can extract a further subsequence along which $\hat\alpha_{n}$ converges in the $\RR^d$-Euclidean distance to some vector $a_{0}\in \mathcal S_{d-1}.$ For simplicity, we denote the general term of the subsequence by $(\bar \Psi_{n}, \hat\alpha_{n})$.   
We aim to show that $m_{0}=\Psi_{0}$  and that $a_{0}=\alpha_{0}$. For this task, consider the $L_{2}$-distance
\begin{eqnarray}\label{eq: dist m0 f0}
\int\left(m_{0}(a_{0}^Tx)-\Psi_{0}(\alpha_{0}^Tx)\right)^2d\mathbb{Q}(x)\leq 3I_{n,1}+3I_{n,2}+3I_{n,3}
\end{eqnarray}
where 
\begin{eqnarray*}
I_{n,1}=\int\left(m_{0}(a_{0}^Tx)-m_{0}(\hat\alpha_{n}^Tx)\right)^2d\QQ(x),&&
I_{n,2}=\int\left(\bar \Psi_{n}(\hat\alpha_{n}^Tx)-\Psi_{0}(\alpha_{0}^Tx)\right)^2d\QQ(x)\\
I_{n,3}&=&\int\left(m_{0}(\hat\alpha_{n}^Tx)-\bar \Psi_{n}(\hat\alpha_{n}^Tx)\right)^2d\QQ(x).
\end{eqnarray*}
We will show that $I_{n,j}$ tends to zero as $n\to\infty$ for $j=1,2,3$ to conclude that the $L_{2}$-distance on the left-hand side of \eqref{eq: dist m0 f0} equals zero. To deal with $I_{n,1}$, we use the fact that because $m_{0}$ is monotone, the set of its  discontinuity points is countable and hence has $\mathbb{Q}$-measure zero. This means that $I_{n,1}$ can be viewed as an integral over the set of continuity points of $m_{0}$. At the continuity points of  $m_{0}$ we have $m_{0}(\hat\alpha_{n}^Tx)\to m_{0}(a_{0}^Tx)$ as $n\to\infty$
so it follows from the dominated convergence theorem that 
$I_{n,1}$ converges to zero as $n\to\infty$. Next, it follows from the definition of $\bar \Psi_{n}$ that
$$I_{n,2}\leq \int\left(\hat \Psi_{n}(\hat\alpha_{n}^Tx)-\Psi_{0}(\alpha_{0}^Tx)\right)^2d\mathbb{Q}(x).$$
Hence, with \eqref{eq: cvL2} we conclude that $I_{n,2}\to 0$.
Finally, with  {$Q_{n}$} the distribution of $\hat\alpha_{n}^TX$, where $X$ is independent of the data points $(X_1,Y_1),\dots,(X_n,Y_n)$, we have
$$I_{n,3}=\int\left(m_{0}(t)-\bar \Psi_{n}(t)\right)^2{dQ_{n}(t).}
$$
{Because} $m_{0}$ is monotone, the set of its discontinuity points is countable and hence has Lebesgue measure zero{, and hence $Q_n$-measure zero}. Because of the convergence of $\bar \Psi_{n}$ at each continuity point of $m_{0}$ we conclude from the dominated convergence theorem that 
$I_{n,3}$ converges to zero as $n\to\infty$. This means that the three terms on the right-hand side of \eqref{eq: dist m0 f0} tends to zero as $n\to\infty$ and therefore,
$$\int\left(m_{0}(a_{0}^Tx)-\Psi_{0}(\alpha_{0}^Tx)\right)^2d\mathbb{Q}(x)=0.$$
Possibly modifying $m_{0}$ so that its restriction to ${\cal C}_{\alpha_0}$ has no discontinuity point at the boundaries of ${\cal C}_{\alpha_{0}}$, which does not modify the value of the above integral, we conclude from Proposition \ref{prop: identifiability} that $a_{0}=\alpha_{0}$ and $m_{0}=\Psi_{0}$ with possible exception at the boundaries of ${\cal C}_{\alpha_0}$. From the calculations above, it follows that 
$\hat\alpha_{n}$ converges to $\alpha_{0}$ and $\bar \Psi_{n}$ converges pointwise to $\Psi_{0}$ {at each continuity point of $\Psi_0$ on the interior of ${\cal C}_{\alpha_0}$, with probability one. The first claim of the theorem follows.}

{Next, let} $I$ be such that $K_-<\Psi_{0}(t)<K_+$ for all $t\in I$. For almost all $\omega$, we then have $\bar \Psi_{n}=\hat \Psi_{n}$ on $I$ for all large enough $n$ 
and therefore,
$$\lim_{n\to\infty}\sup_{t\in I}\vert\hat \Psi_{n}(t)-\Psi_{0}(t)\vert=0$$ 
{with probability one.}
Uniformity of the convergence follows from continuity of $\Psi_{0}$ together with monotonicity of the functions involved. Theorem \ref{theo: consistency}  follows. \hfill{$\Box$}

\subsection{Proof of Corollary~\ref{cor:rate_alpha}} \label{proof: rate_alpha}

\begin{proof}
Borrowing, from \cite[Lemma 5.7, page 418]{murphy1999current} we have that for any random variable $X$, if $(E[g_1(X) g_2(X)])^2 \leq c E[g_1^2(X)]E[g_2^2(X)]$ for $c\leq 1,$ then 
\begin{eqnarray}\label{line:murphy}
E[(g_1(X)+g_2(X))^2] &\geq& (1-\sqrt{c})(E[g_1(X)^2]+E[g_2(X)^2]).
\end{eqnarray}
Note that in the expectations, {with $\PP_X$ denoting the distribution of $X$,} $
E[g_1(X)]={\int g_1(x) d\PP_X(x)}$ and so on - that is, the expectations should be viewed as short-hand notation for the integral in the $x$-variable.   

Let $\cal A$ denote a subset of $\cal X$ to be chosen later such that $\QQ(\mc A)>0$. In order to adapt the above result to our problem, we write 
\begin{eqnarray*}
\int_{{\cal A}}( \hat g_n(x) - g_0( x))^2d\mathbb Q(x)
&=& \int_{\cal A} \left(g_1(x)+g_2(x)\right)^2 d\QQ(x) \\
& = & E[(g_1(X_{\cal A})+g_2(X_{\cal A}))^2]\times\QQ({\cal A})
\end{eqnarray*}
where  $g_1(x)= \hat \Psi_n(\hat\alpha_n^T x) - \Psi_0(\widehat\alpha_n^T x)= \widetilde g_1(\widehat \alpha_n^T x)$, $g_2(x)=\Psi_0(\widehat\alpha_n^T x) - g_0( x)$ and $X_{\cal A}$ denotes here a random variable with density function  $x\mapsto q(x)\mathbb I_{\cal A}(x)/\QQ({\cal A})$. To alleviate the notation, in what follows we simply write $X$ instead of $X_{\cal A}.$ 
We then have
\begin{eqnarray*}
E[g_1(X)g_2(X)]^2 &=& E[\widetilde g_1(\widehat \alpha_n^T X)g_2(X)]^2\\
&=& E[\widetilde g_1(\widehat \alpha_n^T X) E[g_2(X)|\widehat \alpha_n^T X]]^2\\
&\leq& E[\widetilde g_1^2(\widehat \alpha_n^T X)]E\left[E[g_2(X)|\widehat \alpha_n^T X]^2\right], 
\end{eqnarray*}
by the Cauchy-Schwarz inequality. Hence,
\begin{eqnarray*}
E[g_1(X)g_2(X)]^2
& \le & c_n E[g_1^2(X)]E[g_2^2(X)].
\end{eqnarray*}
where 
\begin{eqnarray*}
c_n &=& \frac{E\left[\left(\Psi_0(\widehat \alpha_n^T X) -E[\Psi_0(\alpha_0^T X)|\widehat \alpha_n^T X]\right)^2\right]}{E\left[\left(\Psi_0(\widehat \alpha_n^T X) -\Psi_0(\alpha_0^T X)\right)^2\right]}.
\end{eqnarray*}
Using \eqref{line:murphy} with $c=c_n$, we conclude that 
\begin{eqnarray}\label{eq: conval}\notag
\int_{{\cal A}}(\hat g_n( x) - g_0( x))^2d\mathbb Q(x)&\geq & (1-\sqrt{c_n}) \left\{E\left[(\widehat \Psi_n (\widehat \alpha_n^T X) - \Psi_0(\widehat \alpha_n^T X))^2\right]\right.\\
&&\hspace{.3cm}\left.+E\left[(\Psi_0 (\widehat \alpha_n^T X) - \Psi_0(\alpha_0^T X))^2\right]\right\}\times \QQ({\cal A}).
\end{eqnarray} 
Note that $c_n$ depends on  $\widehat \alpha_n$ and is therefore random in the data.  We will prove below that there exists some real number $c\in(0,1)$ such that from any subsequence, we can extract a further subsequence along which
$
\limsup_{n\to\infty} c_n \leq c <1
$
with probability one. This means that 
\begin{eqnarray}\label{eq: cnOp}
(1-\sqrt{c_n})^{-1}=O_p(1),
\end{eqnarray}
since convergence in probability is equivalent to the property that each subsequence has a further subsequence along which the convergence holds with probability one.

Consider an arbitrary subsequence. Define $r_n =  \|\hat\alpha_n-\alpha_0\|$ and $\gamma_n =  (\widehat \alpha_n-\alpha_0)/r_n.$ Because $\gamma_n$ belongs to the compact set ${\cal S}_{d-1}$, we can extract a subsequence along which $\gamma_n$ converges in probability to a limit, $\gamma$ say. By Theorem~\ref{theo: consistency},  we can extract a further subsequence (that we still index by $n$ to alleviate the notation) along which $\hat\alpha_n,\gamma_n$ converge to $\alpha_0,\gamma$ with probability one.  This means that  we can extract a subsequence along which both $\hat\alpha_n(\omega)\to\alpha_0$ and $\gamma_n(\omega)\to\gamma(\omega)$ for almost all paths $\omega$. We next argue with such a path $\omega$ fixed. Hence, $\hat\alpha_n$ and $\gamma_n$ can be considered as non random and we search for bounds that do not depend on the chosen path $\omega$. We denote by $x_0$ a point in $\cal X$ such that $z_0=\alpha_0^Tx_0$ where $z_0$ is taken from Assumption (A5) and we consider $\eps>0$ such that $\Psi_0$ is continuously differentiable over ${\cal V}:=[z_0-2\eps,z_0+2\eps]$ with a derivative that is bounded both from above and away from zero on $\cal V$. Note that the derivative is uniformly continuous on the compact set $\cal V$. Furthermore, we denote by $\cal A$ the Euclidean ball with center $x_0$ and radius $\eps$. Note that for large enough $n$, we then have $\alpha_0^Tx\in\cal V$ and $\hat\alpha_n^Tx\in\cal V$ for all $x\in\cal A$, by the Cauchy-Schwarz inequality.

We have that 
\begin{eqnarray}\label{eq: taylor}
\Psi_0(\alpha^T_0 x)  = \Psi_0(\widehat{\alpha}^T_n x)  + \Psi'_0(\widehat{\alpha}^T_n x) (\alpha_0-\widehat{\alpha}_n)^T x  +  {o(r_n)}
\end{eqnarray}
uniformly for all $x\in\cal A$ since $|(\widehat{\alpha}_n - \alpha_0)^T x|\leq r_n \|x\|$ where  $\|x\|\leq \|x_0\|+\eps$. 
Hence,
\begin{eqnarray*}
&&\hspace*{-2cm} E\left[\left(\Psi_0(\widehat \alpha_n^T X) -E[\Psi_0(\alpha_0^T X)|\widehat \alpha_n^T X]\right)^2\right]\\
&=& E\left[\left(E\left.\left[\left\{\Psi'_0(\widehat{\alpha}_n^T X) (\widehat \alpha_n-\alpha_0)^T X + {o(r_n)} \right\} \right|\widehat \alpha_n^T X\right]\right)^2\right]\\
&=& E\left[\left(E\left.\left[\Psi'_0(\widehat{\alpha}_n^T X) (\widehat \alpha_n-\alpha_0)^T X\right|\widehat \alpha_n^T X\right]\right)^2\right] + {o(r_n^2)}+E_{n,1}
\end{eqnarray*}
where we recall that $X$ denotes here $X_{\cal A}$, a random variable supported on $\cal A$, and
\begin{eqnarray*}
\vert E_{n,1}\vert   &=& 2o(r_n) \vert E\left[\Psi_0'(\widehat{\alpha}_n^T X)(\widehat \alpha_n-\alpha_0)^T X)\right]  \vert =o(r_n^2),
\end{eqnarray*}
again by (A5).  Similarly, we have $$E\left[\left(\Psi_0(\widehat \alpha_n^T X) -\Psi_0(\alpha_0^T X)\right)^2\right]=E\left[\left(\Psi_0'(\widehat \alpha_n^T X) (\widehat \alpha_n-\alpha_0)^TX\right)^2\right] + o(r_n^2).$$   
Combining these calculations, we arrive at
\begin{eqnarray*}
c_n 
&=&  \frac{E\left[\left(\Psi_0'(\widehat \alpha_n^T X) \gamma_n^TE\left[X|\widehat \alpha_n^T X\right]\right)^2\right]
+o(1)}
{E\left[\left(\Psi_0'(\widehat \alpha_n^T X) \gamma_n^TX\right)^2\right] 
+o(1)}.
\end{eqnarray*}
Lemma~\ref{ConvCondExp} {below} shows that $E[X|\widehat \alpha_n^TX] {\longrightarrow} E[X|\alpha_0^TX]$  almost surely.  This, 
along with  continuity of $\Psi'_0$ and the Lebesgue dominated convergence theorem (since $|X| \leq \|x_0\|+\eps$ almost surely), implies that $c_n$ converges with
\begin{eqnarray*}
 \lim_{n \to \infty}  c_n
&= & \frac{E\Big[(\Psi'_0(\alpha^T_0 X) \gamma^T E[X|\alpha^T_0  X])^2 \Big]}{E\Big[\big(\Psi'_0(\alpha^T_0 X) \gamma^T X \big)^2\Big]} \\
&=& \frac{\gamma^T E\left[(\Psi_0'(\alpha_0^T X))^2 E[X|\alpha_0^T X]E[X|\alpha_0^T X]^T\right] \gamma}{\gamma^T E\left[(\Psi_0'(\alpha_0^T X))^2 XX^T\right] \gamma}.
\end{eqnarray*} 

Now, $\|\widehat \alpha_n\|^2 = \|\alpha_0+r_n \gamma_n\|^2$ and therefore, $1 = ||\alpha_0||^2 + r_n^2 + 2r_n\langle \alpha_0, \gamma_n\rangle,$ which implies that 
$
\langle \alpha_0, \gamma_n\rangle = -r_n/2 \to 0
$
since $\|\alpha_0\|^2=1$.  Since $\gamma=\lim_{n\to\infty}\gamma_n$, this implies  that $\langle \alpha_0, \gamma\rangle=0$ and therefore, $\lim_{n \to \infty} c_{n} \leq  c$ where 
\begin{eqnarray*}
 c=\sup_{\|\gamma\|=1, \langle\gamma, \alpha_0\rangle=0}  \frac{\gamma^T E\left[(\Psi_0'(\alpha_0^T X))^2 E[X|\alpha_0^T X]E[X|\alpha_0^T X]^T\right] \gamma}{\gamma^T E\left[(\Psi_0'(\alpha_0^T X))^2 XX^T\right] \gamma}
\end{eqnarray*}
 does not depend on the chosen path $\omega$. It remains to prove that $c<1$. Now, note that 
\begin{eqnarray*}
E\left[(\Psi_0'(\alpha_0^T X))^2 XX^T\right] &=& E\left[(\Psi_0'(\alpha_0^T X))^2 E[X|\alpha_0^T X]E[X|\alpha_0^T X]^T\right]\\
&& \hspace{0.5cm}+E\left[(\Psi_0'(\alpha_0^T X))^2 (X-E[X|\alpha_0^T X])(X-E[X|\alpha_0^T X])^T\right].
\end{eqnarray*}
Since  $\Psi_0'(\alpha_0^TX)$ is bounded below, our goal is now  to  show that  the null-space of the matrix $E\left[( X-E[X|\alpha_0^T X])( X-E[X|\alpha_0^T X])^T\right]$ is spanned by~$\alpha_0$ only, as this will imply that $c <1.$

To this end, consider any $\gamma_0$ perpendicular to $\alpha_0$ with $\|\gamma_0\|=1.$ 
Let $A_0$ denote the matrix with first row $\alpha_0^T$ and second row $\gamma_0^T$ and  let $Z=A_0 X.$ Since $X$ has an everywhere-positive density, so does $Z.$ To see this, take $Z'=A'_0X$ where $A_0'$ is a $d\times d$ invertible matrix such that $A_0'$ has its first two rows equal to $A_0$.  (Such a matrix exists since $\gamma_0$ and $\alpha_0$ are perpendicular and hence linearly independent.) Then $Z'$ has a density with respect to the Lebesgue measure which can be explicitly calculated using the Jacobian formula and its marginal gives the density of $Z$, $f_Z$ say.  
Now, 
\begin{eqnarray*}
\gamma_0^T E\left[(X-E[X|\alpha_0^T X])(X-E[X|\alpha_0^T X])^T\right] \gamma_0 &=& E\left[(\gamma_0^T X-E[\gamma_0^TX|\alpha_0^T X])^2\right].
\end{eqnarray*}
This equals zero iff $\gamma_0^T X = E[\gamma_0^TX|\alpha_0^T X]$   almost surely, or, $Z_2=E[Z_2|Z_1]$ almost surely.  However, this means that the distribution of $Z$ is concentrated on a one-dimensional subspace of $Z,$ which cannot hold since $Z$ has an {everywhere-positive} Lebesgue density. 
This finally shows that $\lim_{n\to\infty} c_n\leq c <1$.

Then, if follows from \eqref{eq: conval} 
that we can find $c'>0$ and $c''>0$ independent on $n$ and $\omega$ such that for large $n$,
\begin{eqnarray*}
\int_{{\cal A}}(\hat g_n( x) - g_0( x))^2d\mathbb Q(x)&\geq & c' \int_{{\cal A}}(\Psi_0 (\widehat \alpha_n^T x) - \Psi_0(\alpha_0^T x))^2d\mathbb Q(x)\\
&\ge& c'' \int_{{\cal A}}((\widehat \alpha_n-\alpha_0)^T x)^2d\mathbb Q(x)\\
&\ge& c'' \|\hat\alpha_n-\alpha_0\|^2\inf_{\beta\in{\cal S}_{d-1}}\int_{\cal A}(\beta^T x)^2d\mathbb Q(x),
\end{eqnarray*}
by definition of $\cal A$. The infimum above does not depend on the chosen path $\omega$ and is achieved at a point $\beta_0$, say, by continuity of  the function
$\beta\mapsto \int_{\cal A}(\beta^T x)^2d\mathbb Q(x)$
on the compact set ${\cal S}_{d-1}$. Hence, 
\begin{eqnarray*}
\int_{{\cal A}}(\hat g_n( x) - g_0( x))^2d\mathbb Q(x)&\ge& c'' \|\hat\alpha_n-\alpha_0\|^2\int_{\cal A}(\beta_0^T x)^2d\mathbb Q(x).
\end{eqnarray*}
The integral on the right-hand side is strictly positive since $\QQ$ has a density function that is everywhere-positive positive on $\cal A$. This means that there exists $K>0$ such that from each subsequence, we can extract a further subsequence along which
\begin{eqnarray*}
\|\hat\alpha_n-\alpha_0\|^2&\leq& K \int_{{\cal A}}(\hat g_n( x) - g_0( x))^2d\mathbb Q(x)\\
&\leq & K \int_{{\cal X}}(\hat g_n( x) - g_0( x))^2d\mathbb Q(x)
\end{eqnarray*}
for large $n$, with probability one. Since the right-hand side is of order $O_p(n^{-2/3})$ by Theorem \ref{theo: rateL2}, this implies that $\|\hat\alpha_n-\alpha_0\|= O_p(n^{-1/3}).$

In the following, we assume that $\Psi_0$ has a derivative bounded from above on ${\cal C}_{\alpha_0}$ and we extend $\Psi_0$ to the whole real line in such a way that the extension has a bounded derivative on $\RR$.
{With $\underline q$ a lower bound for the density of $\hat\alpha_n^TX$ (note that by assumption, one can consider $\underline q$ that is non random and does not depend on $n$) we have
\begin{eqnarray}\notag
\int_{\mathcal{X}} (\widehat \Psi_n (\widehat \alpha_n^T x) - \Psi_0(\widehat \alpha_n^T x))^2d\mathbb Q(x) &\ge   &  \underline{q} 
\int_{\mathcal{C}_{\widehat{\alpha}_n}} (\widehat \Psi_n (t) - \Psi_0(t))^2 dt\\
&\geq & \underline q   {\int_{\underline c+v_n}^{\overline c-v_n}}(\widehat \Psi_n (t) - \Psi_0(t))^2 dt   \label{line:result2}
\end{eqnarray}}
with probability that tends to one, using the definition of $v_n$ and $\hat\alpha_n-\alpha_0=O_p(n^{-1/3})$. 
On the other hand, 
\begin{eqnarray*}
\int_{\mathcal{X}} (\widehat \Psi_n (\widehat \alpha_n^T x) - \Psi_0(\widehat \alpha_n^T x))^2d\mathbb Q(x) 
&\leq & 2 \int_{\mathcal{X}} (\widehat \Psi_n (\widehat \alpha_n^T x) - \Psi_0(\alpha_0^T x))^2d\mathbb Q(x) \\
&&\quad +2\int_{\mathcal{X}} ( \Psi_0 ( \alpha_0^T x) - \Psi_0(\widehat \alpha_n^T x))^2d\mathbb Q(x)  
\end{eqnarray*}
where by Theorem \ref{theo: rateL2}, the first integral on the right hand side is of order $O_p(n^{-2/3})$. For the second integral, denoting by $K$ an upper bound for the sup-norm of $\Psi_0'$ on $\RR$ we have
\begin{eqnarray*}
\int_{\mathcal{X}} ( \Psi_0 ( \alpha_0^T x) - \Psi_0(\widehat \alpha_n^T x))^2d\mathbb Q(x) &\leq& 
K \int_{\mathcal{X}} ((  \alpha_0-\widehat \alpha_n)^T x)^2d\mathbb Q(x)   \\
&\le& K\|\alpha_0-\alpha_n\|^2 R^2 =O_p(n^{-2/3})
\end{eqnarray*}
where $R$ is taken from \eqref{eq: R}. Combining, we conclude that \eqref{eq: cvpsi} holds.
This proves the second result.
\end{proof}

\subsection{Proofs for Subsection \ref{sec: cvSep}}

\begin{lemma}\label{ConvCondExp}
Let $X$ be a random variable having a density function $q$ with respect to Lebesgue measure on a bounded subset of $\RR^d$, and let $\alpha_n$ be a non-random sequence in ${\cal S}_{d-1}$ that converges to some $\alpha_0$ as $n\to\infty$. If $q$ is continuous and bounded on its domain, we then have
\begin{eqnarray*}
E[X| \alpha^T_n  X] &{\longrightarrow}&  E[X | \alpha^T_0 X]
\end{eqnarray*}
with probability one, as $n\to\infty$.
\end{lemma}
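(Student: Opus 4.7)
The plan is to express $E[X \mid \alpha^T X]$ as a deterministic function $\phi_\alpha(\alpha^T X)$, establish joint continuity of $(\alpha, t) \mapsto \phi_\alpha(t)$ at points where the density of $\alpha^T X$ is positive, and then combine this with the trivial pointwise convergence $\alpha_n^T X(\omega) \to \alpha_0^T X(\omega)$. The argument splits coordinate by coordinate; I sketch it for a single coordinate $X_i$.

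In a neighborhood of $\alpha_0$ I would choose a rotation $R_\alpha$ depending continuously on $\alpha$ with $R_\alpha \alpha = e_1$, obtained for instance by applying Gram--Schmidt to $\alpha$ together with a fixed basis of $\alpha_0^\perp$. For any bounded measurable $f : \RR^d \to \RR$, the change of variables $y = R_\alpha x$ combined with disintegration yields
\begin{equation*}
E[f(X) \mid \alpha^T X = t] \;=\; \frac{h_f(\alpha, t)}{h_0(\alpha, t)}, \qquad h_f(\alpha, t) := \int_{\RR^{d-1}} f(R_\alpha^T(t, \tilde y)) \, q(R_\alpha^T(t, \tilde y)) \, d\tilde y,
\end{equation*}
where $h_0(\alpha, \cdot)$ (obtained by taking $f \equiv 1$) is the density of $\alpha^T X$ with respect to Lebesgue measure. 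Taking $f(x) = x_i$ gives $E[X_i \mid \alpha^T X = t] = \phi_{\alpha, i}(t) := h_{\pi_i}(\alpha, t)/h_0(\alpha, t)$ with $\pi_i(x) = x_i$, whenever $h_0(\alpha, t) > 0$.

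Since $q$ is bounded with bounded support and continuous on its domain, its extension by zero to $\RR^d$ is continuous outside the Lebesgue-null set $\partial \mathcal X$. The integrands defining $h_f$ are uniformly dominated by $\sup q \cdot \|f\|_\infty$ times the indicator of a fixed bounded set (independent of $\alpha$ in a neighborhood of $\alpha_0$), and for $(\alpha_n, t_n) \to (\alpha_0, t_0)$ they converge pointwise in $\tilde y$ outside a Lebesgue-null subset of $\RR^{d-1}$. The dominated convergence theorem therefore gives joint continuity of $(\alpha, t) \mapsto h_f(\alpha, t)$, and hence of $\phi_{\alpha, i}(t)$ at every $(\alpha_0, t_0)$ with $h_0(\alpha_0, t_0) > 0$. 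To conclude, note that $\alpha_0^T X$ has density $h_0(\alpha_0, \cdot)$, so $h_0(\alpha_0, \alpha_0^T X) > 0$ almost surely, and for such $\omega$ the deterministic convergence $\alpha_n^T X(\omega) \to \alpha_0^T X(\omega)$ combined with joint continuity yields
\begin{equation*}
E[X_i \mid \alpha_n^T X](\omega) \;=\; \phi_{\alpha_n, i}(\alpha_n^T X(\omega)) \;\longrightarrow\; \phi_{\alpha_0, i}(\alpha_0^T X(\omega)) \;=\; E[X_i \mid \alpha_0^T X](\omega).
\end{equation*}

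The main technical hurdle is the pointwise-convergence step inside the DCT argument: one must show that for $Q$-almost every $t_0$ the slicing hyperplane $\{\alpha_0^T x = t_0\}$ meets $\partial \mathcal X$ in a set that is Lebesgue-null within the slice. I would resolve this via a Fubini-type argument, noting that $\partial \mathcal X$ has Lebesgue measure zero in $\RR^d$ and is hence sliced into null sets by almost every hyperplane transverse to $\alpha_0$. Since the exceptional set of $t_0$ is $Q_{\alpha_0}$-null and $\alpha_0^T X$ has law $Q_{\alpha_0}$, this exceptional set is avoided almost surely, which is precisely what is needed to complete the argument above.
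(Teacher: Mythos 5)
Your proof is correct, and it follows the same basic route as the paper: write $E[X\mid\alpha^TX=t]$ explicitly as a ratio of integrals of $q$ over a linear parameterization of the slice $\{\alpha^Tx=t\}$, and pass to the limit by dominated convergence using boundedness of the support and continuity of $q$. The differences are in execution. The paper uses the coordinate change $Z_1=\alpha_n^TX$, $Z_j=X_j$ for $j\ge2$ (after assuming w.l.o.g.\ $\alpha_{0,1}\neq0$), obtains the conditional densities $h_{nj}(x_j\mid t)$, argues their pointwise convergence to $h_{0j}(x_j\mid t)$, and recovers the first coordinate separately through the identity $E[X_1\mid\alpha_n^TX]=\alpha_{n,1}^{-1}(\alpha_n^TX-\sum_{j\ge2}\alpha_{n,j}E[X_j\mid\alpha_n^TX])$; your rotation chart $R_\alpha$ treats all coordinates at once and avoids both the nondegeneracy reduction and the separate algebraic step. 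More substantively, you make explicit two points that the paper's proof leaves implicit: (i) the evaluation point $\alpha_n^TX(\omega)$ moves with $n$, so one really needs joint continuity of $(\alpha,t)\mapsto\phi_{\alpha,i}(t)$ at the relevant points, not just convergence of the conditional densities at fixed $t$; and (ii) the zero-extension of $q$ can be discontinuous on $\partial\mathcal X$, so the pointwise convergence inside the dominated convergence argument only holds after discarding, via your Fubini/null-slice argument, a $Q_{\alpha_0}$-null set of values of $t$ (note that this step tacitly uses that $\partial\mathcal X$ is Lebesgue-null, which holds in the paper's application where the relevant support is a ball, and under (A1) where $\mathcal X$ is convex, but is not literally implied by "bounded subset"). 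So your argument is, if anything, a more careful version of the paper's, at the modest price of constructing the continuously varying rotation $R_\alpha$ near $\alpha_0$.
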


\begin{proof}
  In the following, $\alpha_{0,j}$ and ${\alpha}_{n,j}$ denotes the $j$-th component of $\alpha_0$ and  ${\alpha}_n$ respectively. Without loss of generality, we can assume that $\alpha_{0,1} \ne 0$, and hence ${\alpha}_{n,1} \ne 0$ provided that $n$ is large enough.  Consider the transformation  $Z_1  = {\alpha}^T_n  X$ and  $Z_j =  X_j, \ j \in  \{2, \ldots, d \}$. Let $Z=(Z_1, \ldots, Z_d)$. 
Simple calculations show that  
the density of $Z$ is given by
\begin{eqnarray*}
f_Z(z)  = q\left( \frac{z_1 -{\alpha}_{n,2} z_2 - \ldots  - {\alpha}_{n,d} z_d}{{\alpha}_{n,1}}, z_2, \ldots, z_d \right)  \ \frac{1}{{\alpha}_{n,1}}.
\end{eqnarray*}
This yields the density of the conditional distribution of $X_j $ given that $  {\alpha}^T_n  X  =t$, ${h}_{nj}(x_j |t)$, for $j \in \{2, \ldots, d \}$:
\begin{eqnarray*}
{h}_{nj}(x_j |t) = \frac{\int q\left( \frac{t -{\alpha}_{n,2} \ z_2 - \ldots  - {\alpha}_{n,d} \ z_d}{{\alpha}_{n,1}}, z_2, {\ldots,z_{j-1},x_j,z_{j+1}}\ldots, z_d \right)dz_2 \ldots dz_{j-1}dz_{j+1}\ldots dz_d }{\int q\left( \frac{t -{\alpha}_{n,2} \ z_2 - \ldots  - {\alpha}_{n,d} \ z_d}{{\alpha}_{n,1}}, z_2, \ldots, z_d \right)dz_2 \ldots  dz_d}.
\end{eqnarray*}
where the domains of integrations in the numerator and denominator are  \linebreak $\{(x_2,\dots,x_{j-1}, x_{j+1},\dots,x_d): x\in\mc X\}$ and $\{(x_2,\dots,x_d): x\in\mc X\}$ respectively.
$$
E[X_j | {\alpha}^T_n  X ]  = \int x_j {h}_{nj}(x_j |{\alpha}^T_n  X)  dx_j. 
$$
Using convergence of ${\alpha}_n$ to $\alpha_0$ {and the assumptions on $q$}, 
it follows that 
$
{h}_{nj}(x_j |t)$ converges to  
$h_{0j}(x_j | t)$
for all $t$, where $h_{0j}(\cdot | t)$ is defined in the same manner as $h_{nj}(\cdot|t)$ with $\alpha_n$ replaced by $\alpha_0$. Hence, $h_{0j}(\cdot|t)$ is the conditional density of $X_j $ given that $ \alpha_0^T X =t$.  Using again the Lebesgue dominated convergence theorem, since $X$ is supported on a bounded subset of $\RR^d$,  we then have that 
 $$
E[X_j | {\alpha}^T_n  X ]  \to E[X_j | \alpha^T_0  X ]
$$
almost surely for all $j \in \{2, \ldots, d \}$. For $j=1$, note that 
\begin{eqnarray*}
E[X_1 | {\alpha}^T_n  X ]  &= & \frac{1}{{\alpha}_{n,1}}\left( {\alpha}^T_n  X  -  \sum_{j=2}^d {\alpha}_{n,j} E[X_j | {\alpha}^T_n  X ]\right) \\
& \to & \frac{1}{\alpha_{0,1}} \left(\alpha^T_0  X  -  \sum_{j=2}^d \alpha_{0,j} E[X_j | \alpha^T_0  X ]\right)  = E[X_1 | \alpha^T_0  X ]
\end{eqnarray*}
which completes the proof.  \end{proof}

\subsection{Proof of Theorem \ref{theo: alternativeconsistency}}\label{sec: alternativeconsistency}
By analogy with the proof of Theorem \ref{theo: ratelog}, with $D$ taken from \eqref{eq: defD} we first prove that 
\begin{equation}\label{eq: Dtildelog}
D(\tilde g_n,g_0)=O_p(n^{-1/3}(\log n)^{5/3}).
\end{equation}

For notational convenience, we write  $n$ instead of $n_2$. Moreover, by analogy with the notation in Section \ref{sec: entropy}, we denote by 
\begin{list}{--}
        {
        \setlength{\topsep}{6pt}
        \setlength{\parskip}{0pt}
        \setlength{\partopsep}{0pt}
        \setlength{\parsep}{0pt}
        \setlength{\itemsep}{3pt}
        \setlength{\leftmargin}{20pt}}
\item
${\tilde{\mathcal{G}}_K}$  the class of functions $g(x)  = \Psi(\tilde\alpha_{n}^T x),    \ x \in \mathcal{X}$ where $\Psi \in {\mc M_{K}}$,
\item ${\tilde{\mathcal{G}}_{Kv}}$  the class of functions $g\in  {\tilde{\mathcal{G}}_K}$  satisfying  the condition \eqref{eq: defv}.
\end{list}
We again denote $R = \sup_{x \in \mathcal{X}}  \Vert x \Vert $, so that \eqref{eq: R} holds 
for all $\alpha\in{\cal S}_{d-1}$.

Let $K=C\log n$ for some $C>0$ that does not depend on $n$. Since ${\tilde{\mathcal{G}}_{Kv}}$ is a subset of ${{\mathcal{G}}_{Kv}}$ for arbitrary {$v>0$}, it follows from 
Proposition \ref{prop:centproc} that for all $v \in (0, 2 K]$, and by analogy to  \eqref{eq: Mn}, with $\MM_n$ defined by
$$\mathbb M_ng:=\frac 1{n_2}\sum_{i\in I_2}  \Big \{Y_ig(X_i)-\frac{g(X_i)^2}{2}  \Big\}$$
{there exists $A>0$  that depends only on $R$, $a_0$, $M$ and $\overline q$ such that}
\begin{eqnarray}\label{eq: prop611}
\sqrt nE\bigg[\sup_{g \in{\tilde{\mathcal{G}}_{Kv}}} \Big \vert ({\mathbb{M}_n - \mathbb{M})(g)-(\mathbb{M}_n - \mathbb{M})(g_0) } \Big \vert   \bigg] {\leq A}  \phi_n(v) 
\end{eqnarray}
where 
$
\phi_n(v)   =    v^{1/2}  (\log n)^{5/2} (1 +  (\log n)^{1/2}v^{-3/2}n^{-1/2} ).
$
Since  $D(g,g_0)\leq \|g\|_\infty+\|g_0\|_\infty\le 2K$ for sufficiently large $n$ and all $g\in\tilde{\mc G}_{K}$, we have 
$$\tilde{\mc G}_{{Kv}}=\tilde{\mc G}_{{K}}=\tilde{\mc G}_{{K(2K)}} $$
for all $v>2K$, so that
$$\sqrt nE\bigg[\sup_{g \in \tilde{\mathcal{G}}_{{Kv}}} \Big \vert ({\mathbb{M}_n - \mathbb{M})(g)-(\mathbb{M}_n - \mathbb{M})(g_0) }  \Big \vert   \bigg] {\leq A} \phi_n(2K) {\leq 2A\sqrt C} (\log n)^3$$
 for all $v>2K$ {and sufficiently large $n$}. Hence,  the above inequality \eqref{eq: prop611} holds for all $v>0$.
Furthermore, $\tilde g_n$ maximizes $\MM_n g$  over the set of all functions $g$ of the form $g(x)=\Psi(\tilde\alpha_n^Tx), x \in \mathcal{X}$ with $\Psi$ a non-decreasing function {which} implies, as in Lemma \ref{lem: LSElog}, that
$$
\min_{{k\in I_2}}Y_k \leq  \tilde g_n(x)\leq \max_{{k\in I_2}}Y_k.
$$
Therefore,  with arbitrarily large probability and appropriate choice of $C$, $\tilde g_n$ maximizes $\MM_n g$ over the restricted set $\tilde{\mc G}_{{K}}$. We will prove below that
\begin{equation}\label{eq: tildegTOg}
\MM_n(\tilde g_n)\geq \MM_n(g_0)-O_p(n^{-2/3}).
\end{equation}
Hence, we can use Lemma \ref{lem: basic} above and Theorem 3.2.5 in \cite{vanweak}, with $\alpha=1/2$ and  $r_n\sim n^{1/3}(\log n)^{-5/3}$, to conclude that \eqref{eq: Dtildelog} holds.

We now prove \eqref{eq: tildegTOg}. With  $\tilde g_0(x)=\Psi_0(\tilde\alpha_n^Tx)$ for all $x\in\RR^d$, it follows from the definition of $\tilde g_n$ that $\MM_n(\tilde g_n)\geq \MM_n(\tilde g_0)$. Moreover,
\begin{eqnarray*}
\MM_n g_0-\MM_n \tilde g_0&=&\frac{1}{{2n_2}}\sum_{{i \in I_2}}\left(\Psi_0(\tilde\alpha_n^TX_i)-\Psi_0(\alpha_0^TX_i)\right)^2\\
&&-\frac1{{n_2}}\sum_{{i \in I_2}}(Y_i-\Psi_0(\alpha_0^TX_i))
\left(\Psi_0(\tilde\alpha_n^TX_i)-\Psi_0(\alpha_0^TX_i)\right).
\end{eqnarray*}
Hence, it follows from  the assumption that $\Psi_0$ is $L$-Lipschitz that 
\begin{eqnarray*}
\MM_n g_0-\MM_n \tilde g_0&\leq &\frac{L^2}{{2n_2}}\sum_{{i \in I_2}}\left((\tilde\alpha_n-\alpha_0)^TX_i\right)^2\\
&&-\frac1{{n_2}}\sum_{{i \in I_2}}{(Y_i}-\Psi_0(\alpha_0^TX_i))
\left(\Psi_0(\tilde\alpha_n^TX_i)-\Psi_0(\alpha_0^TX_i)\right).
\end{eqnarray*}
It follows from the Cauchy-Schwarz inequality that the first term on the right hand side is bounded from above by
$$\frac{L^2R^2}{2}\|\tilde\alpha_n-\alpha_0\|^2,
$$
which is of the order $O_p(n^{-2/3})$ by assumption. Conditionally on the first sub-sample, that was used to construct $\tilde\alpha_n$, the second term on the right hand side is a mean of i.i.d. centered variables whose common variance is bounded from above by
$$L^2R^2E\left[(Y-\Psi_0(\alpha_0^TX))^2\right]\|\tilde\alpha_n-\alpha_0\|^2.
$$
Under our assumptions, $E\left[(Y-\Psi_0(\alpha_0^TX))^2\right]$ is finite so we conclude that conditionnaly on the first sub-sample, the second term is of order $O(n^{-1/2} \Vert \tilde{\alpha}_n - \alpha_0 \Vert ) = O(n^{-2/3})$ with arbitrarily large probability, whence 
$\MM_ng_0-\MM_n \tilde g_0\le O_p(n^{-2/3})$. Combining this with the fact that $\MM_n(\tilde g_n)\geq \MM_n(\tilde g_0)$ completes the proof of \eqref{eq: tildegTOg} and hence, the proof of \eqref{eq: Dtildelog}.


It remains to get rid of the log factor. Consider again $K=C\log n$ for some $C>0$ that does not depend on $n$. Since $\tilde{\mathcal{G}}_{{Kv}}$ is a subset of ${\mathcal{G}}_{{Kv}}$ for arbitrary {$v>0$}, it follows from 
Proposition \ref{lem:centproc2} that for all $v \in (0, (\log n)^2n^{-1/3}]$ and for $n$ large enough 
we have
{
\begin{eqnarray}\label{eq: prop61}
\sqrt nE\bigg[\sup_{g \in \tilde{\mathcal{G}}_{Kv}} \Big \vert (\mathbb{M}_n - \mathbb{M})(g)-(\mathbb{M}_n - \mathbb{M})(g_0)  \Big \vert   \bigg] \leq A  \phi_n(v) 
\end{eqnarray}
where $A$ depends only on $R,a_0,M,\overline q,\underline q$ and $K_0$ and}
$
\phi_n(v)   =    v^{1/2} (1 +  v^{-3/2}n^{-1/2} ).
$
It follows from \eqref{eq: Dtildelog} that with a probability that can be made arbitrarily large, the  estimator $\tilde g_n$ belongs to ${\tilde{\mathcal{G}}_{Kv}}$ with $K=C\log n$ and $v=n^{-1/3}(\log n)^2$ for some $C>0$ that does not depend on $n$. The result \eqref{eq: alternativecvg} follows now by applying again Theorem 3.2.5 of \cite{vanweak}  with $\alpha=1/2$ and $r_n\sim n^{1/3}$ combined to \eqref{eq: tildegTOg}.  

The proof of \eqref{eq: cvpsi} only uses that $\hat\alpha_n=\alpha_0+O_p(n^{-1/3})$ so the same  arguments lead to \eqref{eq: alternativecvpsi} since $\tilde\alpha_n=\alpha_0+O_p(n^{-1/3})$ by assumption. \qed

\subsection{Some properties of exponential families}\label{sec: EF}

\begin{prop}\label{prop: EF1}
Let  $Y$ be an integrable random variable having a density with respect to a dominating measure $\lambda$ on $\RR$ of the form
\begin{eqnarray}
y\mapsto h(y, \phi) \exp\left\{\frac{y \ell(\mu) - B( \ell(\mu))}{\phi}\right\} 
\end{eqnarray}
where $\mu$ is the mean, $\phi$ is a dispersion parameter,  $ \ell$ is a real valued function with a strictly positive first derivative on a non void open interval $(a,b)\subset\RR$, and $h$ is a normalizing function. We then have
\begin{enumerate}
\item
 $B$ is infinitely differentiable with $B'= \ell^{-1}$ on $( \ell(a), \ell(b))$, and $ \ell$ is infinitely differentiable. 
\item
If $\ell(\mu)$ belongs to a compact interval wich is stricly included in the domain of $B$, then we can find positive numbers $a_0$ and $M$ that depend only on that compact interval such that $E|Y|^s\leq a_0 s! M^{s-2}$ for all integers $s\geq 1$.
\end{enumerate}

\end{prop}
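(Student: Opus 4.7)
The plan is to pass to the natural parametrization $\theta = \ell(\mu)$, in which the density reads $h(y,\phi)\exp\{(y\theta-B(\theta))/\phi\}$ and the normalization identity becomes
\begin{equation*}
e^{B(\theta)/\phi} \;=\; \int h(y,\phi)\,e^{y\theta/\phi}\,d\lambda(y),
\end{equation*}
valid for every $\theta\in(\ell(a),\ell(b))$. The right-hand side is a Laplace-type transform of the nonnegative kernel $h(\cdot,\phi)$ and is therefore $C^\infty$ (in fact real-analytic) on the interior of its domain of convergence, by the classical interchange-of-differentiation-and-integration argument. Taking logarithms gives that $B$ is $C^\infty$ on $(\ell(a),\ell(b))$, and a first differentiation under the integral sign yields $B'(\theta) = E[Y] = \mu = \ell^{-1}(\theta)$. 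Hence $\ell^{-1}=B'$ is $C^\infty$, and since $\ell'(\mu)>0$ on $(a,b)$ implies $(\ell^{-1})'(\theta) = 1/\ell'(\mu)>0$, the inverse function theorem applied to $\ell^{-1}$ gives that $\ell$ itself is $C^\infty$ on $(a,b)$.

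For part 2, I will compute the moment-generating function of $Y$. Rewriting $E[e^{tY}]$ using the normalization identity with parameter $\theta+t\phi$ in place of $\theta$ gives
\begin{equation*}
E[e^{tY}] \;=\; \exp\big\{(B(\theta+t\phi)-B(\theta))/\phi\big\},
\end{equation*}
valid provided $\theta+t\phi$ remains in the domain of $B$. Suppose $\theta = \ell(\mu)$ lies in a compact interval $[\theta_1,\theta_2]$ strictly inside that domain, and choose $\delta>0$ such that $[\theta_1-\delta,\theta_2+\delta]$ is still inside. By continuity, $B$ is bounded on this compact set, so there is a constant $C$ with $E[e^{tY}]\le C$ uniformly for $\theta\in[\theta_1,\theta_2]$ and $|t|\le\delta/\phi$; the same bound applies to $E[e^{-tY}]$, whence $E[e^{t|Y|}]\le 2C$ on the same range. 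The elementary inequality $(t|Y|)^s/s!\le e^{t|Y|}$ evaluated at $t=\delta/\phi$ then yields $E|Y|^s\le 2C\,s!\,(\phi/\delta)^s$ for every integer $s\ge 1$, which is a bound of the required form $a_0\,s!\,M^{s-2}$ with $M=\phi/\delta$ and $a_0=2C(\phi/\delta)^2$; both constants depend only on the compact interval $[\theta_1,\theta_2]$ (and on the fixed $\phi$).

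The only delicate step is the smoothness of $B$ in part 1, which requires justifying differentiation under the integral sign for a Laplace-type transform; this is classical but somewhat tedious. Granted this, part 2 follows from a routine ``finite MGF on a neighborhood of zero implies factorial moment bounds'' argument, and I do not anticipate any further serious obstacle.
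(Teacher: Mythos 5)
Your proposal is correct and follows essentially the same route as the paper's proof: the natural parametrization, the normalization identity viewed as a Laplace transform to get smoothness of $B$ and $B'=\ell^{-1}$, and the shifted-parameter MGF identity combined with the elementary bound $(t|Y|)^s/s!\le e^{t|Y|}$ for the moment estimates. Your added remark that $\ell\in C^\infty$ follows from the inverse function theorem applied to $\ell^{-1}=B'$ is a small but welcome completion of a step the paper leaves implicit.
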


\begin{proof}
Setting $\eta= \ell(\mu)$, the density takes the form
 $$y\mapsto h(y, \phi) \exp\left\{\frac{y \eta - B(\eta)}{\phi}\right\}. 
$$
Since it integrates to one with respect to the dominating measure $\lambda$, we have
$$\exp\left\{\frac{B(\eta)}{\phi}\right\} =\int h(y, \phi) \exp\left\{\frac{y \eta}{\phi}\right\} d\lambda(y)$$
for all $\eta\in ( \ell(a), \ell(b))$. It follows from standard  properties of the Laplace transform that the right-hand side of the previous display is infinitely differentiable as a function of $\eta$ on $\RR$, so we conclude that $B$ is infinitely differentiable on $( \ell(a), \ell(b))$. Moreover, we can differentiate and interchange derivation and integration to obtain that on $( \ell(a), \ell(b))$,
\begin{eqnarray*}
\frac{\partial}{\partial\eta}\exp\left\{\frac{B(\eta)}{\phi}\right\} &=&\int h(y, \phi) \frac{\partial}{\partial\eta}\exp\left\{\frac{y \eta}{\phi}\right\} d\lambda(y)\\
&=&\int h(y, \phi) \frac{y }{\phi}\exp\left\{\frac{y \eta}{\phi}\right\} d\lambda(y)\\
&=&\frac{E(Y)}{\phi}\exp\left\{\frac{B(\eta)}{\phi}\right\} .
\end{eqnarray*}
Hence, $E(Y)=B'(\eta)$. Going back to the parameter $\mu=E(Y)$, we conclude that
$\mu=B'( \ell(\mu))$
for all  $\mu\in(a,b)$, whence $B'= \ell^{-1}$. This proves the first assertion.

To prove the second assertion, note that with again $\eta=\ell(\mu) $ we have
\begin{eqnarray*}
E\left[\exp\{tY\}\right]&=&\int h(y,\phi)\exp\left\{ty+\frac{y\eta-B(\eta)}{\phi}\right\}d\lambda(y)
\end{eqnarray*}
for all  $t\in\RR$. Now, denote by $[c,d]$ the compact interval to which $\eta$ is assumed to belong. Because this interval is strictly included in the domain of $B$, there exists $\eps>0$ such that $[c-\eps,d+\eps]$ is included in the domain of $B$. With $t=\pm\eps/\phi$, using the fact that the density in the exponential family with natural parameter $\eta+\phi t$ integrates to one, we obtain
\begin{eqnarray}\label{eq: momentExpo}
E\left[\exp\{tY\}\right]&=&\exp\left\{\frac{B(\eta+\phi t)-B(\eta) }{\phi}\right\}.
\end{eqnarray}
 Choosing $t=\eps/\phi$ we conclude that
\begin{eqnarray*}
E\left[\exp\{t|Y|\}\right]&\leq &\exp\left\{\frac{B(\eta+\phi t)-B(\eta) }{\phi}\right\}+\exp\left\{\frac{B(\eta-\phi t)-B(\eta) }{\phi}\right\}
\end{eqnarray*}
where the left-hand side is bounded uniformly in $\eta$ since $B$ is continuous on $[c-\eps,d+\eps]$. In the sequel, we denote by $C$ a positive number that is greater than the right-hand side for all $\eta\in[c,d]$. Since
$$E\left[\exp\{t|Y|\}\right]=\sum_{k\geq 0}\frac{t^kE|Y|^k}{k!}\geq \frac{t^sE|Y|^s}{s!}$$
for all $s\geq 0$, we conclude that $E|Y|^s\leq a_0 s! M^{s-2}$ for all integers $s\geq 1$, where $a_0=C/t^2$ and $M=1/t$. This concludes the proof of Proposition \ref{prop: EF1}.\end{proof}

\bibliographystyle{ims}
\bibliography{MonSingInd}

\end{document}